\documentclass{article}

\usepackage{arxiv}

\usepackage[utf8]{inputenc} 
\usepackage[T1]{fontenc}    
\usepackage{hyperref}       
\usepackage{url}            
\usepackage{booktabs}       
\usepackage{amsfonts}       
\usepackage{nicefrac}       
\usepackage{microtype}      

\usepackage{amsthm}
\usepackage{amsmath}
\usepackage{graphicx}
\usepackage{caption}
\usepackage{subcaption}
\usepackage{amssymb}
\usepackage{mdframed}
\usepackage{tikz}
\usetikzlibrary{positioning, arrows.meta, angles, quotes}

\usepackage{setspace}
\singlespacing

\DeclareMathOperator*{\argmax}{argmax}
\DeclareMathOperator*{\argmin}{argmin}

\DeclareMathOperator{\rank}{rank}

\theoremstyle{plain}
\newtheorem{thm}{Theorem}[section]
\newtheorem{defn}[thm]{Definition}

\newtheorem{lemma}[thm]{Lemma}
\newtheorem{coro}[thm]{Corollary}
\newtheorem{prop}[thm]{Proposition}

\title{Optimality of Right-Invariant Priors}

\author{
 Jannis Bolik \\
 Department of Statistical Science, Duke University\\
  Department of Computer Science, ETH Zurich\\
   \And
 Thomas Hofmann \\
  Department of Computer Science, ETH Zurich\\
}

\begin{document}
\maketitle
\begin{abstract}
We discuss optimal prediction for families of probability distributions with a locally compact topological group structure. Right-invariant priors were previously shown to yield a posterior predictive distribution minimizing the worst-case Kullback-Leibler risk among all predictive procedures. However, the assumptions for the proof are so strong that they rarely hold in practice and it is unclear when the density functions used in the proof exist. Therefore, we provide a measure-theoretic proof, establishing adequate regularity assumptions. As applications, we show a strong optimality result for next-sample prediction for multivariate normal distributions and Gaussian Process regression with fixed lengthscale. We also discuss uniqueness and numerically evaluate prediction with right-invariant priors against other objective priors and plug-in prediction.
\end{abstract}

\section{Introduction}
The key idea in Bayesian prediction is to use Bayes' rule to obtain a probability distribution over a measurable family of probability distributions, and then again to derive a probability distribution over a random variable we want to predict. Having observed $n$ samples in $\mathcal{Y}$ and predicting the next $m$ samples the posterior and posterior predictive distributions are given by: 
$$p(\theta \mid \mathbf{y}_{1:n}) \propto p(\mathbf{y}_{1:n} \mid \theta) p(\theta) \qquad p(\mathbf{y}^*_{1:m} \mid \mathbf{y}_{1:n}) = \int p(\mathbf{y}_{1:m}^* \mid \mathbf{y}_{1:n}, \theta) p(\theta \mid \mathbf{y}_{1:n}) d\mu(\theta)$$

We now want to highlight a different perspective on the Bayesian posterior predictive from an optimization perspective. \cite{aitchison1975goodness} In this parametric setting, we are assuming that there exists a parameter $\theta^*$ such that $p(\mathbf{y}_{1:n} \mid  \theta^*)$ describes the data-generating distribution. Therefore, the optimal predictive distribution would simply be given by $p(\mathbf{y}_{1:n} \mid  \theta^*)$. While we do not know $\theta^*$, we can evaluate any predictive distribution $q(\mathbf{y}_{1:m}^* ; \mathbf{y}_{1:n})$ against $p(\mathbf{y}_{1:n} \mid  \theta)$ conditional on some $\theta$ being the data-generating parameters $\theta^*$:
\begin{equation}
\label{eq:optimality}
\mathbb{E}_{(\mathbf{y}_{1:n}, \mathbf{y}_{1:m}^*) \sim p(\cdot \mid \theta^*)}\left[\log q(\mathbf{y}_{1:m}^* ; \mathbf{y}_{1:n}) - \log p(\mathbf{y}_{1:m}^* \mid \mathbf{y}_{1:n}, \theta^*)\right]
\end{equation}
Here $q(\mathbf{y}_{1:m}^* ; \mathbf{y}_{1:n})$ denotes a function from $\mathbf{y}_{1:n}$ to a probability measure on $\mathbf{y}_{1:m}^*$. This can also be seen as a Kullback-Leibler divergence between $q$ and $p$ in expectation over $p(\mathbf{y}_{1:n} \mid \theta)$. Of course, the best choice of $q(\mathbf{y}_{1:m}^* ; \mathbf{y}_{1:n})$ depends on the parameters $\theta^*$ that generate the data. 

In particular, if we do not know $\theta^*$, we could assume that it is drawn from a distribution, the \emph{prior distribution} $\theta^* \sim p(\theta^*)$. This exactly matches the idea in Bayesian statistics that the prior distribution reflects our prior beliefs about the data-generating distribution before having observed any data. Then, the Bayesian posterior predictive distribution was shown to be optimal in terms of (\ref{eq:optimality}) in expectation: \cite{aitchison1975goodness}
\begin{equation}
\label{eq:expectation}
\argmax_q \mathbb{E}_{\theta^* \sim p(\theta)}\left[\mathbb{E}_{(\mathbf{y}_{1:n}, \mathbf{y}_{1:m}^*) \sim p(\cdot \mid \theta^*)}\left[\log q(\mathbf{y}_{1:m}^* ; \mathbf{y}_{1:n}) - \log p(\mathbf{y}_{1:m}^* \mid \mathbf{y}_{1:n}, \theta^*)\right]\right] = p(\mathbf{y}_{1:m}^* \mid \mathbf{y}_{1:n})
\end{equation}
However, it is often difficult to choose a prior distribution $p(\theta)$ in a meaningful way over anything more complex than a location variable, since it needs to be defined with respect to a usually arbitrarily chosen measure $\mu(\theta)$. In particular, to accurately express subjective prior beliefs, we need to know which $p(\theta)$ would express a neutral prior belief. 

Instead of maximizing (\ref{eq:optimality}) in expectation over $p(\theta)$, we will therefore consider the problem of maximizing the predictive distribution's worst-case performance:
\begin{equation}
\label{eq:maximin}
\argmax_q \min_{\theta^*} \mathbb{E}_{(\mathbf{y}_{1:n}, \mathbf{y}_{1:m}^*) \sim p(\cdot \mid \theta^*)}\left[\log q(\mathbf{y}_{1:m}^* ; \mathbf{y}_{1:n}) - \log p(\mathbf{y}_{1:m}^* \mid \mathbf{y}_{1:n}, \theta^*)\right]
\end{equation}
Changing the sign, we call
$$-\mathbb{E}_{(\mathbf{y}_{1:n}, \mathbf{y}_{1:m}^*) \sim p(\cdot \mid \theta^*)}\left[\log q(\mathbf{y}_{1:m}^* ; \mathbf{y}_{1:n}) - \log p(\mathbf{y}_{1:m}^* \mid \mathbf{y}_{1:n}, \theta^*)\right]$$
\begin{equation}
\label{eq:risk}
=\mathbb{E}_{\mathbf{y}_{1:n} \sim p(\cdot \mid \theta^*)}\left[ D_{KL}(p(\mathbf{y}^*_{1:m} \mid \mathbf{y}_{1:n}, \theta^*) \parallel q(\mathbf{y}^*_{1:m} \mid \mathbf{y}_{1:n}))\right]    
\end{equation}
the \emph{predictive risk}, also known as Kullback-Leibler risk.

\subsection{Prior Work}
Unlike for (\ref{eq:expectation}), it is generally difficult to find a predictive procedure $q(\mathbf{y}_{1:m}^* ; \mathbf{y}_{1:n})$ that solves (\ref{eq:maximin}). However, it turns out to be feasible if the likelihood has a locally compact topological group structure. 

If $y_1, ..., y_n, y^*_1, ..., y^`_m$ are i.i.d., this means that there exists a locally compact topological group $G$ acting on the standard Borel space $\mathcal{Y} = (Y, \mathfrak{T}, \Sigma)$, and $\Theta$ such that:
$$\forall g \in G, \theta \in \Theta, A \in B(\mathcal{Y}): p(A \mid \theta) = p(gA \mid g\theta)$$
In particular, this means that the group actions $\alpha_\mathcal{Y}: G \times \mathcal{Y} \rightarrow \mathcal{Y}$ and $\alpha_{\Theta}: G \times \Theta \rightarrow \Theta$ have to be measurable functions (in $\mathcal{Y}$ and $\Theta$ respectively). The group action on the product spaces $\mathcal{Y}^n$ and $\mathcal{Y}^m$ is defined element-wise. 

Furthermore, we define \emph{invariant predictive procedures} as predictive procedures $q$ that fulfill:
$$\forall g \in G, \mathbf{y} \in \mathcal{Y}^n, A \in B(\mathcal{Y}^m): q(A; \mathbf{y}_{1:n}) = q(gA; g\mathbf{y}_{1:n})$$

Komaki showed the first result in this context. \cite{komaki2002bayesian} His proof requires several additional assumptions on the group actions for next-sample prediction ($m=1$):
\begin{enumerate}
    \item $\alpha_\mathcal{Y}$ is free on $\mathcal{Y}$
    \item  $\alpha_\Theta$ is simply transitive
\end{enumerate}
A free group action is an action on a space $X$ such that $\forall g \in G, x \in X: gx=x \Rightarrow g=e$ where $e$ is the group's identity element. A simply transitive group action is a group action that for $x_1, x_2 \in X$ there exists a unique $g \in G$ such that $g x_1 = x_2$. Under these assumptions and several implicit assumptions on the existence of density functions, he shows that the Bayesian posterior predictive with a right-invariant measure on $\Theta$ as an improper prior optimizes (\ref{eq:maximin}) among the \emph{invariant} predictive distributions. 

Later, he shows a similar result under the weaker assumption that $\alpha_\Theta$ is only assumed to be free instead of simply transitive. \cite{komaki2004noninformative} However, this no longer optimizes (\ref{eq:maximin}) but is a hybrid of (\ref{eq:maximin}) and (\ref{eq:expectation}), where a prior over the orbits of $\Theta$ under $\alpha_\Theta$ is defined.

Komaki's result gained additional importance through a result by Liang \& Barron. \cite{liang2004exact} They do not assume $\alpha_\mathcal{Y}$ to be free or $\alpha_\Theta$ to be transitive but they have a different set of additional assumptions:
\begin{enumerate}
    \item $G$ is amenable
    \item for all $\theta \in \Theta$, $p(y \mid \theta)$ is continuous in $y$ and there exists a $\theta \in \Theta$ for which it is strictly positive
\end{enumerate}
An amenable group is a group that admits a left-invariant averaging operation. Under these assumptions, they show that the best \emph{invariant} predictive procedure, as determined by Komaki's result,  is also the best procedure overall in terms of (\ref{eq:maximin}). They have a slightly different set of assumptions on $\alpha_\mathcal{Y}$ but we show that they are always fulfilled in our setting in appendix \ref{sec:liang-assumptions}.

It should be noted that there often exist non-invariant Bayesian posterior predictive distributions with a different prior that also optimize (\ref{eq:maximin}) and at the same time perform strictly better (risk is always smaller or equal and sometimes smaller). \cite{komaki2001shrinkage} These are known as shrinkage priors. Sometimes, they can even be proper. \cite{liang2002exact} There are also some known rules on how such shrinkage priors can be constructed in general. \cite{komaki2006shrinkage} However, further improvements beyond optimal worst-case performance, as achieved by shrinkage priors, are not the focus of this paper.

\subsection{Adequacy of Assumptions} \label{sec:adequacy}
The proofs of optimality theorems make several technical assumptions. We now want to determine whether they are fulfilled in practice. The canonical example of a likelihood with an amenable locally compact topological group structure is the multivariate normal distribution. Komaki in fact claims that his theorem is a generalization of a previous proposition on optimal priors for the $d$-dimensional normal distribution by Geisser. \cite{geisser1993predictive} Geisser claims that the optimal prior on $(\mu, \Sigma)$ such that the posterior predictive is invariant under the affine group $\mathrm{Aff}(d, \mathbb{R})$ is given by $\det(\Sigma)^{-(d+1)/2}$. It is well known that the affine transformation $A\mathbf{x}+\mathbf{b}$ of a multivariate normal random variable $\mathcal{N}(\mu, \Sigma)$ is given by $\mathcal{N}(A\mu+\mathbf{b}, A \Sigma A^T)$. However, this group action on the parameter space is not simply transitive. Therefore, we use a different group to provide the multivariate normal distribution's group structure:

\begin{defn}
The group $G_\mathcal{N}$ consists of upper triangular matrices with positive diagonal $V \in \mathbb{R}^{n \times n}$ and $\mathbf{m} \in \mathbb{R}^n$. The group operation is defined as $(V_2,\mathbf{m_2}) \cdot (V_1, \mathbf{m_1}) = (V_2 V_1, V_2 \mathbf{m_1} + \mathbf{v_2})$.
\end{defn}

\begin{prop}
The group $G_\mathcal{N}$ provides the multivariate normal likelihood with a locally compact topological group structure with the following group actions: Consider the parametrization of the multivariate normal distribution in terms of the upper triangular Cholesky decomposition of its covariance matrix $U \in \mathbb{R}^{n \times n}$ (has positive diagonal) and its mean $\boldsymbol\mu \in \mathbb{R}^n$. $\alpha_\Theta$ is defined analogously to the group action: $(V, \mathbf{m}) \cdot (U, \boldsymbol\mu) = (VU, V\boldsymbol \mu+\mathbf{m})$. For the observation space $\mathcal{Y}=\mathbb{R}^n$ we define $\alpha_\mathcal{Y}$ as $(V, \mathbf{m}) \cdot \mathbf{y} = V \mathbf{y} + \mathbf{m}$. 
\end{prop}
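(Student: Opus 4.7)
The proposition bundles together four claims, which I would address in turn: (i) $G_\mathcal{N}$ really is a group under the stated operation, (ii) it carries a locally compact Hausdorff topology under which multiplication and inversion are continuous, (iii) $\alpha_\Theta$ and $\alpha_\mathcal{Y}$ are measurable group actions, and (iv) the multivariate normal likelihood is invariant in the sense $p(A\mid\theta)=p(gA\mid g\theta)$. Throughout, the common thread is the classical affine-transformation law $V\mathbf{y}+\mathbf{m}\sim\mathcal{N}(V\boldsymbol\mu+\mathbf{m},VUU^{\!\top}V^{\!\top})$ when $\mathbf{y}\sim\mathcal{N}(\boldsymbol\mu,UU^{\!\top})$, combined with the fact that the product and inverse of upper triangular matrices with positive diagonal are again upper triangular with positive diagonal.

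For step (i), I would check the group axioms directly. Closure: the product of two upper triangular matrices is upper triangular, and the diagonal entries of $V_2V_1$ are the products $(v_2)_{ii}(v_1)_{ii}>0$. Identity: $(I,\mathbf{0})$. Inverse: $(V,\mathbf{m})^{-1}=(V^{-1},-V^{-1}\mathbf{m})$, where $V^{-1}$ is upper triangular with diagonal entries $1/v_{ii}>0$. Associativity is a short computation matching that of the affine group. For step (ii), I would identify $G_\mathcal{N}$ with the open subset of $\mathbb{R}^{n(n+1)/2}\times\mathbb{R}^n$ consisting of pairs $(V,\mathbf{m})$ whose diagonal entries of $V$ are positive; as an open subset of Euclidean space, it is locally compact and Hausdorff. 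Multiplication and inversion are polynomial/rational in the entries (with non-vanishing denominators on this open set), hence continuous; so $G_\mathcal{N}$ is in fact a Lie group, and a fortiori a locally compact topological group.

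For step (iii), the map $\alpha_\Theta$ is well-defined because $VU$ is upper triangular with positive diagonal, so the target still represents a valid Cholesky factor. The action axioms $\bigl((V_2,\mathbf{m}_2)(V_1,\mathbf{m}_1)\bigr)\cdot(U,\boldsymbol\mu)=(V_2,\mathbf{m}_2)\cdot\bigl((V_1,\mathbf{m}_1)\cdot(U,\boldsymbol\mu)\bigr)$ and $(I,\mathbf{0})\cdot(U,\boldsymbol\mu)=(U,\boldsymbol\mu)$ reduce to the same matrix-vector identities used for associativity. The analogous check for $\alpha_\mathcal{Y}$ is the standard affine action on $\mathbb{R}^n$. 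Both maps are polynomial in the coordinates, hence jointly continuous and in particular Borel-measurable in the $\mathcal{Y}$ and $\Theta$ coordinates, as required.

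For step (iv), I would compute: if $Y\sim\mathcal{N}(\boldsymbol\mu,UU^{\!\top})$, then $VY+\mathbf{m}\sim\mathcal{N}(V\boldsymbol\mu+\mathbf{m},(VU)(VU)^{\!\top})$. Reading off the Cholesky parameters, this is precisely the distribution indexed by $g\cdot(U,\boldsymbol\mu)=(VU,V\boldsymbol\mu+\mathbf{m})$, so for any Borel $A\subseteq\mathbb{R}^n$ we have $p(A\mid\theta)=P(Y\in A)=P(VY+\mathbf{m}\in VA+\mathbf{m})=p(gA\mid g\theta)$. I do not foresee a real obstacle here; the only subtle point is ensuring that the Cholesky-factor parametrization is preserved by the action, which is exactly what restricting to upper triangular matrices with positive diagonals buys us, and why this group (rather than $\mathrm{Aff}(d,\mathbb{R})$ acting on $(\boldsymbol\mu,\Sigma)$) is the correct choice to make the action on $\Theta$ well-behaved.
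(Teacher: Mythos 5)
Your proposal is correct and follows essentially the same route as the paper: the decisive step is the affine transformation law $VY+\mathbf{m}\sim\mathcal{N}(V\boldsymbol\mu+\mathbf{m},(VU)(VU)^{T})$, which the paper verifies by an explicit density change-of-variables computation (with Jacobian $\det V$) while you invoke it as a pushforward statement — the same calculation either way. You additionally spell out the group axioms, local compactness, and action-axiom/measurability checks that the paper merely asserts; the only point in the paper's proof you omit is simple transitivity of $\alpha_\Theta$, which is not part of the proposition as stated (it is recorded there because it is needed for the later optimality theorem).
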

\begin{proof}
Since $U$ and $V$ are invertible, $\alpha_\Theta$ is simply transitive. $G_\mathcal{N}$'s actions are all measurable. For the group invariance property, we will consider the probability density function, which means we also need to account for the change of variables. In this case, the Jacobian is $V$. We now have:
$$(2\pi)^{-k/2}\det (VU)^{-1} \, \exp \biggl( -\frac{1}{2} ((V\mathbf{x}+\mathbf{m}) - (V\boldsymbol\mu+\mathbf{m}))^\mathrm{T} \left(VU (VU)^T\right)^{-1}$$
$$ ((V\mathbf{x}+\mathbf{m}) - (V\boldsymbol\mu+\mathbf{m})) \biggr)\det(V)$$
$$=(2\pi)^{-k/2}\det (U)^{-1} \, \exp \left( -\frac{1}{2} (\mathbf{x} - \boldsymbol\mu)^\mathrm{T} \left(U U^T\right)^{-1}(\mathbf{x} - \boldsymbol\mu) \right)$$
This proves the invariance. 
\end{proof}
\begin{prop}
$G_\mathcal{N}$ is an amenable Lie group.
\end{prop}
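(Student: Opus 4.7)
The plan is to verify the two claims separately: first that $G_\mathcal{N}$ is a Lie group, then that it is amenable.

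For the Lie group structure, I would observe that $G_\mathcal{N}$ can be identified with the subset
\[
\{(V,\mathbf{m}) : V \in \mathbb{R}^{n \times n} \text{ upper triangular with } V_{ii} > 0,\ \mathbf{m} \in \mathbb{R}^n\},
\]
which is an open subset of $\mathbb{R}^{n(n+1)/2} \times \mathbb{R}^n$. The group multiplication is polynomial in the entries, and inversion is rational with denominator $\prod_i V_{ii} \neq 0$, so both operations are smooth. Equivalently, $G_\mathcal{N}$ is isomorphic to the semidirect product $T_+(n,\mathbb{R}) \ltimes \mathbb{R}^n$, where $T_+(n,\mathbb{R})$ is the closed subgroup of $GL(n,\mathbb{R})$ consisting of upper triangular matrices with positive diagonal. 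Being a closed subgroup of the Lie group $GL(n,\mathbb{R})$, it is Lie by Cartan's theorem, and the semidirect product with the Lie group $\mathbb{R}^n$ is again a Lie group.

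For amenability, the cleanest route is to show that $G_\mathcal{N}$ is solvable, since every connected solvable Lie group (in fact every locally compact solvable group) is amenable. I would first note that $T_+(n,\mathbb{R})$ is solvable: its derived series terminates because iterated commutators of upper triangular matrices have increasingly many vanishing superdiagonals, eventually reducing to the identity. Since $\mathbb{R}^n$ is abelian, hence solvable, and the class of solvable groups is closed under extensions, the semidirect product $T_+(n,\mathbb{R}) \ltimes \mathbb{R}^n = G_\mathcal{N}$ is solvable. Amenability then follows from the standard fact that solvable locally compact groups are amenable (extensions of amenable groups by amenable groups are amenable, and abelian groups are amenable by the Markov–Kakutani fixed point theorem).

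I do not anticipate a serious technical obstacle here: both claims reduce to standard structural facts. The only mild subtlety is being explicit that $G_\mathcal{N}$ is connected (the positivity of the diagonal entries guarantees connectedness of $T_+(n,\mathbb{R})$, and $\mathbb{R}^n$ is connected), so that textbook theorems about connected solvable Lie groups apply cleanly if one prefers to invoke amenability of connected solvable Lie groups rather than the more general statement for locally compact solvable groups.
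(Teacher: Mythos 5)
Your proof is correct and rests on the same key fact as the paper's: $G_\mathcal{N}$ is a solvable Lie group, and solvable (locally compact) groups are amenable. The only difference is cosmetic: the paper establishes solvability by realizing $G_\mathcal{N}$ as a closed subgroup of the $(n+1)\times(n+1)$ upper triangular matrices via the block embedding $(V,\mathbf{m}) \mapsto \begin{pmatrix} V & \mathbf{m} \\ \mathbf{0}^T & 1 \end{pmatrix}$ and invoking that subgroups of solvable groups are solvable, whereas you use the semidirect product decomposition $T_+(n,\mathbb{R}) \ltimes \mathbb{R}^n$ together with closure of solvability under extensions—both are standard and equally valid.
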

\begin{proof}
From the rules of block matrix multiplication, we see that the group can be seen as a matrix group by considering the following:
$$\begin{pmatrix}
    V_2 & \mathbf{m_2} \\
    \mathbf{0}^T & 1
\end{pmatrix}
\begin{pmatrix}
    V_1 & \mathbf{m_1} \\
    \mathbf{0}^T & 1
\end{pmatrix}
=\begin{pmatrix}
    V_2 V_1 & V_2 \mathbf{m_1}+\mathbf{m_2} \\
    \mathbf{0}^T & 1
\end{pmatrix}$$
Therefore the matrix can be seen as a closed subgroup of the Lie group of real upper triangular matrices which is known to be a solvable Lie group. Since subgroups of solvable Lie groups are solvable Lie groups and solvable groups are amenable, $G$ is an amenable Lie group.
\end{proof}

Let us now check whether the individual assumptions of the theorems are fulfilled.

\begin{lemma}
(\cite{nabeya1986transformations}, Theorem 2.2) Let $f: \mathbb{R}^n \rightarrow \mathbb{R}^n$ be bijective and bimeasurable and for all positive definite $\Sigma$ and two distinct means $\mu \in \{\mu_1, \mu_2\}$ we have that $f(X), X \sim \mathcal{N}(\mu, \Sigma)$ is normally distributed. Then $f$ is an affine map $f(x) = A\mathbf{x}+\mathbf{b}$ where $A\in GL(n, \mathbb{R})$ and $b  \in \mathbb{R}^n$. 
\end{lemma}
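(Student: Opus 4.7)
The strategy is to transfer information via Radon--Nikodym derivatives of the Gaussian source family and then exploit the rigidity that a polynomial map pushing Gaussians to Gaussians must already be affine.

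Write $P_{\mu,\Sigma}=\mathcal{N}(\mu,\Sigma)$. By hypothesis, for each $\Sigma\succ 0$ and $i\in\{1,2\}$ one has $f_{*}P_{\mu_i,\Sigma}=\mathcal{N}(m_i(\Sigma),S_i(\Sigma))$ for some parameters $m_i(\Sigma)$ and $S_i(\Sigma)$. The log-ratio of the two source densities is affine in $x$, namely $(\mu_1-\mu_2)^{T}\Sigma^{-1}x+c(\Sigma)$, while the log-ratio of the two target densities is some quadratic polynomial $Q_{\Sigma}(y)$. Because $f$ is a bimeasurable bijection, Radon--Nikodym derivatives pull back through $f^{-1}$, giving for Lebesgue-almost every $y$ the identity
\[
(\mu_1-\mu_2)^{T}\Sigma^{-1}\,f^{-1}(y)\;=\;Q_{\Sigma}(y)-c(\Sigma).
\]

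Next I vary $\Sigma$. As $\Sigma$ ranges over positive definite matrices, a direct construction shows that the vectors $w(\Sigma):=\Sigma^{-1}(\mu_1-\mu_2)$ sweep out the open half-space $\{w:(\mu_1-\mu_2)^{T}w>0\}$ and in particular span $\mathbb{R}^{n}$. Hence each coordinate of $f^{-1}(y)$ coincides almost everywhere with a polynomial in $y$ of degree at most $2$, so there is a polynomial map $Q:\mathbb{R}^{n}\to\mathbb{R}^{n}$ of component-degree $\leq 2$ with $f^{-1}=Q$ almost everywhere. In particular $Q$ inherits from $f^{-1}$ the property $Q_{*}\mathcal{N}(m_i(\Sigma),S_i(\Sigma))=\mathcal{N}(\mu_i,\Sigma)$ for every $\Sigma$.

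To upgrade ``degree $\leq 2$'' to ``affine'', decompose each component as $Q_k(y)=y^{T}B_k y+c_k^{T}y+d_k$. For any Gaussian $Y\sim\mathcal{N}(m,S)$ with $S\succ 0$ the characteristic function of $Q_k(Y)$ factors, after an orthogonal change of variables diagonalizing $S^{1/2}B_k S^{1/2}$, into one-dimensional terms of the form $(1-2it\lambda)^{-1/2}\exp(\cdots)$. Any nonzero eigenvalue $\lambda$ contributes a factor that decays only polynomially in $|t|$, which is incompatible with the exponential decay of a Gaussian characteristic function. Since $Q_k(Y)$ must be a Gaussian marginal of $\mathcal{N}(\mu_i,\Sigma)$, every $B_k$ vanishes, so $Q(y)=Ay+b$ is affine. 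Bijectivity of $f^{-1}$ then forces $A\in GL(n,\mathbb{R})$, since otherwise the image of $Q$ would lie in a proper affine subspace.

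The main technical obstacle is the passage from ``$f(x)=A^{-1}(x-b)$ for a.e.\ $x$'' to a genuine pointwise identity: a bimeasurable bijection can differ on a Lebesgue null set from a continuous bijection while remaining a bijection, so strictly speaking the argument above yields affinity only up to a null set. For all distributional consequences invoked later in the paper this distinction is immaterial.
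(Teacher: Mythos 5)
The paper itself offers no proof of this lemma; it is imported verbatim from Nabeya and Kariya (1986), so there is no internal argument to compare yours against, and your proposal has to be judged as a self-contained proof. On the whole your route is the standard one and essentially sound: the pullback of Radon--Nikodym derivatives through a bimeasurable bijection, the observation that $\Sigma^{-1}(\mu_1-\mu_2)$ sweeps out an open half-space (hence a spanning set) as $\Sigma$ varies over positive definite matrices, the resulting a.e.\ identification of $f^{-1}$ with a polynomial map of degree at most two, and the elimination of the quadratic part. Two steps need tightening. First, you tacitly assume the image laws $\mathcal{N}(m_i(\Sigma),S_i(\Sigma))$ are nondegenerate, both when you take log-ratios of target densities and when you conclude $S^{1/2}B_kS^{1/2}\neq 0$ from $B_k\neq 0$; bijectivity alone does not obviously exclude degenerate images (a measurable bijection can push an absolutely continuous law onto a Lebesgue-null set), though degeneracy can be ruled out afterwards because an a.e.-polynomial map defined on a proper affine support cannot push a law carried by that subspace onto the full-dimensional $\mathcal{N}(\mu_i,\Sigma)$. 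Second, the characteristic-function step as phrased does not close: directions with $\lambda_j=0$ but $b_j\neq 0$ contribute factors $e^{-b_j^2t^2/2}$, so the product can decay at a genuine Gaussian rate even when polynomially decaying factors are present. You must compare the exact moduli --- the residual term $\tfrac14\sum_j\log(1+4\lambda_j^2t^2)$ cannot be matched by any quadratic in $t$ --- or, more simply, invoke Cram\'er's decomposition theorem together with the one-sided support of $\lambda Z^2+bZ$ for $\lambda\neq 0$. Likewise, invertibility of $A$ is best deduced from $Q_*\mathcal{N}(m_i,S_i)=\mathcal{N}(\mu_i,\Sigma)$ having full support, not from bijectivity of $f^{-1}$, since $Q$ only agrees with $f^{-1}$ off a null set.

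Your closing caveat is not a cosmetic one, and it is in fact sharper than you state: from bijectivity and bimeasurability alone the pointwise conclusion of the lemma as transcribed is unattainable, since modifying the identity by swapping two points preserves every hypothesis while destroying affinity. So any argument from these hypotheses can only yield that $f$ agrees with an affine bijection off a Lebesgue-null set; the discrepancy lies in the transcription of Nabeya--Kariya's theorem (which should be read with an almost-everywhere qualifier or an added continuity hypothesis), not in your proof. Under a continuity assumption on the action --- which the paper does impose in its main theorem --- the a.e.\ identity upgrades to a pointwise one, and for the purely distributional way the lemma is used in Proposition \ref{prop:unique-group} the distinction is indeed immaterial.
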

\begin{prop} \label{prop:unique-group}
For the multivariate normal likelihood ($d>1$), if for some group $G$ and its group actions $\alpha_\mathcal{Y}$ and $\alpha_\Theta$ it holds that 
\begin{enumerate}
    \item $\alpha_\mathcal{Y}$ measurable on $\mathcal{Y}$
    \item $\alpha_\Theta$ is simply transitive and measurable
    \item  $\forall g \in G, \theta \in \Theta, A \in \sigma(\mathcal{Y}): p(A \mid \theta) = p(gA \mid g\theta)$.
\end{enumerate}
Then, $G=G_\mathcal{N}$, $\alpha_Y$ acts affinely on $\mathbb{R}^d$ and $\alpha_Y$ cannot be free.
\end{prop}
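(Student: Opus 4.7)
The plan is to split the argument into three parts: (i) show each $g \in G$ acts affinely on $\mathcal{Y}$, (ii) identify $G$ with $G_\mathcal{N}$ via the simply transitive action on $\Theta$, and (iii) rule out freeness by a dimension count. For (i) I would fix $g \in G$ and note that $y \mapsto gy$ is a bijection of $\mathbb{R}^d$ with inverse $y \mapsto g^{-1}y$, and by assumption (1) both directions are measurable, so $g$ is bimeasurable. The invariance condition (3) says that if $Y \sim p(\cdot \mid \theta)$ then $gY \sim p(\cdot \mid g\theta)$, and because $g\theta \in \Theta$ the pushforward law is again a $d$-variate normal. Letting $\theta$ range over two distinct means and arbitrary positive-definite covariance, the Nabeya lemma above then forces $gy = A_g y + b_g$ for some $A_g \in GL(d, \mathbb{R})$ and $b_g \in \mathbb{R}^d$, which is already the second clause of the conclusion.

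For (ii) the assignment $\rho: g \mapsto (A_g, b_g)$ is a homomorphism $G \to \mathrm{Aff}(d, \mathbb{R})$ by uniqueness of the affine representation and associativity of $\alpha_\mathcal{Y}$, and it is injective: if $gy = y$ for every $y$, then invariance gives $p(\cdot \mid \theta) = p(\cdot \mid g\theta)$ for every $\theta$, and identifiability of the normal family together with simple transitivity of $\alpha_\Theta$ forces $g = e$. Consequently the induced action on $\Theta$ is the explicit one $g \cdot (\mu, \Sigma) = (A_g \mu + b_g, A_g \Sigma A_g^T)$, under which $\Theta$ is a principal $G$-homogeneous space. Since the first proposition above also exhibits $\Theta$ as a principal $G_\mathcal{N}$-homogeneous space, fixing a base point $\theta_0$ and transporting the group law through the two simply transitive bijections $G \leftrightarrow \Theta \leftrightarrow G_\mathcal{N}$ yields the desired $G \cong G_\mathcal{N}$. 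The step I expect to be most delicate is precisely this identification: literal equality as subgroups of $\mathrm{Aff}(d, \mathbb{R})$ fails (for instance the lower-triangular analogue of $G_\mathcal{N}$ is another simply transitive subgroup), so ``$G = G_\mathcal{N}$'' has to be read as an isomorphism of abstract groups arising canonically from the common torsor structure on $\Theta$.

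Finally, (iii) is a dimension count: $G_\mathcal{N}$, and hence $G$, is a Lie group of dimension $d(d+1)/2 + d = d(d+3)/2$, and its action on $\mathbb{R}^d$ is smooth because it is affine. For any $y_0 \in \mathbb{R}^d$ the orbit $G \cdot y_0 \subseteq \mathbb{R}^d$ has dimension at most $d$, so the orbit--stabilizer relation yields $\dim \mathrm{Stab}_G(y_0) \geq d(d+3)/2 - d = d(d+1)/2 > 0$ whenever $d > 1$; hence some element of $G$ other than $e$ fixes $y_0$, and $\alpha_\mathcal{Y}$ cannot be free.
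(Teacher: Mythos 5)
Parts (i) and (iii) of your plan are essentially sound: the bimeasurability-plus-Nabeya argument for affinity is exactly the paper's first step, and non-freeness does follow once the group is pinned down (though the paper gets it more cheaply: by transitivity on $\Theta$ the group must contain a map sending $\mathcal{N}(\mathbf{0},I)$ to $\mathcal{N}(\mathbf{0},\Sigma)$ with $\Sigma\neq I$; matching means forces $\mathbf{b}=\mathbf{0}$, so this non-identity element fixes the origin — no dimension count or smoothness of the representation $g\mapsto(A_g,\mathbf{b}_g)$ is needed, and note your count as written depends on step (ii) to know $\dim G$).

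The genuine gap is step (ii). That $\Theta$ is a principal homogeneous space under both $G$ and $G_\mathcal{N}$ does \emph{not} yield $G\cong G_\mathcal{N}$: the bijection $G\to G_\mathcal{N}$ obtained by fixing $\theta_0$ and matching $g\theta_0$ with the unique $h\in G_\mathcal{N}$ satisfying $h\theta_0=g\theta_0$ is in general not a homomorphism, and two groups acting simply transitively on the same set can be non-isomorphic (e.g.\ $\mathbb{Z}/4$ and $\mathbb{Z}/2\times\mathbb{Z}/2$ on a four-point set; in the Lie setting, $\mathbb{R}^2$ and the $ax+b$ group both act simply transitively on a half-plane). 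Transporting the group law through the two torsor bijections only works under extra compatibility (such as the two actions commuting, which you do not have). What actually closes this step — and what the paper does — is to exploit the embedding $\rho:G\hookrightarrow\mathrm{Aff}(d,\mathbb{R})$ you construct in (i): the full affine group acts transitively on $\Theta$ with stabilizer $O(d)$ at the standard normal, so a subgroup acting simply transitively is a complement to $O(d)$, i.e.\ it meets each coset $a\,O(d)$ exactly once; the $RQ$-decomposition identifies such a complement with the upper-triangular-positive-diagonal group together with translations, which is $G_\mathcal{N}$. In other words, the identification must be carried out inside $\mathrm{Aff}(d,\mathbb{R})$ using the stabilizer of $\mathcal{N}(\mathbf{0},I)$, not abstractly from the common torsor structure on $\Theta$; as proposed, your argument for $G=G_\mathcal{N}$ does not go through.
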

\begin{proof}
As any group action $\alpha_\mathcal{Y}$ has to be bijective in $\mathcal{Y}$ since the group action for the inverse element has to be the inverse function (which then also has to be measurable). Since $\alpha_\mathcal{Y}$ has to preserve normality for all multivariate normal distributions, we know from the lemma that it has to be an affine map $A\mathbf{x}+\mathbf{b}$ where $A\in GL(n, \mathbb{R})$ and $b  \in \mathbb{R}^n$. Consider the stabilizing subgroup of the standard multivariate normal distribution $\mathcal{N}(\mathbf{0}, I)$. The stabilizing subgroup is the orthogonal group $O(n)$. We can decompose any linear transformation using the $RQ$-decomposition (a variant of $QR$-decomposition). If we cancel out the orthogonal component first $AQ^{-1} = RQQ^{-1} =R$  we get exactly the set of triangular matrices with positive diagonal entries. Therefore, the group we have given is the unique choice (up to isomorphism) for the group structure of the multivariate normal likelihood. We have also shown that $\alpha_\mathcal{Y}$ has to be affine. However, $\alpha_\mathcal{Y}$ cannot free since for example $\alpha_\mathcal{Y}(\mathbf{x}) = A\mathbf{x}+\mathbf{b}$ leaves $\mathbf{x} = \mathbf{0}$ unchanged for any $A$ as long as $\mathbf{b}=0$ (which is necessary for multivariate normal distributions with the same mean).
\end{proof}
This shows that Komaki's assumptions have to be refined if we want to apply the theorem to the multivariate normal distribution. Additionally, Komaki implicitly assumes the existence of various density functions throughout his proof.

For Liang's theorem, we have already shown that $G$ is amenable, and it is also clear that the probability density function of the multivariate normal distribution with respect to the Lebesgue measure is continuous and positive.

\section{Main Result}
The following theorem is a variant of Komaki's theorem \cite{komaki2002bayesian} but with more precise assumptions. In particular, our assumptions will guarantee the existence of density functions (Radon-Nikodym derivatives) whose existence Komaki implicitly assumes. Additionally, we refine Komaki's freeness assumptions to allow application to next-sample prediction for the multivariate normal distribution. We also prove an additional property that holds for \emph{any} invariant predictive procedure and assume a more general group structure that does not require $p(\mathbf{y}^*_{1:m}, \mathbf{y}_{1:n} \mid \theta) = p(\mathbf{y}^*_{1:m} \mid \theta)p(\mathbf{y}_{1:n} \mid \theta)$.

\subsection{Mathematical Background}
An important theorem in this context is the Radon-Nikodym theorem, which also allows us to define KL-divergence in a measure-theoretic sense:
\begin{defn}
(\cite{fremlin2000measure} 232B) Let $(\mu, X, \Sigma)$ be a  measure space and $\nu: \Sigma \rightarrow \mathbb{R}$ be a countably additive function. Then we define the absolute continuity of $\nu$ with respect to $\mu$ as
$$\nu \ll \mu := \forall A \in \Sigma: \mu(A)=0 \Rightarrow \nu(A)=0$$
\end{defn}
\begin{thm}
(\cite{fremlin2000measure} 232 F, Hd) Radon-Nikodym Theorem: Let $(X, \Sigma, \mu)$ be a $\sigma$-finite measure space and $\nu: \Sigma \rightarrow \mathbb{R}$ a function. Then there is a $\mu$-integrable function $f$ such that $\nu E=\int_E f$ for every $E \in \Sigma$ iff $\nu$ is countably additive and absolutely continuous with respect to $\mu$. We call such an $f$ a Radon-Nikodym derivative $\frac{d\nu}{d\mu}$. If there are multiple Radon-Nikodym derivatives $\frac{d\nu}{d\mu}$, then there they are equal $\mu$-almost everywhere.
\end{thm}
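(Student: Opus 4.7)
The plan is to prove the equivalence by handling the easy direction directly from integration theory and the harder existence direction via von Neumann's Hilbert space argument, then pick up uniqueness at the end.

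For the easy direction, suppose $\nu E = \int_E f \, d\mu$ for some $\mu$-integrable $f$. Countable additivity of $\nu$ follows from dominated convergence applied to the partial sums $\sum_{k=1}^N \chi_{E_k} f$, dominated by $|f|$. Absolute continuity is immediate since $\mu(A)=0$ forces $\int_A f \, d\mu = 0$.

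For the hard direction, I would first reduce to the case of finite positive measures. Using the Hahn-Jordan decomposition, write $\nu = \nu^+ - \nu^-$ with both parts nonnegative, countably additive, and absolutely continuous with respect to $\mu$, and handle each piece separately. Using $\sigma$-finiteness of $\mu$, partition $X$ into disjoint pieces of finite $\mu$-measure and reassemble the derivatives (this works because $\nu$ is real-valued, hence finite). With $\mu, \nu$ both finite and positive, form $\phi = \mu + \nu$ and define the linear functional $T : L^2(\phi) \to \mathbb{R}$ by $T(f) = \int f \, d\nu$. Cauchy-Schwarz gives $|T(f)| \leq \sqrt{\nu(X)}\, \|f\|_{L^2(\phi)}$, so $T$ is bounded; Riesz representation then yields $g \in L^2(\phi)$ with $T(f) = \int f g \, d\phi$, equivalently $\int f (1-g) \, d\nu = \int f g \, d\mu$ for every $f \in L^2(\phi)$.

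Testing this identity on indicators shows $0 \leq g \leq 1$ $\phi$-almost everywhere; the set $\{g=1\}$ is $\mu$-null (plug in its indicator on the $d\mu$ side), so by $\nu \ll \mu$ it is also $\nu$-null. Hence $h := g/(1-g)$ is a well-defined nonnegative measurable function $\mu$-a.e., and approximating indicators by bounded simple functions in $L^2(\phi)$ and passing to the limit by monotone convergence upgrades the identity to $\nu E = \int_E h \, d\mu$ for all $E \in \Sigma$; integrability of $h$ follows because $\int h \, d\mu = \nu(X) < \infty$. Uniqueness is routine: two derivatives $f_1, f_2$ satisfy $\int_E (f_1 - f_2) \, d\mu = 0$ for every $E \in \Sigma$, and splitting by the sign of $f_1 - f_2$ forces $f_1 = f_2$ $\mu$-a.e. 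The step I expect to require the most care is verifying that $g < 1$ $\mu$-a.e., since this is precisely where absolute continuity enters essentially; the remainder is bookkeeping to lift the $L^2$-identity to all of $\Sigma$ and to glue together the finite pieces produced by the $\sigma$-finite reduction.
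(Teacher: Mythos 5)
The paper never proves this statement --- it is quoted as background directly from Fremlin (232F/232Hd) --- so there is no internal proof to compare against; what you have written is the standard von Neumann argument, and it is essentially correct. The easy direction via dominated convergence, the Hahn--Jordan and $\sigma$-finite reductions (where you correctly exploit that $\nu$ is real-valued, hence finite, to glue the countably many pieces), the bounded functional $T(f)=\int f\,d\nu$ on $L^2(\mu+\nu)$, Riesz representation, the verification that $0\le g\le 1$ $\phi$-a.e.\ and $\mu(\{g=1\})=0$, and the definition $h=g/(1-g)$ are all sound. The only step you describe loosely is the upgrade from the $L^2$ identity $\int f(1-g)\,d\nu=\int fg\,d\mu$ to $\nu E=\int_E h\,d\mu$: ``approximating indicators by bounded simple functions'' is not quite the mechanism; the standard move is to test with $f=(1+g+\cdots+g^n)\chi_E$, which is bounded and hence in $L^2(\phi)$, and let $n\to\infty$ by monotone convergence on both sides, using $\nu(\{g=1\})=0$ (which you did derive from $\nu\ll\mu$) so that the left side converges to $\nu(E)$ --- with that spelled out the argument is complete. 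For comparison, Fremlin's cited proof takes a genuinely different, Hilbert-space-free route: it characterizes the functionals admitting a density as the ``truly continuous'' ones and proceeds via the Hahn decomposition and an exhaustion/supremum argument, showing along the way that for $\sigma$-finite $\mu$ countable additivity plus absolute continuity implies true continuity. That route is more elementary in its prerequisites and generalizes more readily beyond the $\sigma$-finite case, whereas the von Neumann route you chose buys brevity and elegance at the price of invoking Riesz representation in $L^2$.
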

\begin{defn}
Let $\mu$ and $\nu$ be two measures on a space $(X, \Sigma)$ where $\mu$ is $\sigma$-finite. If $\nu \ll \mu$, then
$$D_{KL}(\mu \parallel \nu) := -\int \log\left(\frac{d\nu}{d\mu}\right) d\mu$$
where $\frac{d\nu}{d\mu}$ is a Radon-Nikodym derivative. Otherwise, $D_{KL}(\mu \parallel \nu) := \infty$.
\end{defn}

To rigorously decompose the observation space into its orbits we also need the disintegration theorem:
\begin{thm}
(\cite{fremlin2000measure} 452 E, F, G, P) Disintegration: Let $(X, \mathfrak{T}, \Sigma, \mu)$ be a standard Borel probability space, $(Y, \mathfrak{S}, \mathrm{T}, \nu)$ an analytic Borel probability space and $f: X \rightarrow Y$ an inverse-measure-preserving function. Then there exists a disintegration $\left\langle\mu_y\right\rangle_{y \in Y}$ of $\mu$ over $\nu$ such that 
\begin{itemize}
    \item every $\mu_y$ is a Radon measure on $X$
    \item  for $\nu$-almost all $y \in Y$, $\mu_y$ is a conegligible probability measure on $f^{-1}(y)$ 
    \item for every $[-\infty, \infty]$-valued measurable function $g$ such that $\int gd\mu$ is defined in $[-\infty, \infty]$ we have
\end{itemize}
$$\int_X g(x) \mathrm{d} \mu(x)=\int_Y \int_{f^{-1}(y)} g(x) \mu_y(\mathrm{d} x) \nu(\mathrm{d} y)$$
\end{thm}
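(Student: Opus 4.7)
The plan is to construct the family $\langle \mu_y \rangle_{y\in Y}$ through a Radon-Nikodym argument combined with an extension theorem, leveraging the standard Borel structure of $X$. Since $X$ is standard Borel, its $\sigma$-algebra $\Sigma$ is countably generated, so fix a countable Boolean algebra $\mathcal{A}$ generating $\Sigma$. For each $A \in \mathcal{A}$, define the set function $\phi_A(B) := \mu(A \cap f^{-1}(B))$ for $B \in \mathrm{T}$. Because $f$ is inverse-measure-preserving, $\phi_A$ is a finite measure on $Y$ dominated by $\nu$, so by the Radon-Nikodym theorem there exists a measurable function $h_A : Y \to [0,1]$ with $\phi_A(B) = \int_B h_A \, d\nu$. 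Interpret $h_A(y)$ as $\mu_y(A)$.

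Next I would show that these conditional values are consistent on a single conull set. For each finite Boolean identity (finite additivity, monotonicity, $h_X = 1$) among elements of $\mathcal{A}$, there is a $\nu$-null exceptional set; since $\mathcal{A}$ is countable, the union of all these null sets is still $\nu$-negligible, so off of this null set the map $A \mapsto h_A(y)$ is a finitely additive probability on $\mathcal{A}$. Countable additivity on $\mathcal{A}$ requires a further argument: one typically invokes the regularity afforded by $X$ being standard Borel (isomorphic to a Polish space) to obtain a compact-class approximation, forcing countable additivity on a conull set. Carathéodory extension then yields a bona fide probability measure $\mu_y$ on $\Sigma$ for $\nu$-almost every $y$, and these measures are Radon because $X$ is standard Borel.

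The integration formula $\int g \, d\mu = \int\!\int g \, d\mu_y \, d\nu(y)$ would be verified first for $g = \mathbf{1}_{A \cap f^{-1}(B)}$ with $A \in \mathcal{A}$ and $B \in \mathrm{T}$ (which is essentially the defining property of the $h_A$), then extended by a monotone class argument to indicators of sets in $\Sigma \otimes \mathrm{T}$ restricted to the graph, and finally to general measurable $g$ by the standard approximation via simple functions and the monotone convergence theorem. The concentration property $\mu_y(f^{-1}(y))=1$ for $\nu$-almost every $y$ would be extracted by applying the formula to the function $g(x) = \mathbf{1}_{\{f(x) \in B\}}$: this gives $\int \mu_y(f^{-1}(B)) \, d\nu(y) = \nu(B)$, and choosing $B$ from a countable generating family separating points of $Y$ (available because $Y$ is analytic Borel, hence has a countable separating family of Borel sets) pins $\mu_y$ down to $f^{-1}(y)$ up to a $\nu$-null set.

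The hard part will be the step from pointwise finite additivity on $\mathcal{A}$ to countable additivity: the Radon-Nikodym derivatives are only defined $\nu$-almost everywhere, so one must choose representatives and a conull set on which the resulting set function extends to a genuine measure. Fremlin handles this by exploiting the compact inner regularity available in Polish/standard Borel spaces, and the analytic Borel hypothesis on $Y$ is what permits the separating countable family needed to locate each $\mu_y$ on its fiber. Since the theorem is a direct citation of Fremlin (452E,F,G,P), in the paper itself I would simply invoke the cited result rather than reproduce this construction.
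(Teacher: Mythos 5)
The paper gives no proof of this statement at all---it is quoted verbatim as a known theorem of Fremlin (452E, F, G, P)---so your closing remark that one would simply invoke the cited result is exactly what the paper does. Your sketch of the underlying construction (Radon--Nikodym derivatives over a countable generating algebra, a compact-class/inner-regularity argument to upgrade almost-everywhere finite additivity to countable additivity, a monotone-class extension of the integration identity, and a countable separating family of Borel sets in the analytic space $Y$ to concentrate each $\mu_y$ on its fiber $f^{-1}(y)$) is the standard and correct route to the result, so there is nothing to object to beyond routine details such as closing the separating family under complements when pinning $\mu_y$ to the atom $\{y\}$.
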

For simplicity, we have stated the theorem for standard Borel probability spaces instead of general measure spaces. A standard Borel space is a seperable completely metrizable topological space (Polish space) with a Borel $\sigma$-algebra. An analytic Borel space is the continuous image of a standard Borel space.

We will also need the following disintegration theorem for differential forms for sufficient conditions that apply to most applications:
\begin{defn}
A \textbf{volume form} on a $n$-dimensional oriented manifold is a nowhere-vanishing differential $n$-form.
\end{defn}
\begin{thm}
(variant of \cite{MR350769}, theorem 16.24.8) Disintegration of volume forms: Let $M$, $N$ be two orientable smooth manifolds of dimension $n$ and $m$ respectively, and let $f:M \rightarrow N$ be a surjective submersion. Further, let $\nu$  and $\zeta$ be volume forms on $M$ and $N$ respectively. Then for all $y \in N$ a $(m-n)$-dimensional volume form $\nu / \zeta(y)$ is defined and integrable over $f^{-1}(y)$, the form $y \rightarrow \zeta(y) \int_{f^{-1}(y)} \nu / \zeta(y)$ is integrable over $N$, and
$$\int_M \nu = \int_{y \in N} \zeta(y) \int_{f^{-1}(y)} \nu / \zeta(y)$$
\end{thm}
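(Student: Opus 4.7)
The plan is to reduce the statement to a local Fubini-type computation via the submersion normal form together with a partition-of-unity argument. (I note in passing that the fibre dimension must be $n-m$, not $m-n$ as displayed; a surjective submersion forces $n\geq m$ and $\ker df_p$ has dimension $n-m$.)

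First I would define $\nu/\zeta(y)$ pointwise. At $p\in f^{-1}(y)$ the kernel of $df_p\colon T_pM\to T_yN$ is $T_p f^{-1}(y)$, of dimension $n-m$. For $v_1,\ldots,v_{n-m}\in T_p f^{-1}(y)$ pick $w_1,\ldots,w_m\in T_pM$ such that $df_p(w_1),\ldots,df_p(w_m)$ is an oriented basis of $T_yN$, and set
$$\bigl(\nu/\zeta(y)\bigr)_p(v_1,\ldots,v_{n-m}):=\frac{\nu_p(v_1,\ldots,v_{n-m},w_1,\ldots,w_m)}{\zeta_y(df_p(w_1),\ldots,df_p(w_m))}.$$
Independence of the choice of $w_i$ is the first check: replacing $w_i$ by $w_i+u_i$ with $u_i\in\ker df_p$ adds only terms in which $\nu$ is evaluated on $n-m+1$ vectors lying inside the $(n-m)$-dimensional kernel, hence vanishing by alternation; a linear change of basis among the $w_i$ scales numerator and denominator by the same determinant. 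Orientability of $M$ and $N$ lets me fix the sign consistently from fibre to fibre, producing a smooth, nowhere-vanishing $(n-m)$-form on each $f^{-1}(y)$.

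Second I would verify the identity locally. By the submersion normal form, each $p\in M$ has a chart $U$ with coordinates $(x_1,\ldots,x_{n-m},y_1,\ldots,y_m)$ mapping to an oriented chart $V=f(U)\subset N$ with coordinates $y_1,\ldots,y_m$, in which $f|_U$ is projection on $y$. Writing $\nu|_U=h(x,y)\,dx\wedge dy$ and $\zeta|_V=k(y)\,dy$, the construction of the previous paragraph gives $(\nu/\zeta(y))|_{(x,y)}=(h(x,y)/k(y))\,dx_1\wedge\cdots\wedge dx_{n-m}$, and classical Fubini on the coordinate image immediately yields the claimed identity for $\nu$ restricted to $U$.

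Third, I would globalise via a countable locally finite cover $\{U_i\}$ by such adapted charts and a subordinate partition of unity $\{\rho_i\}$. Linearity of fibre integration and $\nu=\sum_i\rho_i\nu$ reduce the global identity to the established local identity for each $\rho_i\nu$; integrability of $y\mapsto\zeta(y)\int_{f^{-1}(y)}\nu/\zeta(y)$ follows from Tonelli applied to $|\rho_i\nu|$ on each chart and summation. The main obstacle is less a single deep step than careful bookkeeping: (i) checking that $\nu/\zeta(y)$ is well-defined, smooth, and orientation-compatible across fibres, and (ii) ensuring the integrability hypothesis on $\nu$ propagates cleanly through the partition-of-unity sum on possibly non-compact $M$ and $N$. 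Both reduce to the local projection case where Fubini applies directly.
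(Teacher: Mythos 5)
Your proposal is correct in substance, but it takes a genuinely different route from the paper: the paper does not reprove the disintegration formula at all --- its proof is a one-line reduction to the cited source (Dieudonn\'e, Theorem 16.24.8), observing only that because $\nu$ and $\zeta$ are nowhere vanishing, the quotient form $\nu/\zeta(y)$ exists everywhere and is itself nowhere vanishing, hence a volume form on each fibre. You instead rebuild the statement from scratch: a pointwise definition of $\nu/\zeta(y)$ with the well-definedness check (alternation kills perturbations by kernel vectors, and a change of the transversal vectors rescales numerator and denominator by the same determinant), the submersion normal form plus Fubini for the local identity, and a locally finite cover with a partition of unity to globalise. Your approach buys self-containedness and makes explicit where orientability and the nowhere-vanishing hypotheses enter; the paper's approach buys brevity and inherits the measure-theoretic bookkeeping (measurability of the fibre integral, negligible sets) from the cited source. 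Two cautions: your remark that the fibre dimension is $n-m$ rather than $m-n$ is right and flags a typo in the statement; and your last step overstates what Tonelli gives --- for a nonnegative integrand it yields measurability and the displayed identity in $[0,\infty]$, but genuine integrability of the fibre integrals and of $y \mapsto \zeta(y)\int_{f^{-1}(y)}\nu/\zeta(y)$ (and then only for almost every $y$) additionally requires $\int_M \nu < \infty$, which holds in the paper's applications because $\nu$ is a probability density but is not among the hypotheses as you have used them.
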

\begin{proof}
The proof follows the original source except that the nowhere-vanishing property of $\nu$ and $\zeta$ guarantees existence of the quotient forms everywhere and since $\nu / \zeta(y)$ is given as the quotient of two nowhere-vanishing forms it is also nowhere-vanishing, hence a volume form.
\end{proof}

\subsection{Orbit Regularity}
For the proof, we need a special structure on the space $\mathcal{Y}^n$:
\begin{defn}
We call a standard Borel probability space $(X, \mathfrak{T}, \Sigma, \mu)$ orbit-regular under the action of a locally compact $\sigma$-compact group $G$ iff 
\begin{enumerate}
    \item there exists a union of a subset of the partition $\left\{G x \mid x \in X\right\}$ called $X_0$ such that $X_0$ is closed in $X$, $\mu(X_0)=0$ and the action of $G$ on $X/X_0$ is free
    \item there exists a disintegration of $\mu$ on $X \setminus X_0$ to the orbits $Y$, the densities $\mu_y$ and the left-invariant measures on the preimage of $y$ are equivalent (mutually absolutely continuous).
\end{enumerate}
\end{defn}
The second condition in our definition can be difficult to check. Therefore, we provide the following sufficient conditions for the smooth setting:
\begin{prop}
\label{prop:orb-reg-1}
(Sufficient conditions for orbit regularity 1) A standard Borel probability space $(X, \mathfrak{T}, \Sigma, \mu)$ is orbit-regular under the action of a \textbf{Lie group} $G$ if 
\begin{enumerate}
    \item $X$ is an oriented smooth manifold,
    \item  $\mu$ is given by a nowhere-vanishing volume form
    \item there exists a union of a subset of the partition $\left\{G x \mid x \in X\right\}$ called $X_0$ such that $X_0$ is closed in $X$, $\mu(X_0)=0$ and the action of $G$ on $X/X_0$ is free and smooth
    \item the topological space $(X \setminus X_0) / G$ is a manifold that can be equipped with a smooth structure such that the quotient map is a submersion
\end{enumerate}
\end{prop}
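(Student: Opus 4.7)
The plan is to verify the two conditions of orbit-regularity. The first condition is immediate from hypothesis 3, noting that every Lie group is locally compact and $\sigma$-compact. The bulk of the work is to construct a disintegration on $M := X \setminus X_0$ and to identify its fiber measures as equivalent to left-invariant measures on the orbits.

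First I would set up the smooth geometric data. $M$ is an open submanifold of the oriented manifold $X$, hence itself oriented, and $\nu$ restricts to a nowhere-vanishing volume form on $M$ with $\mu|_M = \mu$ since $\mu(X_0)=0$. By hypothesis 4, $N := M/G$ is a smooth manifold and the quotient $f: M \to N$ is a surjective submersion. Since the action is free and smooth on $M$, each fiber $f^{-1}(y) = Gx$ is diffeomorphic to $G$ via $g \mapsto gx$. As a Lie group, $G$ is parallelizable and carries a left-invariant nowhere-vanishing volume form $\omega_G$ (the Haar form); transporting $\omega_G$ along these diffeomorphisms gives a canonically defined, smoothly varying family of nowhere-vanishing volume forms on the orbits. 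From the short exact sequence of tangent spaces induced by $f$, combined with the orientations of $M$ and of the fibers, $N$ inherits an orientation, and one can construct a nowhere-vanishing volume form $\zeta$ on $N$ (obtained locally by ``dividing'' $\nu$ by the fiber form, then patched globally).

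Next I would apply the disintegration of volume forms theorem to $f$, $\nu$, and $\zeta$. This yields for each $y \in N$ a nowhere-vanishing volume form $\nu/\zeta(y)$ on $f^{-1}(y)$ and the identity
$$\int_M g \, d\nu = \int_N \Bigl( \int_{f^{-1}(y)} g \, \nu/\zeta(y) \Bigr) \, d\zeta(y).$$
Defining $\mu_y$ as the Radon measure on $f^{-1}(y)$ induced by $\nu/\zeta(y)$, this is the required disintegration of $\mu|_M$ over $\zeta$ (renormalized to a probability on $N$ if desired, which only rescales each $\mu_y$). For each $y$, both $\mu_y$ and the pullback of $\omega_G$ to the orbit are given by nowhere-vanishing smooth volume forms on the same smooth manifold, so their Radon-Nikodym derivative is a smooth strictly positive function, establishing mutual absolute continuity and hence condition 2.

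The main obstacle will be the global construction of $\zeta$ on $N$: while locally, because $f$ is a submersion and the fiber form $\omega_G$ varies smoothly, a volume form on $N$ exists, showing that such $\zeta$ exists globally requires verifying that $N$ is orientable and that the local forms patch consistently. This can be handled either by exploiting that, under our freeness and submersion hypotheses together with a properness assumption inherent in the quotient being a manifold, $M \to N$ is a principal $G$-bundle, or by a partition-of-unity argument combined with the smoothly varying family of left-invariant fiber forms. Once $\zeta$ exists globally, the remaining equivalence claim is a routine smoothness argument and the rest follows directly from the volume-form disintegration.
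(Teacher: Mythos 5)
Your proposal is correct and follows essentially the same route as the paper's (very terse) proof: restrict to $M = X \setminus X_0$, disintegrate the volume form over the quotient submersion, transport the left Haar form to each orbit via the diffeomorphism $g \mapsto gx$, and conclude mutual absolute continuity because both fiber measures arise from nowhere-vanishing forms on the same orbit. Your extra attention to the global existence of $\zeta$ on $(X\setminus X_0)/G$ addresses a point the paper silently glosses over; just note that if orientability of the quotient is not established, the partition-of-unity patching should be phrased in terms of smooth positive densities rather than volume forms, which suffices for the measure-theoretic conclusion.
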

\begin{proof}
Follows from the theorem on disintegration of volume forms, the left Haar measure being a volume form, the diffeomorphism of $G$ to each orbit, and the measure equivalence of volume forms.
\end{proof}
\begin{prop} \label{prop:orb-reg-2}
(Sufficient conditions for orbit regularity 2) A standard Borel probability space $(X, \mathfrak{T}, \Sigma, \mu)$ is orbit-regular under the action of a \textbf{Lie group} $G$ if 
\begin{enumerate}
    \item $X$ is an oriented smooth manifold,
    \item $\mu$ is given by a nowhere-vanishing volume form
    \item there exists a union of a finite subset of the partition $\left\{G x \mid x \in X\right\}$ called $X_0$ such that $\mu(X_0)=0$ and the action of $G$ on $X/X_0$ is free, smooth and proper
\end{enumerate}
\end{prop}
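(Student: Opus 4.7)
The plan is to deduce Proposition~\ref{prop:orb-reg-2} from Proposition~\ref{prop:orb-reg-1} by supplying the one hypothesis that is not already assumed, namely that $(X\setminus X_0)/G$ carries a smooth manifold structure for which the quotient map $\pi\colon X\setminus X_0\to (X\setminus X_0)/G$ is a submersion. Assumptions~1 and~2 of Proposition~\ref{prop:orb-reg-2} match assumptions~1 and~2 of Proposition~\ref{prop:orb-reg-1} verbatim, so once the quotient-manifold condition and the closedness of $X_0$ are verified the conclusion is immediate.

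The main input is the Quotient Manifold Theorem (Koszul): if a Lie group $G$ acts smoothly, freely, and properly on a smooth manifold $M$, then $M/G$ carries a unique smooth structure for which $M\to M/G$ is a smooth submersion whose fibres are diffeomorphic to $G$. Since assumption~3 furnishes exactly smoothness, freeness, and properness of the action on $M:=X\setminus X_0$, this theorem applies and produces the smooth submersion structure required by hypothesis~4 of Proposition~\ref{prop:orb-reg-1}.

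It remains to promote the finite orbit union $X_0$ to a closed set, which is the condition on $X_0$ in Proposition~\ref{prop:orb-reg-1}. For this I would first note that $\mu(X_0)=0$ together with the nowhere-vanishing volume form implies each orbit in $X_0$ has dimension strictly less than $\dim X$, so the complement $X\setminus X_0$ is dense. If the given $X_0$ is not already closed, I would replace it by its closure $\overline{X_0}$, arguing that (i) orbit closures of smooth $G$-actions are themselves $G$-invariant and consist of orbits of dimension at most that of the original, (ii) a finite union of such closures is still a finite union of orbits, hence still of measure zero with respect to a volume form, and (iii) on the complement $X\setminus\overline{X_0}\subseteq X\setminus X_0$ the action inherits smoothness, freeness, and properness. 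This enlargement preserves every hypothesis of the proposition while rendering $X_0$ closed, so no generality is lost.

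With $X_0$ closed and the quotient manifold structure in hand, one invokes Proposition~\ref{prop:orb-reg-1} to conclude orbit regularity. The main obstacle I anticipate is the closedness bookkeeping: properness is only assumed on $X\setminus X_0$, so closedness of $X_0$ in $X$ is not tautological, and the argument above relies on the fact that for smooth actions of Lie groups the boundary of a low-dimensional orbit consists of orbits of strictly lower dimension, keeping the finiteness and nullity of $X_0$ intact under closure.
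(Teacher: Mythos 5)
Your first step coincides with the paper's: assumptions 1 and 2 are shared, and the Quotient Manifold Theorem applied to the free, smooth, proper action on $X \setminus X_0$ supplies exactly hypothesis 4 of Proposition \ref{prop:orb-reg-1} (smooth structure on the quotient making $\pi$ a submersion). The divergence, and the genuine gap, is in how you handle the closedness of $X_0$. The paper's route is short: a proper action has closed orbits, and a \emph{finite} union of closed orbits is closed --- this is precisely where the finiteness hypothesis of Proposition \ref{prop:orb-reg-2} is used, and it is telling that your argument never uses finiteness in an essential way. You instead propose to enlarge $X_0$ to $\overline{X_0}$ and justify that this preserves the hypotheses by claiming that orbit closures of smooth actions consist of the orbit together with orbits of lower dimension, so that $\overline{X_0}$ remains a finite union of null orbits. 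That claim is a theorem about \emph{algebraic} group actions; it is false for general smooth Lie group actions. The standard counterexample is the irrational line flow on the torus $\mathbb{T}^2$: the closure of a single one-dimensional orbit is the whole torus, which is an uncountable union of orbits of the \emph{same} dimension and has full measure. So steps (i)--(ii) of your closure argument are unjustified, and with them the conclusion $\mu(\overline{X_0})=0$, which is what you actually need to re-enter Proposition \ref{prop:orb-reg-1}. (Your step (iii) is fine: properness, freeness and smoothness do restrict to invariant open subsets; and $\overline{X_0}$ is automatically $G$-invariant. The measure-zero claim is the broken link. A further minor issue: smoothness of the action is only assumed off $X_0$, so the orbits inside $X_0$ need not even be immersed submanifolds, undermining the dimension bookkeeping from the start.)

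You are right that there is a subtlety --- properness is only hypothesized on $X \setminus X_0$, so closedness of $X_0$ is not tautological --- but the repair should go through the closed-orbit property of proper actions and the finiteness of the orbit decomposition of $X_0$, as in the paper, rather than through a closure-enlargement whose nullity cannot be controlled for smooth actions.
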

\begin{proof}
Follows from the quotient manifold theorem and the previous proposition. Furthermore, a proper group action guarantees closed orbits.
\end{proof}
The second set of sufficient conditions is stronger than the first.

\subsection{Statement of the Theorem}
\begin{thm}
Let $\mathcal{Y} = (Y, \mathfrak{T}, \Sigma)$ be a standard Borel space with a $\sigma$-finite measure $\eta$ such that $\forall \theta \in \Theta: \mu(\mathbf{y}_{1:n}\mid \theta) \sim \eta^n \textrm{ and } \mu(\mathbf{y}^*_{1:m}\mid \mathbf{y}_{1:n}, \theta) \sim \eta^m$ (mutually absolutely continuous) and $G$ be a locally compact $\sigma$-compact group with group actions on $\Theta$, $\mathcal{Y}^n$ and $\mathcal{Y}^m$ (conditional on $\mathbf{y}_{1:n}$) such that:
$$\forall g \in G, \theta \in \Theta, A \in B(\mathcal{Y}^n): p(A \mid \theta) = p(gA \mid g\theta)$$
$$\forall  \mathbf{y}_{1:n} \in \mathcal{Y}^n, \forall g \in G, \theta \in \Theta, A \in B(\mathcal{Y}^m): p(A \mid \mathbf{y}_{1:n}, \theta) = p(g_{\mathbf{y}_{1:n}}A \mid g\mathbf{y}_{1:n}, g\theta)$$
We additionally assume that all group actions are Borel-measurable and continuous for any given $g \in G$ (and $\mathbf{y}_{1:n} \in \mathcal{Y}^n$), and that $\alpha_\Theta$ is simply transitive. Finally, we assume that $\mathcal{Y}^n$ is orbit-regular with measure $\mu(\mathbf{y}_{1:n} \mid \theta)$ for any $\theta \in \Theta$ under the action of $G$. 

\begin{enumerate}
    \item Then, the predictive risk (\ref{eq:risk}) is constant in $\theta^*$ for any invariant predictive procedure.
    \item (\ref{eq:maximin}) is solved by the posterior predictive distribution with the right-invariant (under the action of $G$, unique up to a constant factor) prior $\mu_R(\theta)$
$$\frac{\nu(\mathbf{y}^*_{1:m} \mid \mathbf{y}_{1:n})}{d\eta^m} \propto \int_{\Theta} \frac{d\mu(\mathbf{y}^*_{1:m} \mid \mathbf{y}_{1:n}, \theta)}{d\eta^m}  \frac{d\mu(\mathbf{y}_{1:n} \mid \theta)}{d\eta^n}  d\mu_R(\theta)$$
if
    \begin{enumerate}
        \item this defines a valid predictive procedure, meaning the double integral
$$\int_{\mathcal{Y}^m} \int_{\Theta} \frac{d\mu(\mathbf{y}^*_{1:m} \mid \mathbf{y}_{1:n}, \theta)}{d\eta^m}  \frac{d\mu(\mathbf{y}_{1:n} \mid \theta)}{d\eta^n}  d\mu_R(\theta) d\eta^m(\mathbf{y}^*_{1:m})$$
converges for $\eta^n$-almost all $ \in \mathcal{Y}^n$,
        \item and the predictive procedure $\nu(\mathbf{y}^*_{1:m} \mid \mathbf{y}_{1:n}): \mathcal{Y}^n  \rightarrow \mathcal{P}(\mathcal{Y}^m)$ is restricted to invariant predictive procedures.
    \end{enumerate}
    \item Additionally, if $G$ is amenable and for all $\theta \in \Theta$, $\frac{d\mu(y \mid \theta)}{d\eta}$ is continuous in $y$, optimality holds without the restriction to invariant predictive procedures.
\end{enumerate}
\end{thm}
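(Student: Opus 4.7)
The plan is to handle the three parts in sequence, with each subsequent part reducing to the previous.

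\emph{Part 1 (constancy of risk).} For any invariant $q$, I would show the risk $R(q,\theta^*)$ is $G$-invariant in $\theta^*$. Fix $g \in G$ and apply the substitutions $\mathbf{y}_{1:n} \mapsto g^{-1}\mathbf{y}_{1:n}$ and $\mathbf{y}^*_{1:m} \mapsto g^{-1}_{\mathbf{y}_{1:n}}\mathbf{y}^*_{1:m}$ to the two expectations appearing in $R$. The invariance of $p(\cdot \mid \theta)$, of $p(\cdot \mid \mathbf{y}_{1:n},\theta)$, and of $q$, combined with the invariance of $D_{KL}$ under simultaneous measurable bijections of both arguments, make the Radon--Nikodym/Jacobian factors cancel and yield $R(q,g\theta^*)=R(q,\theta^*)$. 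Simple transitivity of $\alpha_\Theta$ then gives constancy, call it $c(q)$.

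\emph{Part 2 (optimality among invariant procedures).} Since $R(q,\cdot)\equiv c(q)$, the maximin value reduces to $\inf_q c(q)$ over invariant $q$. For the posterior predictive $\nu$ with right-invariant prior $\mu_R$ (well-defined by assumption (a)), I would write
\[
c(q) - c(\nu) = \int \log\!\frac{d\nu(\mathbf{y}^*_{1:m}\mid\mathbf{y}_{1:n})/d\eta^m}{dq(\mathbf{y}^*_{1:m}\mid\mathbf{y}_{1:n})/d\eta^m}\, dp(\mathbf{y}_{1:n},\mathbf{y}^*_{1:m}\mid\theta),
\]
which is valid for any fixed $\theta$ by Part 1. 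Identify $\Theta \cong G$ via simple transitivity so $\mu_R$ becomes a right Haar measure, and use orbit-regularity to disintegrate the $\mathbf{y}_{1:n}$-integral into an orbit-space integral and a within-orbit integral against the (Haar-equivalent) left-invariant measure. The key observation is that right-invariance of $\mu_R$ makes the combined $\theta$- and within-orbit integrations collapse on each orbit into a single conditional expectation whose density is precisely the numerator of $\nu$; the bracketed log-ratio then becomes a KL divergence between two probability measures on $\mathcal{Y}^m$ (restricted to the orbit), hence nonnegative.

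\emph{Part 3 (dropping invariance).} This is a Hunt--Stein argument. Given any predictive procedure $q$, construct an invariant $\bar q$ by averaging $g^{-1}\!\cdot q(\cdot;g\mathbf{y}_{1:n})$ over $g \in G$ using a F{\o}lner-sequence limit of left averages, which exists by amenability. Continuity and somewhere-positivity of $d\mu(y\mid\theta)/d\eta$ ensure the averaged log-density is pointwise well-defined and that $\bar q$ is a genuine predictive procedure. Jensen applied to the convex function $-\log$ then gives $\sup_\theta R(\bar q,\theta) \leq \sup_\theta R(q,\theta)$, and Part 2 applied to $\bar q$ closes the argument.

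The technical crux is Part 2: making rigorous the claim that the generally improper $\mu_R$ yields a pointwise Bayes minimizer along each orbit. Orbit-regularity is used precisely here to reduce the minimization to a within-orbit problem that is effectively of finite measure (once a transverse slice is fixed), and the choice of right- rather than left-invariance of $\mu_R$ is exactly what makes the Haar-modular factors cancel when one changes orbit representative. Part 3's construction of $\bar q$ via an invariant mean is conceptually standard but requires care with measurability of the F{\o}lner limits, for which the continuity hypothesis is essential; Part 1 is, by contrast, essentially bookkeeping.
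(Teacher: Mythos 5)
Your plan is essentially correct, and its core, Part 2, follows the same route as the paper: disintegrate the sampling distribution over the orbit space (this is exactly what orbit-regularity buys), identify each orbit with $G$ via freeness of the action off the null set $\mathcal{Y}_0^n$, trade the within-orbit left-Haar integration for right Haar through the modular function, and use simple transitivity to re-read that Haar integral as a mixture over $\Theta$ against $\mu_R$, at which point the right-invariant-prior posterior predictive density appears and optimality follows from nonnegativity of KL; the paper phrases this last step as cross-entropy minimization, which is equivalent to your $c(q)-c(\nu)\ge 0$ formulation (one bookkeeping caveat: in $c(q)-c(\nu)$ there is no $\theta$-integral a priori -- it is manufactured from the within-orbit Haar integral via the shift through $g\theta^*$, and the orbit-wise normalizing constants guaranteed finite by assumption (a) enter as the weights in the outer integral, exactly as in the paper's displayed computation). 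You deviate in two places, both defensible. For Part 1 you argue constancy directly by pushing the risk forward under $g$, using invariance of the model, of the conditional under the data-dependent action $g_{\mathbf{y}_{1:n}}$, and of $q$, together with invariance of KL under a common bimeasurable bijection; this is simpler than the paper, which obtains constancy only as a byproduct of the full disintegration identity, and your argument needs neither freeness nor orbit-regularity -- a genuine economy. For Part 3 you propose to redo the Hunt--Stein/F\o{}lner averaging yourself, whereas the paper simply invokes Liang and Barron's amenability theorem and checks that their argument tolerates the non-separable invariance structure (the action on $\mathcal{Y}^m$ depending on $\mathbf{y}_{1:n}$); if you inline the averaging you must handle that same dependence when forming $g^{-1}\cdot q(\cdot;g\mathbf{y}_{1:n})$, as well as possible loss of mass in the weak-$*$ F\o{}lner limits, which is precisely where the continuity and positivity hypotheses on the densities are used -- so citing the result, as the paper does, is the cheaper and safer path.
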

\begin{proof}
Consider the predictive risk for a given $\theta^*$ and an invariant predictive procedure $\nu$:
$$\int_{\mathcal{Y}^n} D_{KL}(\mu(\mathbf{y}^*_{1:m} \mid \mathbf{y}_{1:n}, \theta^*) \parallel \nu(\mathbf{y}^*_{1:m} \mid \mathbf{y}_{1:n})) \mu(\mathbf{y}_{1:n} \mid \theta^*)$$
We assume $\nu(\mathbf{y}^*_{1:m} \mid \mathbf{y}_{1:n}) \ll \mu(\mathbf{y}^*_{1:m} \mid \mathbf{y}_{1:n}, \theta^*)$ $\mu(\mathbf{y}_{1:n} \mid \theta)$-almost surely since otherwise the risk would be infinite. By definition, KL-divergence is then equal to
$$= -\int_{\mathcal{Y}^n} \int_{\mathcal{Y}^m} \log\left(\frac{d\nu(\mathbf{y}^*_{1:m} \mid \mathbf{y}_{1:n})}{d\mu(\mathbf{y}^*_{1:m} \mid \mathbf{y}_{1:n}, \theta^*)}\right) d\mu(\mathbf{y}^*_{1:m} \mid \mathbf{y}_{1:n}, \theta^*) \mu(\mathbf{y}_{1:n} \mid  \theta^*)$$
Since $\mu(\mathbf{y}^*_{1:m} \mid \mathbf{y}_{1:n}, \theta^*) \sim \eta$ and hence $\nu(\mathbf{y}^*_{1:m} \mid \mathbf{y}_{1:n}) \ll \mu(\mathbf{y}^*_{1:m} \mid \mathbf{y}_{1:n}, \theta^*) \ll \eta^m$ we can write this as
$$= -\int_{\mathcal{Y}^n} \int_{\mathcal{Y}^m} \log\left(\frac{d\nu(\mathbf{y}^*_{1:m} \mid \mathbf{y}_{1:n})}{d\eta^m}\right) - \log\left(\frac{d\mu(\mathbf{y}^*_{1:m} \mid \mathbf{y}_{1:n}, \theta^*)}{d\eta^m}\right) \, d\mu(\mathbf{y}^*_{1:m} \mid \mathbf{y}_{1:n}, \theta^*) \mu(\mathbf{y}_{1:n} \mid \theta^*)$$

Now we want to disintegrate the integral over $\mathcal{Y}^n$ using the orbit-regularity of the space. Let $\mathcal{Y}_0^n$ be the subset on which $G$ does not act transitively. Since it has measure 0 under any measure $\mu(\mathbf{y}_{1:n} \mid \theta^*)$ we can write the integral as
$$=-\int_{\mathcal{Y}^n \setminus \mathcal{Y}_0^n} \int_{\mathcal{Y}^m} \log\left(\frac{d\nu(\mathbf{y}^*_{1:m} \mid \mathbf{y}_{1:n})}{d\eta^m}\right)- \log\left(\frac{d\mu(\mathbf{y}^*_{1:m} \mid \mathbf{y}_{1:n}, \theta^*)}{d\eta^m}\right) \, d\mu(\mathbf{y}^*_{1:m} \mid \mathbf{y}_{1:n}, \theta^*) \mu(\mathbf{y}_{1:n} \mid \theta^*)$$
Since $\mathcal{Y}_0^n$ is assumed to be closed, $\mathcal{Y}^n \setminus \mathcal{Y}_0^n$ is still a standard Borel space.

Consider its orbit space projection under the group action $\pi: \mathcal{Y} \rightarrow \mathcal{Y} / G$. This induces the orbit space  $((\mathcal{Y}^n \setminus \mathcal{Y}_0^n)/G, \mu(o \mid \theta^*))$ as a new Borel probability space with $\pi$ as an inverse-measure-preserving function. Since the orbit space is the continuous image of the standard Borel space $\mathcal{Y}^n \setminus \mathcal{Y}_0^n$ under the quotient map, it is an analytic Borel space. Now by the disintegration theorem there exist probability measures $\left\langle\mu_o\right\rangle_{o \in \mathcal{Y}/G}$ such that our optimization objective becomes
\begin{equation}
\label{eq:obj-disintegrated}
=-\int_{(\mathcal{Y}^n \setminus \mathcal{Y}_0^n) / G} \int_{\phi^{-1}(o)} \int_{\mathcal{Y}^m} \log\left(\frac{d\nu(\mathbf{y}^*_{1:m} \mid \mathbf{y}_{1:n})}{d\eta^m}\right) - \log\left(\frac{d\mu(\mathbf{y}^*_{1:m} \mid \mathbf{y}_{1:n}, \theta^*)}{d\eta^m}\right) 
\end{equation}
$$d\mu(\mathbf{y}^*_{1:m} \mid \mathbf{y}_{1:n}, \theta^*) d\mu_o(\mathbf{y}_{1:n} \mid \theta^*) d\rho(o \mid \theta^*)$$
Orbits are invariant under the group action $\rho(o \mid \theta^*) = \rho(o \mid g\theta^*)$. Since $G$ acts simply transitive on $\Theta$, $\rho(o \mid \theta^*)$ does not depend on $\theta^*$. Since we have assumed that the action of $G$ on $\mathcal{Y}$ is free, it is simply transitive on every orbit. Choosing a representative from each orbit (using the axiom of choice), we can represent every element $x$ within an orbit as the unique group element transforming the representative into $x$. This allows us to write the integral as
$$= -\int_{(\mathcal{Y}^n \setminus \mathcal{Y}_0^n) / G} \int_{G} \int_{\mathcal{Y}^m} \log\left(\frac{d\nu(\mathbf{y}^*_{1:m} \mid o, g)}{d\eta^m}\right) - \log\left(\frac{d\mu(\mathbf{y}^*_{1:m} \mid o, g, \theta^*)}{d\eta^m}\right) \, d\mu(\mathbf{y}^*_{1:m} \mid o, g, \theta^*) d\mu_o(g \mid \theta^*) d\rho(o)$$

Since $G$ is assumed to be $\sigma$-compact, its left Haar measure is $\sigma$-finite. We choose a normalization of the left Haar measure that we use for every orbit. Our orbit-regularity assumption guarantees that $\forall o: \mu_o(g \mid \theta^*) \ll d\mu_L(g)$, allowing us to apply the Radon-Nikodym theorem:
$$= -\int_{(\mathcal{Y}^n \setminus \mathcal{Y}_0^n) / G} \int_{G} \int_{\mathcal{Y}^m} \log\left(\frac{d\nu(\mathbf{y}^*_{1:m} \mid o, g)}{d\eta^m}\right) - \log\left(\frac{d\mu(\mathbf{y}^*_{1:m} \mid o, g, \theta^*)}{d\eta^m}\right) $$
$$d\mu(\mathbf{y}^*_{1:m} \mid o, g, \theta^*) \frac{d\mu_o(g \mid \theta^*)}{d\mu_L} d\mu_L(g) d\rho(o)$$
For any $g_1 \in G$:
$$=-\int_{(\mathcal{Y}^n \setminus \mathcal{Y}_0^n) / G} \int_{G} \int_{\mathcal{Y}^m} \log\left(\frac{d\nu(\mathbf{y}^*_{1:m} \mid o, g g_1^{-1} g_1)}{d\eta^m}\right) - \log\left(\frac{d\mu(\mathbf{y}^*_{1:m} \mid o, g, \theta^*)}{d\eta^m}\right)$$
$$ d\mu(\mathbf{y}^*_{1:m} \mid o, g, \theta^*) \frac{d\mu_o(g g_1^{-1} g_1 \mid \theta^*)}{d\mu_L} d\mu_L(g) d\rho(o)$$
By disintegrating $\mu(h \mathbf{y}_{1:n} \mid h\theta^*)$ like $\mu(\mathbf{y}_{1:n} \mid \theta^*)$ we see that it can be disintegrated with orbit densities $\mu_o(hg \mid h\theta^*)$ and the same density on $\mathcal{Y}^n/G$. Hence, $\mu_o(hg \mid h\theta^*) = \mu_o(g \mid \theta^*)$. Due to the left-invariance of $\mu_L(g)$ we get:
$$= - \int_{(\mathcal{Y}^n \setminus \mathcal{Y}_0^n) / G} \int_{G} \int_{\mathcal{Y}^m} \log\left(\frac{d\nu_o(g_1 g^{-1} \mathbf{y}^*_{1:m} \mid g_1)}{d\eta^m}\right) - \log\left(\frac{d\mu(g_1 g^{-1}\mathbf{y}^*_{1:m} \mid o, g_1, g_1 g^{-1}\theta^*)}{d\eta^m}\right) $$
$$ d\mu(\mathbf{y}^*_{1:m} \mid o, g, \theta^*) \frac{d\mu_o(g_1 \mid g_1 g^{-1} \theta^*)}{d\mu_L} d\mu_L(g) d\rho(o)$$
The Jacobian determinants $\left| \det J^{\eta^m}_{g_1 g^{-1}}(\mathbf{y}^*_{1:m})\right|$ cancel out between the log-terms. Since $\mu(\mathbf{y}_{1:m}, \mathbf{y}_{1:n} \mid \theta^*) \sim \eta^{n+m}$ and $\mu(\mathbf{y}_{1:n} \mid \theta^*) \sim \eta^n$ we have $\mu(\mathbf{y}_{1:m} \mid \mathbf{y}_{1:n}, \theta^*) \sim \eta^m$. Next we use the invariance of $\mu(\mathbf{y}^*_{1:m} \mid \theta^*)$ to arrive at
$$-\int_{(\mathcal{Y}^n \setminus \mathcal{Y}_0^n) / G}\int_{G} \int_{\mathcal{Y}^m} \left( \log\left(\frac{d\nu_o(g_1 g^{-1} \mathbf{y}^*_{1:m} \mid g_1)}{d\eta^m}\right)- \log\left(\frac{d\mu(g_1 g^{-1}\mathbf{y}^*_{1:m} \mid o, g_1, g_1 g^{-1}\theta^*)}{d\eta^m}\right)\right)$$
$$\left| \det J^{\eta^m}_{g_1 g^{-1}}(\mathbf{y}^*_{1:m}) \right| \frac{d\mu(g_1 g^{-1}\mathbf{y}^*_{1:m} \mid o, g_1, g_1 g^{-1}\theta^*)}{d\eta^m} d\eta^m \frac{d\mu_o(g_1 \mid g_1 g^{-1} \theta^*)}{d\mu_L} d\mu_L(g) d\rho(o)$$
Substituting $\mathbf{y}^*_{1:m}$ with $g g_1^{-1} \mathbf{y}^*_{1:m}$ the Jacobian cancels out:
$$= -\int_{(\mathcal{Y}^n \setminus \mathcal{Y}_0^n) / G} \int_{G} \int_{\mathcal{Y}^m} \log\left(\frac{d\nu_o(\mathbf{y}^*_{1:m} \mid g_1)}{d\eta^m}\right) - \log\left(\frac{d\mu(\mathbf{y}^*_{1:m} \mid o, g_1, g_1 g^{-1}\theta^*)}{d\eta^m}\right)$$
$$ \frac{d\mu(\mathbf{y}^*_{1:m} \mid o, g_1, g_1 g^{-1}\theta^*)}{d\eta^m} d\eta^m \frac{d\mu_o(g_1 \mid g_1 g^{-1} \theta^*)}{d\mu_L}d\mu_L(g)d\rho(o)$$
Due to the relationship between the left and right Haar measure, we get:
$$= -\Delta(g_1) \int_{(\mathcal{Y}^n \setminus \mathcal{Y}_0^n) / G}  \int_{\mathcal{Y}^m} \int_{G} \log\left(\frac{d\nu_o(\mathbf{y}^*_{1:m} \mid g_1)}{d\eta^m}\right) - \log\left(\frac{d\mu(\mathbf{y}^*_{1:m} \mid o, g_1, g\theta^*)}{d\eta^m}\right)$$
$$ \frac{d\mu(\mathbf{y}^*_{1:m} \mid o, g_1, g\theta^*)}{d\eta^m} \frac{d\mu_o(g_1 \mid g\theta^*)}{d\mu_L}d\mu_R(g)d\eta^m  d\rho(o)$$
Since $G$ acts simply transitively on $\Theta$:
$$
\label{eq:ind-of-theta_star}
= -\Delta(g_1) \int_{(\mathcal{Y}^n \setminus \mathcal{Y}_0^n) / G}  \int_{\mathcal{Y}^m} \int_{\Theta} \log\left(\frac{d\nu_o(\mathbf{y}^*_{1:m} \mid g_1)}{d\eta^m}\right) - \log\left(\frac{d\mu(\mathbf{y}^*_{1:m} \mid o, g_1, \theta)}{d\eta^m}\right)$$
$$ \frac{d\mu(\mathbf{y}^*_{1:m} \mid o, g_1, \theta)}{d\eta^m} \frac{d\mu_o(g_1 \mid \theta)}{d\mu_L}  d\mu_R(\theta)d\eta^m d\rho(o)$$
where $\mu_R(\theta)$ is the unique up to a constant factor right-invariant measure on $\Theta$ under the simply transitive group action of $G$. This shows that the predictive risk of \textbf{any} invariant predictive procedure is constant in $\theta^*$.

We now want to find the invariant predictive procedure $\nu$ minimizing this predictive risk. Hence, we drop the term that does not depend on $\nu$:
$$\Delta(g_1) \int_{(\mathcal{Y}^n \setminus \mathcal{Y}_0^n) / G} \left( -\int_{\mathcal{Y}^m}\log\left(\frac{d\nu_o(\mathbf{y}^*_{1:m} \mid g_1)}{d\eta^m}\right) \int_{\Theta} \frac{d\mu(\mathbf{y}^*_{1:m} \mid o, g_1, \theta)}{d\eta^m}  \frac{d\mu_o(g_1 \mid \theta)}{d\mu_L}  d\mu_R(\theta)d\eta^m \right) d\rho(o)$$

Consider the measures $\mu(o) \times \mu_L(g)$ and $\mu(o) \times \mu_o(g_1 \mid \theta)$ on $(\mathcal{Y}^n \setminus \mathcal{Y}_0^n)/G \times G$. Then
$$\frac{d\mu_o}{d\mu_L}(g_1 \mid \theta) = \frac{d\rho \times \mu_o}{d\rho \times \mu_L}(o, g_1 \mid \theta)$$
We have $d\mu(o) \times \mu_L(g) \sim p(\mathbf{y}_{1:n} \mid \theta) \sim \eta^n$. Pulling out the Radon-Nikodym derivative $\frac{d\eta^n}{d\rho \times \mu_L}(o, g_1)$ we have:
$$\Delta(g_1) \int_{(\mathcal{Y}^n \setminus \mathcal{Y}_0^n) / G} \frac{d\eta^n}{d\rho \times \mu_L}(o, g_1) \left( -\int_{\mathcal{Y}^m}\log\left(\frac{d\nu_o(\mathbf{y}^*_{1:m} \mid g_1)}{d\eta^m}\right)\right.$$
$$\left.\int_{\Theta} \frac{d\mu(\mathbf{y}^*_{1:m} \mid o, g_1, \theta)}{d\eta^m}  \frac{d\mu(o, g_1 \mid \theta)}{d\eta^n}  d\mu_R(\theta)d\eta^m \right) d\rho(o)$$

The expression
$$\int_{\Theta} \frac{d\mu(\mathbf{y}^*_{1:m} \mid o, g_1, \theta)}{d\eta^m}  \frac{d\mu(o, g_1 \mid \theta)}{d\eta^n}  d\mu_R(\theta)$$
can now be recognized as the standard formulation of the Bayesian posterior predictive distribution, with a right-invariant prior. $(o, g_1)$ can be recognized as our observation. While the predictive risk is constant in $g_1$, for $o$ we need to integrate over $\rho(o)$, weighted by $\frac{d\eta^n}{d\rho \times \mu_L}(o, g_1)$.

Since we assume the integral to converge and then also be integrable in $\mathbf{y}^*_{1:m}$ in the statement of our theorem, we are minimizing the cross-entropy to this probability density over $\mathbf{y}^*_{1:m}$, scaled by a constant factor. The cross-entropy $H(p, q)$ is minimized in $p$ by $p:=q$. Hence, the optimal invariant predictive procedure is given by:
$$\frac{\nu(\mathbf{y}^*_{1:m} \mid \mathbf{y}_{1:n})}{d\eta^m} \propto \int_{G} \frac{d\mu(\mathbf{y}^*_{1:m} \mid \mathbf{y}_{1:n}, \theta)}{d\eta^m}  \frac{d\mu(\mathbf{y}_{1:n} \mid \theta)}{d\eta^n}  d\mu_R(\theta)$$

We can verify that for any $h \in G$ we have the desired invariance:
$$\nu(h \mathbf{y}^*_{1:m} \mid h \mathbf{y}_{1:n}) \propto \int_{G} \mu(h \mathbf{y}^*_{1:m} \mid \mathbf{y}_{1:n}, \theta) \frac{d\mu(h \mathbf{y}_{1:n} \mid \theta)}{d\eta^n}  d\mu_R(\theta)$$
$$\propto \int_{G} \mu(\mathbf{y}^*_{1:m} \mid \mathbf{y}_{1:n}, h^{-1} \theta) \frac{d\mu(g \mid h^{-1} \theta)}{d\eta^n}  d\mu_R(\theta) \propto \nu(\mathbf{y}^*_{1:m} \mid \mathbf{y}_{1:n}) $$
If $f$ is our risk function we have shown that $\argmin_\nu f(\theta, \nu)$ is constant in $\theta$, and hence have $\argmin_\nu \max_\theta = \argmin_\nu f(\theta, \nu)$. Therefore, this choice of $\nu$ minimizes the worst-case predictive risk (\ref{eq:maximin}). Additionally, the predictive risk of any invariant predictive procedure is affine in the weighted cross-entropy in expectation over $\mu(o)$ (and any choice of $g_1$) to this optimal predictive procedure. 

Finally, Liang \& Barron's theorem \cite{liang2004exact} implies that the best invariant predictive procedure is the best predictive procedure overall if $G$ is amenable. While they assume a simpler invariance structure where the group action on $\mathcal{Y}^m$ and $\mathcal{Y}^n$ are independently well-defined, the argument in their proof still holds with our non-separable invariance structure, where the action on $\mathcal{Y}^m$ may depend on $\mathcal{Y}^n$.
\end{proof}
In addition to proving that the best invariant predictive procedure is the best predictive procedure, Liang \& Barron \cite{liang2004exact} also provide an intuition on why the statement 1 and 2 of our theorem should be true in Appendix D of their paper. However, they do not state precise assumptions (like freeness of the group action) or provide a proof beyond what Komaki \cite{komaki2002bayesian} has shown.

Note that unlike Bayesian prediction with a proper prior, we may need multiple observations for the posterior predictive integral to converge, allowing us to make a prediction. Even then, we can usually only guarantee that the integral converges $\eta^n$-almost always instead of always.

\section{Application to the Multivariate Normal Distribution}
Our theorem now allows us to provide a provably maximin-optimal predictive procedure for the multivariate normal distribution.

\begin{lemma}
$(\mathbb{R}^d)^n$  is orbit-regular under the action on $G_\mathcal{N}$ if $n>d$.
\end{lemma}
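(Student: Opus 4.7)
The plan is to verify the hypotheses of Proposition \ref{prop:orb-reg-1}. The ambient manifold $X = (\mathbb{R}^d)^n \cong \mathbb{R}^{dn}$ is trivially oriented and smooth, and for any fixed $\theta$ the multivariate Gaussian density is smooth and strictly positive, so $\mu(\mathbf{y}_{1:n}\mid\theta)$ corresponds to a nowhere-vanishing volume form, handling conditions (1) and (2).

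For condition (3), I would take $X_0$ to be the set of configurations whose ``centered'' matrix $D = [\mathbf{y}_2-\mathbf{y}_1 \mid \cdots \mid \mathbf{y}_n-\mathbf{y}_1]$ has rank strictly less than $d$. Since $D \mapsto VD$ under the group action and $V$ is invertible, $X_0$ is $G_\mathcal{N}$-invariant, hence a union of orbits. It is cut out by the finitely many $d\times d$ minors of $D$, so it is closed; and because $n>d$ these minors do not all vanish identically, $X_0$ is a proper algebraic subvariety of $\mathbb{R}^{dn}$ and therefore Lebesgue- and $\mu$-null. On the complement, if $(V,\mathbf{m})$ fixes a configuration then $V$ fixes every column of $D$, and since these span $\mathbb{R}^d$ we get $V=I$ and $\mathbf{m}=\mathbf{0}$; the action is thus free, and smoothness is immediate from the definition of the action.

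The one remaining hypothesis (4) -- that $(X\setminus X_0)/G_\mathcal{N}$ be a smooth manifold with the quotient map a submersion -- I would deduce from the quotient manifold theorem, which reduces the task to proving properness of the free smooth action. Suppose $\mathbf{y}^{(k)}\to \mathbf{y}$ and $g_k\mathbf{y}^{(k)} \to \mathbf{y}'$ in $X\setminus X_0$ with $g_k = (V_k, \mathbf{m}_k)$. I would choose indices $i_1 < \cdots < i_d$ so that the columns $\mathbf{y}_{i_j} - \mathbf{y}_1$ assemble into an invertible $d\times d$ matrix $A$; for large $k$ the analogous $A^{(k)}$ is also invertible and $A^{(k)} \to A$. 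Then $V_k A^{(k)}$ converges to an invertible $A'$, forcing $V_k \to A'A^{-1}$ in $GL(d,\mathbb{R})$. This limit is upper triangular (a closed condition), invertible, and its diagonal entries are limits of strictly positive numbers -- necessarily strictly positive since they are nonzero -- so it lies in $G_\mathcal{N}$. Finally $\mathbf{m}_k = (V_k\mathbf{y}_1^{(k)} + \mathbf{m}_k) - V_k\mathbf{y}_1^{(k)}$ converges as a difference of convergent quantities. The main subtlety is this last step, specifically ensuring that strict positivity of the diagonal survives the limit, which rests on combining the closedness of upper triangularity with the invertibility of the limit to rule out zero diagonal entries.
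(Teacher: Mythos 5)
Your overall strategy is sound but takes a genuinely different route from the paper at the decisive step: the paper verifies condition (4) of Proposition \ref{prop:orb-reg-1} by exhibiting an explicit slice (the configurations with sample mean $\mathbf{0}$ and sample covariance $I$, shown to be a submanifold via a regular-value argument) and a diffeomorphism of $X\setminus X_0$ onto $N\times G_\mathcal{N}$, whereas you get the quotient structure from the quotient manifold theorem by proving properness of the restricted action (note the paper's Proposition \ref{prop:orb-reg-2} cannot be cited directly here, since your $X_0$ is not a finite union of orbits, so the direct appeal to the quotient manifold theorem inside condition (4) is the right move). Your choice of a smaller excluded set --- failure of the centered matrix $D$ to have rank $d$, rather than failure of general position --- is legitimate, and your closedness, invariance, null-measure and freeness arguments for it are correct.

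There is, however, a genuine gap in the properness argument: the claim that $V_kA^{(k)}$ ``converges to an invertible $A'$'' is unjustified. The limit of $V_kA^{(k)}$ is the matrix with columns $\mathbf{y}'_{i_j}-\mathbf{y}'_1$, and $\mathbf{y}'\in X\setminus X_0$ only guarantees that \emph{all} difference vectors at $\mathbf{y}'$ together span $\mathbb{R}^d$; when $n>d+1$ the particular $d$ columns you selected (chosen to be independent at the other limit $\mathbf{y}$) may be linearly dependent at $\mathbf{y}'$. Since invertibility of $\lim_k V_k$ --- which, as you note, is exactly what rules out zero diagonal entries --- is made to rest on the invertibility of $A'$, the argument as written does not close. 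The repair is short: convergence of $V_k$ never needed $A'$ invertible, because $A^{(k)}\to A$ with $A$ invertible already gives $V_k=(V_kA^{(k)})(A^{(k)})^{-1}\to A'A^{-1}=:V$; passing to the limit in $V_kD^{(k)}\to D'$ then yields $VD=D'$, and since $D'$ has rank $d$ (this is where $\mathbf{y}'\notin X_0$ must be used, through the whole centered matrix rather than the selected submatrix), $V$ has rank $d$, hence is invertible, hence upper triangular with strictly positive diagonal, and $\mathbf{m}_k$ converges as you say. With that substitution your proof is correct and gives an alternative to the paper's explicit-slice construction.
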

\begin{proof}
We will use Proposition \ref{prop:orb-reg-1}.

1. $(\mathbb{R}^d)^n$ is a smooth manifold.

2. The multivariate normal distribution defines a probability density that is positive everywhere with respect to Lebesgue measure, and can hence be represented as a volume form.

3. An affine transformation on $\mathbb{R}^d$ is completely determined by how it acts on $d+1$ points in general position.  $n$ points are in general position if no subset of $d+1$ points is affinely dependent. Affine dependence of the points $\mathbf{y}_1, ..., \mathbf{y}_{d+1}$ can be checked by the condition:
$$\det \begin{pmatrix}
1 & 1 & \cdots & 1 \\
y_{11} & y_{21} & \cdots & y_{(d+1)1} \\
y_{12} & y_{22} & \cdots & y_{(d+1)2} \\
\vdots & \vdots & \ddots & \vdots \\
y_{1d} & y_{2d} & \cdots & y_{(d+1)d}
\end{pmatrix} = 0$$
As a preimage of the closed set $\{0\}$ by a continuous function this is a closed subset of $(\mathbb{R}^d)^n$, and $\mu(\mathbf{y}_{1:n} \mid \theta)$ assign $0$ measure for any $\theta \in \Theta$. There are $n \choose d+1$ such submanifolds of sets of points in special positions, together they form the set $S$. The finite union of closed sets of measure 0 is still a closed set of measure 0. Additionally, general position of a set of points is preserved under the group action, allowing us to represent the excluded special position set $X_0$ as the union of orbits. With this exclusion, there is also at most one element of the group transforming  any set of points into any other, guaranteeing freeness of the group action. 

4. If we have a set of $n > d$ points in $\mathbb{R}^d$ in general position, we have the sample mean and positive definite sample covariance matrix
$$\mathbf{\bar{x}}=\frac{1}{n}\sum_{i=1}^{n}\mathbf{x}_i, \quad \bar{\Sigma} = {1 \over {n-1}}\sum_{i=1}^n (\mathbf{x}_i-\mathbf{\bar{x}}) (\mathbf{x}_i-\mathbf{\bar{x}})^T$$
Let $N$ be the set of sets of $n$ points in general position with $\mathbf{\bar{x}} = \mathbf{0}$ and $\bar{\Sigma} = I$. Since $(\mathbf{\bar{x}} = \mathbf{0}, \bar{\Sigma} = I)$ is a regular value (see appendix \ref{sec:regular-value-proof}) the preimage theorem guarantees that $N$ is a smooth submanifold. Every orbit of the group action $V\mathbf{x} + \mathbf{m}$ applied element-wise to a set of points $(\mathbb{R}^d)^n$ in general position contains exactly one set of points in $N$. Therefore we can identify the orbits with the map
$$m: (\mathbf{x}_1, .., \mathbf{x}_n) \rightarrow (\bar{\Sigma}^{-1/2}(\mathbf{x}_1 - \mathbf{\bar{x}}), ..., \bar{\Sigma}^{-1/2}(\mathbf{x}_n - \mathbf{\bar{x}}))$$
where $\bar{\Sigma}^{-1/2}$ is the inverse of the upper triangular Cholesky decomposition of the sample covariance matrix. Furthermore, consider the map $\phi: (\mathbb{R}^d)^n \setminus X_0 \rightarrow N \times G$:
$$\phi(X) = \left(m(X), \left(\mathbf{\bar{x}}, \bar{\Sigma}^{1/2}\right)\right)$$
This map is smooth since the Cholesky decomposition and all other operations used are smooth, injective and surjective. Furthermore, we have the inverse map: $\phi^{-1}: N \times G \rightarrow (\mathbb{R}^d)^n \setminus X_0$:
$$\phi^{-1}(Y, (m, V)) = \left\{VY_i + m\right\}_{i=1}^n$$
which is also smooth. Therefore $\phi$ is a diffeomorphism. Since the projection onto a factor of a product manifold (like $N \times G$) is always a submersion, this further implies that equipping $((\mathbb{R}^d)^n \setminus X_0)/G$ with the smooth structure of $N$ makes the quotient map a submersion. 
\end{proof}

\begin{prop}
(\cite{hewitt1979abstract}, 15.28) The right Haar measure of the group $T(n, \mathbb{R})$ of real upper triangular $n \times n$ matrices is given by $\prod_{i=1}^n (|A_{i,i}|)^{-i} dA_{1, 1} dA_{1, 2}, ..., dA_{n, n}$.
\end{prop}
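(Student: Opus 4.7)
The plan is to compute the Jacobian of right multiplication in coordinates and then solve for the density that makes Lebesgue measure, scaled by it, right-invariant. Parametrize $T(n,\mathbb{R})$ by its $n(n+1)/2$ upper entries $\{A_{ij} : i \le j\}$, let $dA = \prod_{i \le j} dA_{ij}$ denote Lebesgue measure on these coordinates, and look for a density $f$ such that $\mu_R = f(A)\,dA$ satisfies $\mu_R(EG) = \mu_R(E)$ for every fixed $G \in T(n,\mathbb{R})$. A change of variables $A = BG$ reduces this to the pointwise identity $f(BG)\,|\det J_G(B)| = f(B)$, where $J_G$ is the Jacobian of the linear map $B \mapsto BG$ in these coordinates; note that $J_G$ does not depend on $B$.

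The main computational step is evaluating $|\det J_G|$. Because both $A$ and $G$ are upper triangular, $(AG)_{ij} = \sum_{k=i}^{j} A_{ik} G_{kj}$ for $j \ge i$, so the $i$-th row of $AG$ depends only on the $i$-th row of $A$. Hence $J_G$ is block-diagonal, with one block per row $i$. In the $i$-th block, indexed by $j,k \in \{i,\ldots,n\}$, the entry at position $(j,k)$ is $\partial (AG)_{ij}/\partial A_{ik} = G_{kj}$ when $k \le j$ and $0$ otherwise, yielding a triangular matrix whose diagonal is $(G_{ii}, G_{i+1,i+1}, \ldots, G_{nn})$; its determinant is $\prod_{k=i}^n G_{kk}$. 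Multiplying across rows gives $|\det J_G| = \prod_{k=1}^n |G_{kk}|^{k}$, since $G_{kk}$ appears in the $i$-th block for each $i \le k$.

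For the density, the natural ansatz $f(A) = \prod_{i=1}^n |A_{ii}|^{\alpha_i}$ is compatible with right multiplication because $(AG)_{ii} = A_{ii} G_{ii}$ gives $f(AG) = f(A) \prod_i |G_{ii}|^{\alpha_i}$. Substituting into the invariance identity reduces it to $\prod_i |G_{ii}|^{\alpha_i + i} = 1$ for every admissible $G$, which forces $\alpha_i = -i$. This yields $\mu_R = \prod_{i=1}^n |A_{ii}|^{-i}\,dA_{11}\,dA_{12}\cdots dA_{nn}$, matching the claim; uniqueness up to a positive scalar is the standard uniqueness of Haar measure on a locally compact group.

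The only real obstacle is bookkeeping the block structure of $J_G$ and keeping the row/column conventions straight; everything else is a one-line substitution. I would verify the Jacobian formula by hand for $n=2$ and $n=3$ before committing to the general indexing, and then invoke uniqueness of Haar measure to rule out any other right-invariant density.
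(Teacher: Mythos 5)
Your proposal is correct, and the Jacobian computation is the standard argument: right multiplication acts row-by-row, each block is triangular with diagonal $(G_{ii},\dots,G_{nn})$, so $|\det J_G|=\prod_k |G_{kk}|^k$, and the identity $f(BG)\,|\det J_G|=f(B)$ with the multiplicative ansatz forces the exponents $-i$; uniqueness of Haar measure then pins the result down up to a scalar. Note, however, that the paper does not prove this proposition at all — it is quoted directly from Hewitt and Ross (15.28) — so your write-up is a self-contained verification of a cited fact rather than an alternative to an argument in the paper. Two small points worth making explicit if you keep the proof: the group must be taken to consist of \emph{invertible} upper triangular matrices (nonzero diagonal), which is what makes the density $\prod_i |A_{ii}|^{-i}$ finite and strictly positive on the group and the measure a genuine Radon measure there; and the ansatz step is harmless precisely because, once the invariance identity is verified for $\alpha_i=-i$, you have exhibited \emph{a} right-invariant measure, and uniqueness does the rest — you do not need the ansatz to be exhaustive. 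Checking the left-multiplication analogue (which gives $\prod_i |A_{ii}|^{-(n-i+1)}$) is also a useful sanity check that your row/column bookkeeping is consistent with the claimed right Haar measure.
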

\begin{coro}
The right Haar measure of the group $T^+(n, \mathbb{R})$ of real upper triangular $n \times n$ matrices with a positive diagonal is given by 
$$\prod_{i=1}^n (A_{i,i})^{-i} dA_{1, 1} dA_{1, 2}, ..., dA_{n, n}$$
\end{coro}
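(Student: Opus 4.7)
The plan is to deduce this corollary directly from the preceding proposition by restricting the Haar measure of $T(n,\mathbb{R})$ to the open subgroup $T^+(n,\mathbb{R})$. First I would observe that $T^+(n,\mathbb{R})$ sits inside $T(n,\mathbb{R})$ as the subset defined by $A_{i,i}>0$ for all $i$, which is an intersection of finitely many open half-space conditions and hence open. Being closed under matrix multiplication and inversion (the inverse of an upper triangular matrix with positive diagonal has positive diagonal), it is an open subgroup.

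Next I would invoke the standard fact from Haar measure theory that if $\lambda$ is a right Haar measure on a locally compact group $G$ and $H\subseteq G$ is an open subgroup, then the restriction $\lambda|_H$ is a right Haar measure on $H$: openness guarantees local finiteness is inherited, and right-translation invariance is inherited trivially since right-translation by elements of $H$ maps $H$ to itself. Applying this to $G=T(n,\mathbb{R})$, $H=T^+(n,\mathbb{R})$, and the right Haar measure from the preceding proposition, the restricted measure has density $\prod_{i=1}^n |A_{i,i}|^{-i}$ with respect to the coordinate Lebesgue measure. On $T^+(n,\mathbb{R})$ every $A_{i,i}$ is strictly positive, so $|A_{i,i}|=A_{i,i}$ and the density simplifies to the claimed $\prod_{i=1}^n (A_{i,i})^{-i}$.

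There is no real obstacle here; the only point worth stating carefully is the openness of $T^+(n,\mathbb{R})$ in $T(n,\mathbb{R})$, which justifies that the restriction is again Haar (rather than merely right-invariant on a measurable subgroup, where one would need additional care). Since the right Haar measure on a locally compact group is unique up to a positive constant, this also confirms that the expression given is the right Haar measure up to normalization, which is all that is needed for the applications in the sequel.
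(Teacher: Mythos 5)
Your proposal is correct and takes essentially the same route as the paper: both obtain the result by restricting the right Haar measure of $T(n,\mathbb{R})$ to the subgroup $T^+(n,\mathbb{R})$ and dropping the absolute values. Your justification via openness of the subgroup is in fact slightly more careful than the paper's appeal to the subgroup having positive measure (the two are equivalent here, since a measurable subgroup of positive Haar measure in a locally compact group is open), so there is nothing to add.
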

\begin{proof}
Since the subgroup $T^+(n, \mathbb{R})$ of $T(n, \mathbb{R})$ has positive measure assigned by the right Haar measure of $T(n, \mathbb{R})$, its Haar measure is simply the restriction.
\end{proof}
\begin{prop} \label{prop:right-haar}
(\cite{nachbin1965haar}, Proposition 28) The right Haar measure of the semidirect product of two locally compact groups is the product of their right Haar measures.
\end{prop}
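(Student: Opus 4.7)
The plan is to verify directly, using Fubini's theorem and the defining multiplication law of the semidirect product, that the product measure $\mu_N^R \otimes \mu_H^R$ is right-invariant on $G = N \rtimes_\phi H$, and then appeal to uniqueness of the right Haar measure (up to positive scalar) to conclude. Here $\phi: H \to \mathrm{Aut}(N)$ is the continuous action defining the semidirect product, so that the group law reads $(n_1, h_1)(n_2, h_2) = (n_1 \phi_{h_1}(n_2), h_1 h_2)$. As a preliminary, I would note that the product of two Radon, $\sigma$-finite measures on locally compact Hausdorff groups is a nontrivial Radon measure on the product topological space, which coincides with the topology of $G$; so the only real content is checking right-invariance.

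For the invariance step, fix $(n_0, h_0) \in G$ and a non-negative Borel $f: G \to \mathbb{R}$. I would compute
$$\int_G f\bigl((n,h)(n_0,h_0)\bigr)\, d(\mu_N^R \otimes \mu_H^R)(n,h) = \int_H \int_N f\bigl(n\, \phi_h(n_0),\, h h_0\bigr)\, d\mu_N^R(n)\, d\mu_H^R(h),$$
which is justified by Fubini for $\sigma$-finite measures. For each fixed $h$, $\phi_h(n_0)$ is a fixed element of $N$, so right-invariance of $\mu_N^R$ applied inside gives $\int_N f(n\, \phi_h(n_0), h h_0)\, d\mu_N^R(n) = \int_N f(n, h h_0)\, d\mu_N^R(n)$. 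The outer integral then becomes $\int_H \int_N f(n, h h_0)\, d\mu_N^R(n)\, d\mu_H^R(h)$, and right-invariance of $\mu_H^R$ under $h \mapsto h h_0$ reduces this to $\int_G f\, d(\mu_N^R \otimes \mu_H^R)$. Hence $\mu_N^R \otimes \mu_H^R$ is right-invariant, and by uniqueness of the right Haar measure it is the right Haar measure of $G$ up to a positive constant, which is absorbed into the normalisation of each factor.

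I do not expect any genuine obstacle: the key conceptual point is that the twist $\phi_h$ enters only in the $N$-coordinate and is thus absorbed by right-multiplication inside $N$ before the $\mu_N^R$ integral is ever re-parametrised. This is precisely why the right-Haar formula is multiplicative and the analogous statement for left Haar measures would instead produce a modular-function factor (because there the twist $\phi_{h_0}$ would distort the left-invariant measure on $N$ by a Jacobian $|\det \phi_{h_0}|$). The only technical cautions are to ensure that Fubini is applied in the $\sigma$-finite regime (guaranteed by $\sigma$-compactness of both factors, which is the setting of the main theorem) and that the product Borel $\sigma$-algebra on $N \times H$ agrees with the Borel $\sigma$-algebra of $G$, which holds when both $N$ and $H$ are second countable, or more generally via the Radon extension.
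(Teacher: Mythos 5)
The paper does not prove this proposition; it is quoted directly from Nachbin's book, so there is no internal argument to compare against. Your blind proof is correct and is essentially the standard verification one would find in a textbook: under right translation by $(n_0,h_0)$ the twist appears only as $n \mapsto n\,\phi_h(n_0)$, i.e.\ right multiplication in $N$ by a fixed element (for each fixed $h$), so it is absorbed by right-invariance of $\mu_N^R$ before any reparametrisation, and then right-invariance of $\mu_H^R$ finishes the computation; uniqueness of right Haar measure does the rest. Your side remark explaining why the left-Haar analogue picks up the module $\delta(h)$ of the automorphism $\phi_h$ is also accurate and is a useful sanity check. The only caveats are the ones you already flag: the identification of the Borel structure of $N\rtimes H$ with the product structure and the Fubini step require second countability or $\sigma$-compactness (satisfied in the paper's setting, where $G$ is assumed $\sigma$-compact); for arbitrary locally compact groups one would instead run the same computation at the level of invariant integrals on $C_c(G)$ (the Radon-product formulation), which is how Nachbin's treatment avoids $\sigma$-finiteness issues. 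With that qualification your argument is a complete and self-contained substitute for the citation.
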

Now we have the following corollary of the main result:
\begin{coro}
An optimal predictive procedure for the $d$-dimensional multivariate normal distribution in the sense of (\ref{eq:maximin}) when having observed $n > d$ samples and wanting to predict the next $m$ samples is given by the Bayesian posterior predictive with improper prior
$$\mu_R(U, \mu) \propto \prod_{i=1}^d (U_{i,i})^{-i} dU_{1, 1} dU_{1, 2}, ..., dU_{d, d} d\mu$$
Here the normal distribution is parametrized in terms of the mean $\mu$ and the upper triangular Cholesky decomposition of the covariance matrix $U$. Additionally, this predictive procedure is invariant under $G_\mathcal{N}$.
\end{coro}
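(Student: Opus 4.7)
The plan is to invoke the main theorem directly, and then to produce the right-invariant measure on $\Theta$ explicitly as the Haar measure of $G_\mathcal{N}$. The bulk of the assumption-checking has been done in the earlier material: Proposition 1.2 establishes the group structure and simple transitivity of $\alpha_\Theta$; Proposition 1.3 establishes that $G_\mathcal{N}$ is an amenable Lie group; the preceding lemma establishes orbit-regularity of $(\mathbb{R}^d)^n$ under $G_\mathcal{N}$ precisely when $n>d$; the density of a non-degenerate multivariate normal is continuous, positive and equivalent to Lebesgue measure; and the group actions $(V,\mathbf{m})\cdot(U,\boldsymbol\mu)$ and $(V,\mathbf{m})\cdot\mathbf{y}$ are affine, hence smooth, measurable, and continuous in every argument. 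With these in place, parts 1, 2 and 3 of the main theorem all apply, and the optimal invariant predictive distribution is the Bayesian posterior predictive with an arbitrary right-invariant measure $\mu_R$ on $\Theta$.

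Next I would identify $\Theta$ with $G_\mathcal{N}$ via $\alpha_\Theta$: simple transitivity lets me pick a reference parameter, say $(U_0,\boldsymbol\mu_0)=(I,\mathbf{0})$, and map $(V,\mathbf{m})\mapsto (V,\mathbf{m})\cdot(I,\mathbf{0})=(V,\mathbf{m})$. Under this identification the right-invariant measure on $\Theta$ is pulled back from the right Haar measure of $G_\mathcal{N}$. Now $G_\mathcal{N}$ is the internal semidirect product $T^+(d,\mathbb{R})\ltimes\mathbb{R}^d$ with $T^+(d,\mathbb{R})$ acting on $\mathbb{R}^d$ by matrix multiplication, which is precisely the structure read off from the block-matrix realization in the proof of Proposition 1.3. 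By Proposition \ref{prop:right-haar}, the right Haar measure of a semidirect product of locally compact groups is the product of the right Haar measures of the factors; by the corollary to the Hewitt–Ross formula the right Haar measure of $T^+(d,\mathbb{R})$ is $\prod_{i=1}^d U_{i,i}^{-i}\,dU$; and the right Haar measure of $(\mathbb{R}^d,+)$ is Lebesgue measure $d\boldsymbol\mu$. Taking the product gives exactly the prior claimed in the corollary, and invariance of the resulting predictive procedure under $G_\mathcal{N}$ is the invariance verification already carried out at the end of the main theorem's proof.

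The only non-routine step is verifying the convergence hypothesis (2a) of the main theorem: one must show that
\[
\int_{\mathcal{Y}^m}\int_{\Theta}\frac{d\mu(\mathbf{y}^*_{1:m}\mid\mathbf{y}_{1:n},\theta)}{d\eta^m}\frac{d\mu(\mathbf{y}_{1:n}\mid\theta)}{d\eta^n}\,d\mu_R(\theta)\,d\eta^m(\mathbf{y}^*_{1:m})
\]
is finite for $\eta^n$-almost every $\mathbf{y}_{1:n}$. This is the main obstacle and the reason $n>d$ is needed: with fewer than $d+1$ observations the marginal $\int p(\mathbf{y}_{1:n}\mid\theta)\,d\mu_R(\theta)$ diverges, while for $n>d$ the sample covariance is almost surely positive definite (this is essentially the general-position argument used in the orbit-regularity lemma), and a standard Gaussian–inverse-Wishart style calculation in the $(U,\boldsymbol\mu)$ parametrization shows finiteness. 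I would carry out that calculation by integrating first over $\boldsymbol\mu$ (producing a Gaussian integral) and then over $U$ (producing an inverse-Wishart-type integral whose convergence is governed precisely by the $U_{i,i}^{-i}$ factors together with the $n$ observed samples), confirming convergence whenever $n>d$.

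Finally, since all three conclusions of the main theorem fire — constancy of the invariant risk, optimality among invariant procedures, and (via amenability plus continuous positive density) optimality overall — the stated Bayesian posterior predictive is maximin-optimal in the sense of \eqref{eq:maximin}, and its invariance under $G_\mathcal{N}$ follows from the invariance verification inside the main theorem's proof.
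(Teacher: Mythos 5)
Your proposal is correct and follows essentially the same route as the paper: verify the main theorem's hypotheses via the previously established group structure, amenability, and orbit-regularity results, identify $\mu_R$ through the semidirect-product decomposition $T^+(d,\mathbb{R})\ltimes(\mathbb{R}^d,+)$ together with the Hewitt--Ross formula, and reduce the convergence condition (2a) to posterior propriety under $n>d$ observations. The propriety computation you sketch (integrate out $\boldsymbol\mu$ as a Gaussian, then control the $U$-integral via the $U_{i,i}^{-i}$ factors and the almost-surely positive definite sample covariance) is exactly what the paper carries out in its appendix on posterior propriety.
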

\begin{proof}
The posterior is almost surely proper for this prior after $n>d$ observations as shown in appendix \ref{sec:posterior-propriety}. Hence, the posterior predictive with the right invariant prior defines a valid predictive procedure. $\mathbb{R}^d$ is a standard Borel space under the Euclidean topology and orbit-regular. We choose the standard Borel measure of $\mathbb{R}^d$ as $\eta$. Furthermore, $(\mathbb{R}^d)^n$ is orbit-regular under $G_\mathcal{N}$.  $G_\mathcal{N}$ is a semidirect product of $T^+(d, \mathbb{R})$ and $(\mathbb{R}^d, +)$. Since the latter is unimodular, the given prior is a right-invariant measure on $\Theta$. Since $G_\mathcal{N}$ is also amenable and the multivariate normal likelihood is continuous with respect to $\eta$, optimality among all predictive procedures is guaranteed.
\end{proof}

\subsection{Uniqueness}
We have previously stated that the right-invariant measure is unique up to a constant factor given $G$ and its action. We have even shown that the choice of $G$ is unique for the multivariate normal distribution in Proposition \ref{prop:unique-group}. However, the \textbf{group action} is not unique: Different coordinate charts of $\mathbb{R}^d$ induces by different bases and origins lead to different group actions on the space of multivariate normal distributions. Changing the origin has no effect on the right-invariant prior since our prior is constant in the mean. However, changing the basis, for example permuting the basis vectors, can lead to entirely different right-invariant priors (over the covariance matrix). The right-invariant prior density will be the same, however it is specified with respect to a different coordinate chart, leading to a different measure on the space of multivariate normal distributions. 

To gain a better understanding about how the different right-invariant priors are related and show they can indeed be different, we will convert the posterior predictive in the standard coordinate to the coordinate chart with the same origin but the basis $B$. This is the right-invariant prior posterior predictive in the standard basis:
$$\int_{\mathbb{R}^d} \int_{T^+(d, \mathbb{R})} p(\mathbf{y}_1^*, \dots, \mathbf{y}_m^*, \mathbf{y}_1, \dots, \mathbf{y}_n \mid U, \mu) \prod_{i=1}^d (U_{i,i})^{-i} dU_{1, 1} dU_{1, 2}, \dots, dU_{d, d} d\mu$$
Due to the nontrivial behavior of the Cholesky decomposition under a change of basis, we first perform a change of variables to the standard coordinate system of symmetric matrices, based on the upper half of a matrix (including its diagonal). For the upper-triangular Cholesky decomposition this results in (similar to \cite{deemer1951jacobians} Theorem 4.1 but for upper triangular matrices):
$$\int_{\mathbb{R}^d} \int_{\Sigma \succ 0} p(\mathbf{y}_1^*, \dots, \mathbf{y}_m^*, \mathbf{y}_1, \dots, \mathbf{y}_n \mid \Sigma, \mu) \prod_{i=1}^d (\textrm{chol}(\Sigma)_{i,i})^{-2i} d\Sigma_{1, 1} d\Sigma_{1, 2}, \dots, d\Sigma_{d, d} d\mu$$
Now we substitute the integrands $\Sigma' = B \Sigma B^T$ and $\mu' = B \mu$:
$$\int_{\mathbb{R}^d} \int_{\Sigma \succ 0}  p(\mathbf{y}_1^*, \dots, \mathbf{y}_m^*,  \mathbf{y}_1, \dots, \mathbf{y}_n \mid B^{-1} \Sigma' B^{-T}, B^{-1}\mu') \prod_{i=1}^d (\textrm{chol}(B^{-1} \Sigma' B^{-T})_{i,i})^{-2i} d\Sigma' _{1, 1} d\Sigma'_{1, 2}, \dots, d\Sigma'_{d, d} d\mu'$$
Since the transformation is linear, the determinant is only a constant factor that does not change the predictive procedure. Additionally, we represent the observations in our new coordinate chart $\mathbf{y}' = B \mathbf{y}$ and apply the linear transformation rule for multivariate normal distributions:
$$\int_{\mathbb{R}^d} \int_{\Sigma \succ 0} 
 p({\mathbf{y}_1^*}', \dots, {\mathbf{y}_m^*}', \mathbf{y}_1', \dots, \mathbf{y}_n' \mid  \Sigma', \mu') \prod_{i=1}^d (\textrm{chol}(B^{-1} \Sigma' B^{-T})_{i,i})^{-2i} d\Sigma' _{1, 1} d\Sigma'_{1, 2}, \dots, d\Sigma'_{d, d} d\mu'$$
When decomposing a covariance matrix, the squared diagonal entries of the Cholesky decomposition can be interpreted as the variance along the respective basis vectors, conditional on the previous basis vectors. \cite{579332} Therefore, our results shows that the right-invariant priors with the respect to the standard symmetric matrix measure in some coordinate chart are the conditional variances in some other basis to the power of $-i$. This also shows that the right-invariant prior and its posterior predictive are not unique.

These results are unsurprising, since invariance under any change of basis would require invariance under the full affine group $\mathrm{Aff}(d, \mathbb{R}) = (\mathbb{R}^d, +) \rtimes GL(d, \mathbb{R})$ instead of just $(\mathbb{R}^d, +) \rtimes T^+(d, \mathbb{R})$.  As Geisser argued, the posterior predictive from the independence-Jeffreys prior $\det(\Sigma)^{-\frac{d+1}{2}}$ is the optimal posterior predictive procedure that is invariant under $\mathrm{Aff}(d, \mathbb{R})$. \cite{geisser1993predictive} We will numerically compare these predictive procedures in the next section. While the right-invariant priors are not unique, the predictive procedures they induce all share the same predictive risk, as guaranteed by our theorem.

\begin{figure}[htbp]
    \centering
    \begin{subfigure}{0.25\textwidth}
        \includegraphics[width=\textwidth]{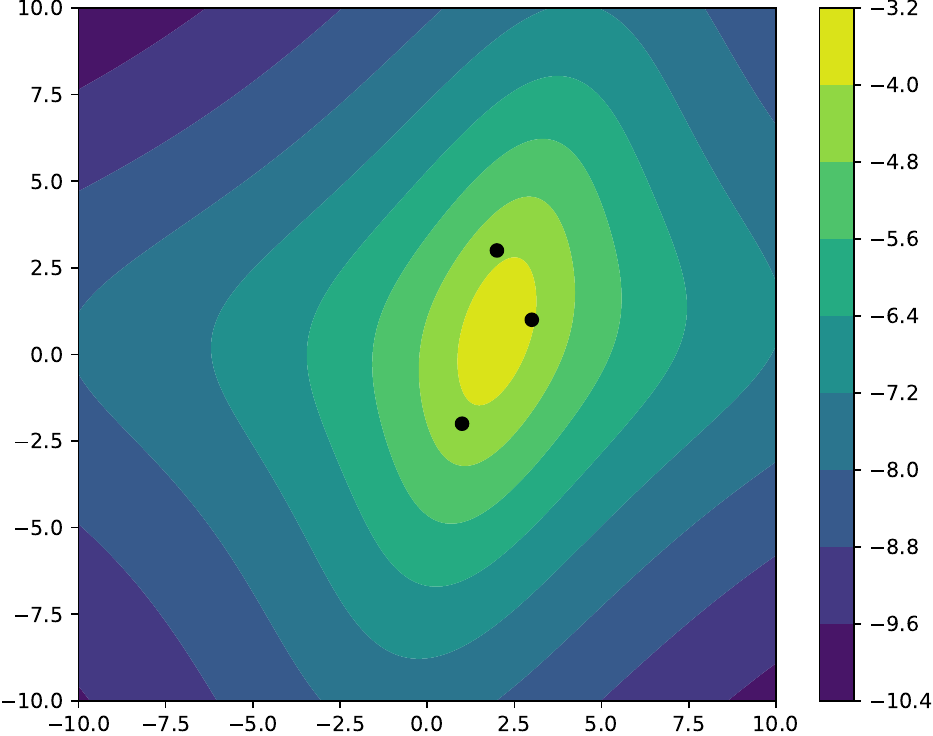}
        \caption{Right-invariant prior}
    \end{subfigure}
    \hfill 
    \begin{subfigure}{0.25\textwidth}
        \includegraphics[width=\textwidth]{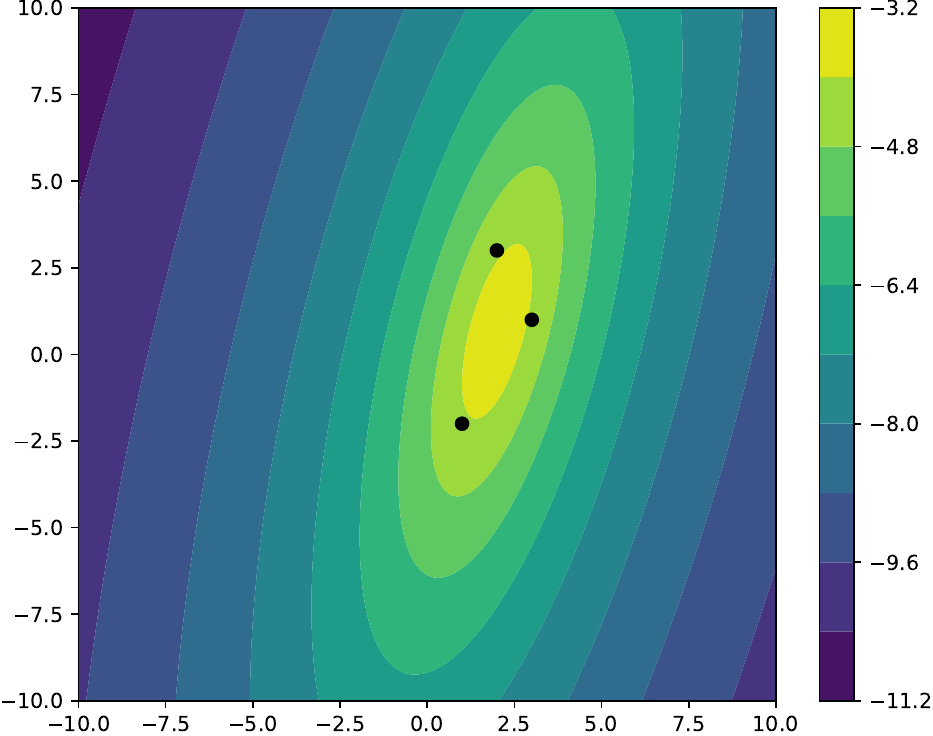}
        \caption{Independence-Jeffreys prior}
    \end{subfigure}
    \hfill 
    \begin{subfigure}{0.25\textwidth}
        \includegraphics[width=\textwidth]{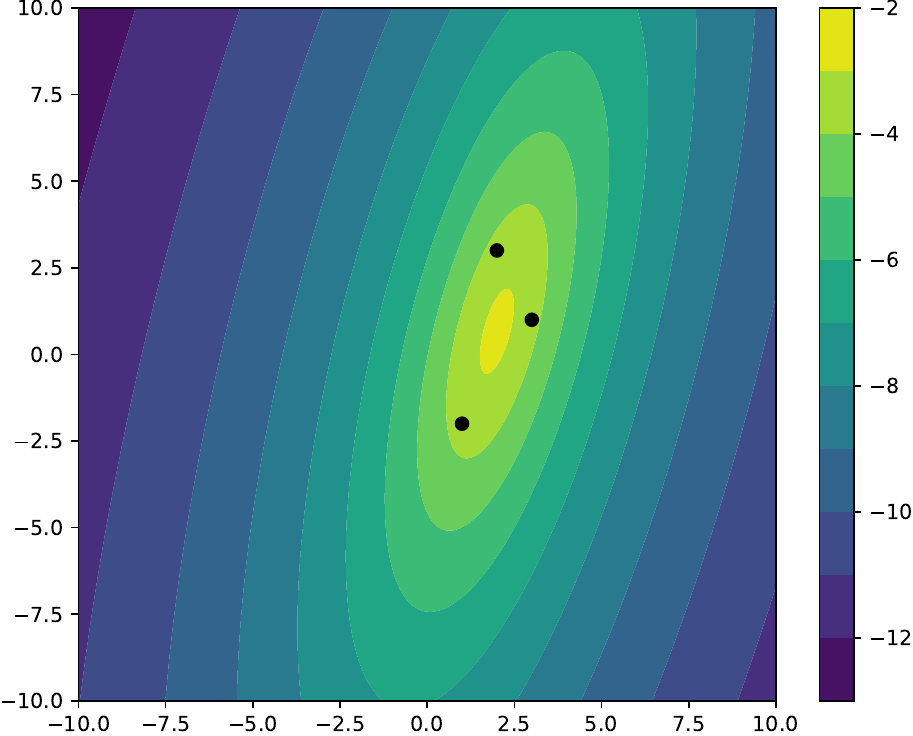}
        \caption{Jeffreys prior}
    \end{subfigure}

    \medskip 

    \begin{subfigure}{0.25\textwidth}
        \includegraphics[width=\textwidth]{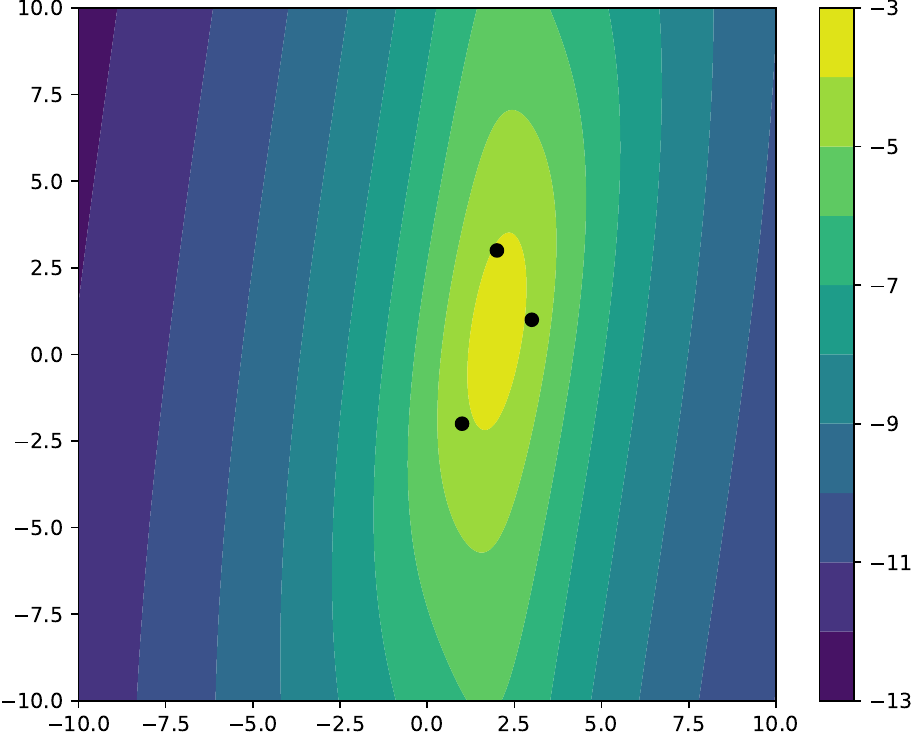}
        \caption{Right-invariant prior swapped}
    \end{subfigure}
    \hfill 
    \begin{subfigure}{0.25\textwidth}
        \includegraphics[width=\textwidth]{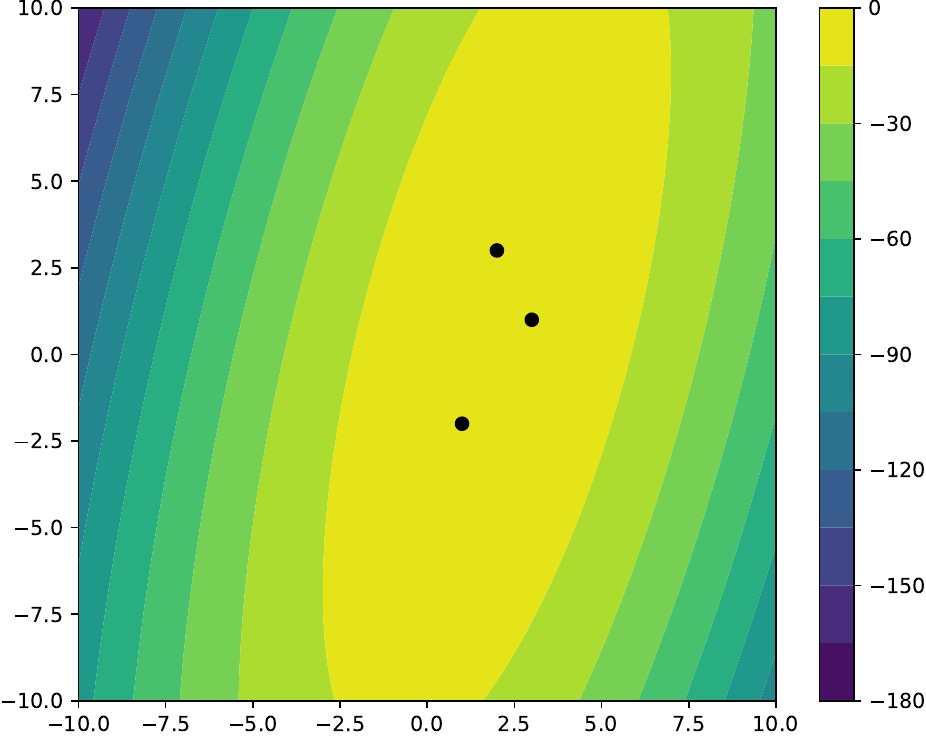}
        \caption{Unbiased plug-in}
    \end{subfigure}
    \hfill 
    \begin{subfigure}{0.25\textwidth}
        \includegraphics[width=\textwidth]{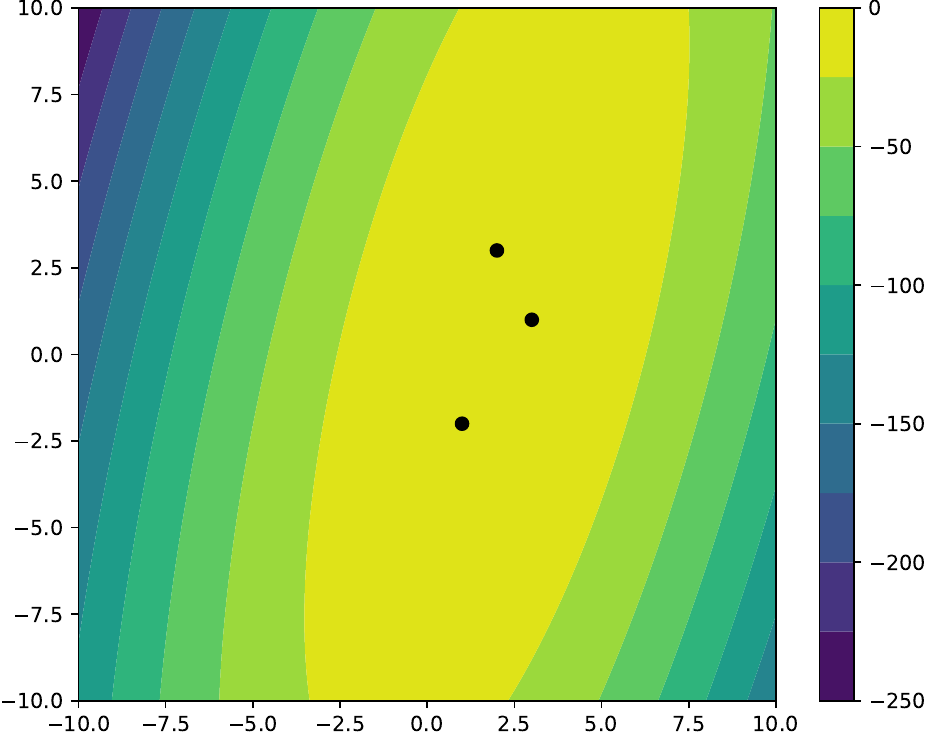}
        \caption{MLE plug-in}
    \end{subfigure}

    \caption{Log-likelihood of different predictive procedures on the bivariate normal distribution for three observations}
    \label{fig:grid}
\end{figure}

\subsection{Numerical Results}
We now want to numerically compare the performance of the posterior predictive distribution of a right-invariant prior $q_\mathrm{R}$ to four alternative invariant predictive procedures: Jeffreys prior $q_\mathrm{J}$, the independence-Jeffreys prior $q_\mathrm{IJ}$ and plug-in prediction with the maximum-likelihood estimate $q_\mathrm{MLE}$ and with unbiased sample covariance $q_\mathrm{unb}$. We choose next-sample prediction $(m=1)$ for the bivariate normal distribution since all five predictive procedures are different in this case and we have a closed-form expression for all of them. 

For all predictive procedures except $q_R$ we have closed-form expressions for any dimensionality $d$ of the normal distribution. Let us define:
$$\mathbf{\bar{x}}=\frac{1}{n}\sum_{i=1}^{n}\mathbf{x}_i, \quad S = \sum_{i=1}^n (\mathbf{x}_i-\mathbf{\bar{x}}) (\mathbf{x}_i-\mathbf{\bar{x}})^T$$
Then we have: \cite{murphy2007conjugate}
$$q_\mathrm{IJ}(\mathbf{x}^* \mid \mathbf{x}_1, ..., \mathbf{x}_n) = t_{n-d}\left(\mathbf{x}^* \left| \mathbf{\bar{x}}, \frac{(n+1)}{n(n-d)} S \right.\right)$$
$$q_\mathrm{J}(\mathbf{x}^* \mid \mathbf{x}_1, ..., \mathbf{x}_n) = t_{n-d+1}\left(\mathbf{x}^* \left| \mathbf{\bar{x}}, \frac{(n+1)}{n(n-d+1)} S\right.\right)$$
$$q_\mathrm{unb}(\mathbf{x}^* \mid \mathbf{x}_1, ..., \mathbf{x}_n) = \mathcal{N}\left(\mathbf{x}^* \left| \mathbf{\bar{x}}, \frac{1}{n-1}S\right.\right)$$
$$q_\mathrm{MLE}(\mathbf{x}^* \mid \mathbf{x}_1, ..., \mathbf{x}_n) = \mathcal{N}\left(\mathbf{x}^* \left| \mathbf{\bar{x}}, \frac{1}{n}S\right.\right)$$
The expression for $q_\mathrm{R}$ is derived for $d=2$ in appendix \ref{sec:bivn-predictive}. We use $\mathbf{u} = (x_{1,1}, ..., x_{n, 1})^T$ and $\mathbf{v} = (x_{1, 2}, ..., x_{n, 2})^T$. $G(\mathbf{v}_1, ..., \mathbf{v}_k)$ denotes the Gram matrix of the vectors $\mathbf{v}_1, ..., \mathbf{v}_k$. Then:
$$
q_R(\mathbf{x}^* \mid \mathbf{x}_1, ..., \mathbf{x}_n) = \frac{n-2}{2\pi} \frac{(n+1)^{(n-1)/2}}{n^{(n-2)/2}} \frac{\det(G( \mathbf{v}, \mathbf{1}_n))}{\det(G(\mathbf{u}, \mathbf{v}, \mathbf{1}_n))^{-\frac{n-2}{2}}}
\frac{\det\left(G\left(\begin{pmatrix}\mathbf{u} \\ x^*_1 \end{pmatrix}, \begin{pmatrix}\mathbf{v} \\ x^*_2 \end{pmatrix}, \mathbf{1}_{n+1}\right)\right)^{-\frac{n-1}{2}}}{\det\left(G\left(\begin{pmatrix}\mathbf{v} \\ x^*_2 \end{pmatrix}, \mathbf{1}_{n+1}\right)\right)}
$$
We evaluate the predictive risk of every predictive procedure for different numbers of observations $n$ using a Monte Carlo estimate using between $2^{28}$ and $2^{32}$ samples for each combination, also estimating the MC estimator's standard deviation. At least three observations are needed for the predictive procedures to become well-defined. Since all evaluated predictive procedures are invariant, their risk does not depend on $\theta^*$. We have summarized our results in Table \ref{tab:pred-risk}.

\begin{table}
    \centering
    \begin{tabular}{|c|c|c|c|c|c|c|c|c|} \hline 
         n&  3&  4&  5&  6&  7&  8&  9& 10\\ \hline 
         $q_\mathrm{R}$&  1.324&  0.839&  0.620&  0.494&  0.411
&  0.3518&  0.3078& 0.2738\\ \hline 
         $q_\mathrm{IJ}$&  1.711&  0.953&  0.673&  0.524&  0.430&  0.366
&  0.3179& 0.2816\\ \hline 
 $q_\mathrm{J}$& 2.018& 1.047& 0.719
& 0.551& 0.448
& 0.378& 0.327&0.289\\ \hline 
         $q_\mathrm{unb}$&  $(2.3 \pm 1.4) * 10^9$&  $37.0 \pm 1.4$&  3.3&  1.56&  1.000&  0.729&  0.571& 0.469\\ \hline 
 $q_\mathrm{MLE}$& $(1.3\pm 0.5) * 10^9$& $49.5 \pm 1.9$& 4.3& 1.97& 1.227& 0.877& 0.675&0.547\\ \hline
    \end{tabular}
    \vspace{0.1cm}
    \caption{Predictive risk of invariant predictive procedures. Only significant digits of the MC estimate are reported if the standard deviation is not explicitly stated.}
    \label{tab:pred-risk}
\end{table}

As we can see, the predictive risk can already be significantly reduced within the plug-in prediction framework by switching from the maximum-likelihood estimate to an unbiased estimator with corrected sample-covariance. However, there is a much bigger improvement from switching to objective Bayesian prediction with Jeffreys prior. Further minor improvements can be achieved by using the independence-Jeffreys prior and then again by using a right-invariant prior instead. The right-invariant prior performs best, consistent with our theoretical optimality result. 

However, the improvements in predictive risk are only significant if the $2n$ degrees of freedom of the observations are close to the $5$ degrees of freedom of parameter space. For example, for $n=3$, $q_\mathrm{R}$ achieves a $\exp(0.39) \approx 1.48$ times higher predictive density on (geometric) average than $q_\mathrm{IJ}$ and $\exp(0.69) \approx 1.99$ times higher than $q_\mathrm{J}$. However, at $n=10$ the improvements are only around $1\%$. While the right-invariant prior is only defined for families of probability distributions with a group structure, and the independence-Jeffreys prior for location-scale models, the Jeffreys prior is defined for any family where the density $p(x \mid \theta)$ is differentiable in $\theta$. This raises the question whether general bounds can be found on how close to optimal the posterior predictive with Jeffreys prior is and when it bounds the predictive risk in the first place.

We visualize the log-likelihood of different predictive procedures in Figure \ref{fig:grid} for a set of three observations (black dots). We can immediately see that the posterior predictive distributions (a-d) are all significantly more heavy-tailed than the plug-in predictive distributions (e) and (f). The posterior predictive of the independence-Jeffreys prior (b) is just a slightly more heavy-tailed version of the Jeffreys prior's posterior predictive (c) but otherwise shares the same elliptical shape. The right-invariant prior posterior predictive on the other hand has a completely different shape and is no longer elliptical. In subfigure (d) we visualize the lack of invariance of the right-invariant prior posterior predictive to reordering the data entries: We swap the $x$ and $y$ coordinates for $\mathbf{x}_1, \dots,\mathbf{x}_n$ and $\mathbf{x}^*$ before querying the predictive procedure. Nevertheless, this modified predictive procedure still has the same predictive risk.

\section{Application to Gaussian Process Regression With Fixed Lengthscale}

We now want to consider a Gaussian Process model with a linear mean and an RBF kernel with fixed lengthscale $\sigma_x > 0$. For given tuples of feature vectors $\mathbf{v}_1, ..., \mathbf{v}_n \in \mathbb{R}^d$ and $\mathbf{w}_1, ..., \mathbf{w}_m \in \mathbb{R}^d$ this gives us the kernel matrix
$$K(\mathbf{v}_{1:n}, \mathbf{w}_{1:m})_{i, j} = \exp\left(-\frac{\left\|\mathbf{v}_i - \mathbf{w}_j\right\|^2}{2 \sigma_x^2}\right)$$
The amplitude $\sigma_y$ is left as a free variable. This results in the likelihood:
$$p(y_{1:n} \mid \mathbf{x}_{1:n}, \sigma_y, \boldsymbol{\beta}) \sim \mathcal{N}([\mathbf{x}_{1:n}]^T \boldsymbol{\beta}, \sigma_y^2 K(\mathbf{x}_{1:n}, \mathbf{x}_{1:n}))$$
where $[\mathbf{x}_{1:n}]$ is the matrix with $\mathbf{x}_{1:n}$ as its columns. Unlike next-sample prediction, the likelihood is not conditionally independent, requiring more attention for the density of $y^*_{1:m}$:
$$p(y^*_{1:m} \mid \mathbf{x}^*_{1:m}, y_{1:n}, \mathbf{x}_{1:n}, \sigma_y, \boldsymbol{\beta})\sim$$
$$\mathcal{N}\left([\mathbf{x}^*_{1:m}]^T\boldsymbol{\beta} + K(\mathbf{x}^*_{1:m}, \mathbf{x}_{1:n}) K(\mathbf{x}_{1:n}, \mathbf{x}_{1:n})^{-1}(y_{1:n} - [\mathbf{x}_{1:n}]^T \boldsymbol{\beta}),\right.$$
$$\left. \sigma_y^2 (K(\mathbf{x}^*_{1:m}, \mathbf{x}^*_{1:m}) - K(\mathbf{x}^*_{1:m}, \mathbf{x}_{1:n}) K(\mathbf{x}_{1:n}, \mathbf{x}_{1:n})^{-1} K(\mathbf{x}_{1:n}, \mathbf{x}^*_{1:m}))\right)$$
We assume that all the vectors we are conditioning on are unique, so $|\{\mathbf{x}_1, ..., \mathbf{x}_n\} \cup \{\mathbf{x}^*_1, ..., \mathbf{x}^*_m\}| = n+m$. This ensures the invertibility and positive-definiteness of the kernel matrices.

\begin{prop}
This likelihood has a group structure with the group $G_{GP}:$ $\mathbf{x} \rightarrow a \mathbf{x} + \mathbf{b}$ with $a \in \mathbb{R}_{>0}$ and $\mathbf{x}, \mathbf{b} \in \mathbb{R}^d$ where the action on $\mathbf{y}_{1:n}$ depends on $[\mathbf{x}_{1:n}]$:
$$\mathbf{y} \rightarrow a \mathbf{y} + [\mathbf{x}_{1:n}]^T\mathbf{b}, \quad (\boldsymbol{\beta}, \sigma_y) \rightarrow (a\boldsymbol{\beta} + \mathbf{b}, a\sigma_y)$$
and the action on $\mathbf{y}_{1:m}^*$ depends on $[\mathbf{x}^*_{1:m}]$, $\mathbf{y}_{1:n}$, and $[\mathbf{x}_{1:n}]$:
$$\mathbf{y}^* \rightarrow a \mathbf{y}^* + [\mathbf{x}^*_{1:m}]^T\mathbf{b} + K(\mathbf{x}^*_{1:m}, \mathbf{x}_{1:n}) K(\mathbf{x}_{1:n}, \mathbf{x}_{1:n})^{-1}((1-a)\mathbf{y} - [\mathbf{x}_{1:n}]^T \mathbf{b})$$
\end{prop}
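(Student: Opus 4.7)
The plan is to verify the three conditions demanded by the main theorem: that $G_{GP}$ is a locally compact $\sigma$-compact group, that the three candidate actions are bona fide group actions (with the conditional $\mathbf{y}^*_{1:m}$-action satisfying the cocycle $(g_2 g_1)_{\mathbf{y}_{1:n}} = (g_2)_{g_1 \mathbf{y}_{1:n}} \circ (g_1)_{\mathbf{y}_{1:n}}$), and that the invariance identities $p(\mathbf{y}_{1:n} \mid \theta) = p(g\mathbf{y}_{1:n} \mid g\theta)$ and $p(\mathbf{y}^*_{1:m} \mid \mathbf{y}_{1:n}, \theta) = p(g_{\mathbf{y}_{1:n}} \mathbf{y}^*_{1:m} \mid g\mathbf{y}_{1:n}, g\theta)$ both hold.

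First I would observe that $G_{GP} \cong \mathbb{R}^d \rtimes \mathbb{R}_{>0}$ with composition $(a_2, \mathbf{b}_2)(a_1, \mathbf{b}_1) = (a_2 a_1, a_2 \mathbf{b}_1 + \mathbf{b}_2)$ is a Lie group (smooth structure inherited from $\mathbb{R}^d \times \mathbb{R}_{>0}$), and as a semidirect product of abelian factors it is solvable, hence amenable, locally compact, and $\sigma$-compact. The action on $\Theta = \mathbb{R}^d \times \mathbb{R}_{>0}$ is manifestly simply transitive: for any source $(\boldsymbol{\beta}_1, \sigma_{y,1})$ and target $(\boldsymbol{\beta}_2, \sigma_{y,2})$, the unique group element sending one to the other is $a = \sigma_{y,2}/\sigma_{y,1}$ and $\mathbf{b} = \boldsymbol{\beta}_2 - a\boldsymbol{\beta}_1$.

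Invariance of the marginal $p(\mathbf{y}_{1:n} \mid \theta)$ follows from the affine transformation rule for Gaussians: under $\mathbf{y}_{1:n} \mapsto a\mathbf{y}_{1:n} + [\mathbf{x}_{1:n}]^T\mathbf{b}$, the pushforward is Gaussian with mean $[\mathbf{x}_{1:n}]^T(a\boldsymbol{\beta}+\mathbf{b})$ and covariance $a^2\sigma_y^2 K(\mathbf{x}_{1:n},\mathbf{x}_{1:n})$, exactly the likelihood at $g\theta$. For the conditional, the central observation is that the predictive covariance $C = K(\mathbf{x}^*_{1:m}, \mathbf{x}^*_{1:m}) - P K(\mathbf{x}_{1:n}, \mathbf{x}^*_{1:m})$ depends only on the feature vectors, so only its $\sigma_y^2$ prefactor scales. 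Using the identity $g\mathbf{y}_{1:n} - [\mathbf{x}_{1:n}]^T(g\boldsymbol{\beta}) = a(\mathbf{y}_{1:n} - [\mathbf{x}_{1:n}]^T\boldsymbol{\beta})$ to simplify $m(g\mathbf{y}_{1:n}, g\boldsymbol{\beta})$, matching the pushforward mean of $\mathcal{N}(m, \sigma_y^2 C)$ under the stated action $\mathbf{y}^*_{1:m} \mapsto a\mathbf{y}^*_{1:m} + \Delta(\mathbf{y}_{1:n}, g)$ to the target mean then pins down $\Delta$ as exactly the expression in the proposition.

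The main obstacle is the careful bookkeeping required to verify the cocycle $(g_2 g_1)_{\mathbf{y}_{1:n}} = (g_2)_{g_1 \mathbf{y}_{1:n}} \circ (g_1)_{\mathbf{y}_{1:n}}$ for the conditional action: one substitutes $g_1 \mathbf{y}_{1:n}$ into the outer $g_2$-displacement, re-expands the $P$-weighted terms, and collects coefficients of $\mathbf{y}_{1:n}$, $[\mathbf{x}_{1:n}]^T\mathbf{b}_i$, and $[\mathbf{x}^*_{1:m}]^T\mathbf{b}_i$, checking that they assemble into the single displacement associated with the product $g_2 g_1 = (a_2 a_1, a_2 \mathbf{b}_1 + \mathbf{b}_2)$. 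All other steps reduce to routine affine-Gaussian manipulations and semidirect-product algebra.
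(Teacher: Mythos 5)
Your group-theoretic groundwork is fine and matches the spirit of the paper's (one-line) proof: $G_{GP}\cong\mathbb{R}^d\rtimes\mathbb{R}_{>0}$ with $(a_2,\mathbf{b}_2)(a_1,\mathbf{b}_1)=(a_2a_1,a_2\mathbf{b}_1+\mathbf{b}_2)$ is a solvable, hence amenable, locally compact $\sigma$-compact Lie group, the action on $(\boldsymbol\beta,\sigma_y)$ is simply transitive, and the invariance of the marginal $p(\mathbf{y}_{1:n}\mid\theta)$ is exactly the affine transformation rule the paper cites. The gap is in the step you present as routine: the claim that matching the pushforward mean of $\mathcal{N}(m,\sigma_y^2C)$ under $\mathbf{y}^*\mapsto a\mathbf{y}^*+\Delta$ to the target mean $m(g\mathbf{y}_{1:n},g\boldsymbol\beta)$ ``pins down $\Delta$ as exactly the expression in the proposition.'' Writing $P:=K(\mathbf{x}^*_{1:m},\mathbf{x}_{1:n})K(\mathbf{x}_{1:n},\mathbf{x}_{1:n})^{-1}$ and using the very identity you quote, $g\mathbf{y}_{1:n}-[\mathbf{x}_{1:n}]^T(a\boldsymbol\beta+\mathbf{b})=a(\mathbf{y}_{1:n}-[\mathbf{x}_{1:n}]^T\boldsymbol\beta)$, one gets $m(g\mathbf{y}_{1:n},g\boldsymbol\beta)=[\mathbf{x}^*_{1:m}]^T(a\boldsymbol\beta+\mathbf{b})+aP(\mathbf{y}_{1:n}-[\mathbf{x}_{1:n}]^T\boldsymbol\beta)=a\,m(\mathbf{y}_{1:n},\boldsymbol\beta)+[\mathbf{x}^*_{1:m}]^T\mathbf{b}$, so the matching forces $\Delta=[\mathbf{x}^*_{1:m}]^T\mathbf{b}$ and the extra term $P\bigl((1-a)\mathbf{y}_{1:n}-[\mathbf{x}_{1:n}]^T\mathbf{b}\bigr)$ of the proposition does not appear. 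That extra term is what you get if you instead match against $m(\mathbf{y}_{1:n},g\boldsymbol\beta)$, i.e.\ transform the parameters but keep the conditioning data fixed; so the stated displacement encodes a different conditional-invariance convention than the one you set out to verify.

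The same mismatch sinks your cocycle check as planned: for the proposition's map the composition $(g_2)_{g_1\mathbf{y}_{1:n}}\circ(g_1)_{\mathbf{y}_{1:n}}$ produces the coefficient $a_1+a_2-2a_1a_2$ in front of $P\mathbf{y}_{1:n}$, whereas $(g_2g_1)_{\mathbf{y}_{1:n}}$ has $1-a_1a_2$; these agree only when $(1-a_1)(1-a_2)=0$. What does hold is the fixed-data composition $(g_2)_{\mathbf{y}_{1:n}}\circ(g_1)_{\mathbf{y}_{1:n}}=(g_2g_1)_{\mathbf{y}_{1:n}}$, consistent with the fixed-data reading of the displacement. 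So, carried out faithfully, your plan would not verify the proposition as you have framed it: you must first fix which conditional invariance is in force (transformed conditioning data, for which the displacement is simply $[\mathbf{x}^*_{1:m}]^T\mathbf{b}$, versus conditioning data held fixed, for which the proposition's $P$-correction and the fixed-data composition law are the right ones) and then redo both the mean-matching and the composition check under that single convention; asserting that the stated $\Delta$ drops out of the transformed-data matching is incorrect.
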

\begin{proof}
The invariances follow directly from the standard linear transformation rule for multivariate normal random variables: $\mathbf{x} \sim \mathcal{N}(\mu, \Sigma) \Rightarrow A\mathbf{x} + \mathbf{b} \sim \mathcal{N}(A\mu + \mathbf{b}, A \Sigma A^T)$
\end{proof}

From proposition \ref{prop:right-haar} it follows that the right-invariant measure under the action of $G_{GP}$ is $1/\sigma_y$. According to Berger et. al.'s integrated likelihood for this prior (\cite{berger2001objective}, equation 3) the posterior is proper if in addition to the kernel matrix $[\mathbf{x}_{1:n}] K(\mathbf{x}_{1:n}, \mathbf{x}_{1:n})^{-1} [\mathbf{x}_{1:n}]^T$ is positive definite, which is true iff $[\mathbf{x}_{1:n}]$ has full rank. This then guarantees valid predictive procedures. It remains to show the orbit regularity of $\mathcal{Y}^n$. While $G_{GP}$ is an amenable Lie group (as a closed subgroup of the upper-triangular matrix group), its translation-scaling action is not proper. Therefore, Proposition \ref{prop:orb-reg-1} is once again our best starting point for proving orbit-regularity.

\begin{lemma}
Let $\mathbf{x}_1, \dots, \mathbf{x}_n \in \mathbb{R}^d$. If $\rank([\mathbf{x}_{1:n}]) = d$ and $n>d$ then the group actions acts freely on all $y_{1:n} \in \mathcal{Y}^n$ except the linear subspace spanned by the rows of $[\mathbf{x}_{1:n}]$.
\end{lemma}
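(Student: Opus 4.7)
The plan is to directly unpack the freeness condition and exploit the rank hypothesis. A group element $(a,\mathbf{b}) \in G_{GP}$ stabilizes $\mathbf{y} \in \mathbb{R}^n$ precisely when
$$a\mathbf{y} + [\mathbf{x}_{1:n}]^T \mathbf{b} = \mathbf{y}, \quad \text{i.e.,} \quad (a-1)\mathbf{y} = -[\mathbf{x}_{1:n}]^T \mathbf{b}.$$
So the stabilizer of $\mathbf{y}$ is determined by which $(a,\mathbf{b})$ satisfy this linear equation.

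First I would dispose of the case $a = 1$. The equation collapses to $[\mathbf{x}_{1:n}]^T \mathbf{b} = 0$. Since $[\mathbf{x}_{1:n}]$ is $d \times n$ with $\rank([\mathbf{x}_{1:n}]) = d$ (which requires $n \geq d$ and uses the hypothesis), its transpose $[\mathbf{x}_{1:n}]^T$ is $n \times d$ with trivial kernel. Hence $\mathbf{b} = 0$, and the stabilizing element is the identity $(1, \mathbf{0})$. This step contributes nothing to non-freeness, independent of $\mathbf{y}$.

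Next I would handle $a \neq 1$. Then the equation forces $\mathbf{y} = -\tfrac{1}{a-1} [\mathbf{x}_{1:n}]^T \mathbf{b}$, which lies in the column space of $[\mathbf{x}_{1:n}]^T$, i.e., the subspace $V$ spanned by the rows of $[\mathbf{x}_{1:n}]$. Contrapositively, if $\mathbf{y} \notin V$, no $a \neq 1$ can stabilize $\mathbf{y}$, and together with the previous case the stabilizer is trivial. This gives the forward direction: the action is free on $\mathcal{Y}^n \setminus V$.

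For completeness (and to justify that $V$ really is the exceptional locus and cannot be shrunk further) I would verify that every $\mathbf{y} \in V$ does have a nontrivial stabilizer: writing $\mathbf{y} = [\mathbf{x}_{1:n}]^T \mathbf{c}$, for any $a > 0$ the choice $\mathbf{b} = -(a-1)\mathbf{c}$ yields $a\mathbf{y} + [\mathbf{x}_{1:n}]^T \mathbf{b} = \mathbf{y}$, giving a one-parameter family of stabilizers. The only real obstacle is bookkeeping the dimensions of $[\mathbf{x}_{1:n}]$ ($d \times n$) and correctly identifying its transpose's column space with $V$; the hypothesis $n > d$ is needed so that $V$ is a proper subspace (and, jointly with $\rank = d$, so that the kernel argument in the $a=1$ case kills $\mathbf{b}$).
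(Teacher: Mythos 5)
Your proposal is correct and follows essentially the same route as the paper: solve the stabilizer equation $(a-1)\mathbf{y}_i + \mathbf{x}_i^T\mathbf{b} = 0$, use the rank hypothesis (rank-nullity / trivial kernel of $[\mathbf{x}_{1:n}]^T$) to force $\mathbf{b}=\mathbf{0}$ when $a=1$, and observe that $a\neq 1$ requires $\mathbf{y}$ to lie in the row space of $[\mathbf{x}_{1:n}]$. Your extra verification that every point of the row space has a nontrivial stabilizer is the "if" direction the paper also asserts, so nothing is missing.
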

\begin{proof}
We want to determine the elements $(a, \mathbf{b}) \in G_{GP}$ that stabilize some $\mathbf{y} \in \mathbb{R}^n$. For all $i=1, ..., n$ we would need
$$a y_i + \mathbf{x}_i^T \mathbf{b} = y_i \Leftrightarrow (a-1) y_i + \mathbf{x}_i^T \mathbf{b} = 0$$
For $a=1$ this simplifies to $[\mathbf{x}_{1:n}]^T \mathbf{b} = \mathbf{0}$. Since $\rank([\mathbf{x}_{1:n}]) = d$, it follows from the rank-nullity theorem, that the only solution will be $\mathbf{b} = \mathbf{0}$. For $a\neq 1$, $[\mathbf{x}_{1:n}]^T \mathbf{b} = (1-a)\mathbf{y}_{1:n}$ has a solution if and only if $\mathbf{y}_{1:n}$ lies in the space spanned by the rows of $[\mathbf{x}_{1:n}]$. 
\end{proof}

\begin{prop}
The Euclidean space $\mathbb{R}^n$ with measure $\mathcal{N}([\mathbf{x}_{1:n}]^T \boldsymbol{\beta}, \sigma_y^2 K(\mathbf{x}_{1:n}, \mathbf{x}_{1:n}))$ is orbit-regular under the action of $G_{GP}$ for all $n>d$ distinct feature-tuples $\mathbf{x}_1, \dots, \mathbf{x}_n \in \mathbb{R}^d$ with $\rank([\mathbf{x}_{1:n}]) = d$  and for all $\boldsymbol{\beta} \in \mathbb{R}^d$ and $\sigma_y \in \mathbb{R}_{>0}$.
\end{prop}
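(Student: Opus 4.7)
The plan is to verify the four hypotheses of Proposition \ref{prop:orb-reg-1}, with the previous lemma supplying the freeness statement (condition 3) almost verbatim. Conditions 1 and 2 are immediate: $\mathbb{R}^n$ is an oriented smooth manifold, and the Gaussian density with positive definite covariance $\sigma_y^2 K(\mathbf{x}_{1:n}, \mathbf{x}_{1:n})$ is strictly positive, so it defines a nowhere-vanishing volume form on $\mathbb{R}^n$.

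For condition 3, I would take $X_0$ to be the $d$-dimensional linear subspace $V = \operatorname{span}(\text{rows of }[\mathbf{x}_{1:n}]) \subset \mathbb{R}^n$ spanned by the rows of $[\mathbf{x}_{1:n}]$, equivalently the column space of $[\mathbf{x}_{1:n}]^T$. It is closed, and since $n > d$ it has Lebesgue measure zero, so the equivalent Gaussian measure also assigns it zero mass. One checks that $V$ is invariant under $G_{GP}$: for $\mathbf{y} = [\mathbf{x}_{1:n}]^T\mathbf{c}$ one has $a\mathbf{y} + [\mathbf{x}_{1:n}]^T\mathbf{b} = [\mathbf{x}_{1:n}]^T(a\mathbf{c}+\mathbf{b}) \in V$, so $V$ is a union of orbits. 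Freeness on the complement is exactly the preceding lemma. Smoothness of the action is clear from its explicit affine form.

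The main obstacle, and the step requiring actual construction, is condition 4: exhibiting a smooth manifold structure on $(\mathbb{R}^n \setminus V)/G_{GP}$ that turns the quotient map into a submersion. The plan is to build an explicit global slice. Decomposing $\mathbb{R}^n = V \oplus V^\perp$, write each $\mathbf{y} \in \mathbb{R}^n \setminus V$ as $\mathbf{y} = \mathbf{y}_V + \mathbf{y}_{V^\perp}$ with $\mathbf{y}_{V^\perp}\ne 0$; the action sends $\mathbf{y}_{V^\perp}\mapsto a\,\mathbf{y}_{V^\perp}$ (because $[\mathbf{x}_{1:n}]^T\mathbf{b}\in V$) while the $\mathbf{b}$-translation together with the scaling can map $\mathbf{y}_V$ to any vector in $V$ (using $\rank[\mathbf{x}_{1:n}] = d$). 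Hence each orbit meets the unit sphere $S := \{\mathbf{y}\in V^\perp : \|\mathbf{y}\|=1\}$ in exactly one point, and the map
$$\pi : \mathbb{R}^n \setminus V \longrightarrow S \cong S^{n-d-1}, \qquad \pi(\mathbf{y}) = \mathbf{y}_{V^\perp}/\|\mathbf{y}_{V^\perp}\|$$
is a smooth surjection whose fibres are the orbits. Its differential is visibly surjective onto $T_{\pi(\mathbf{y})}S$ (restrict variations of $\mathbf{y}$ to $V^\perp$ tangent to the sphere at $\pi(\mathbf{y})$), so $\pi$ is a submersion. Endowing $(\mathbb{R}^n\setminus V)/G_{GP}$ with the smooth structure pulled back from $S^{n-d-1}$ via $\pi$ thus makes it a manifold and the quotient map a submersion.

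With all four conditions of Proposition \ref{prop:orb-reg-1} verified, orbit-regularity follows. The only delicate point is really step 4, and the slice construction above reduces it to the observation that projection onto $V^\perp$ followed by normalisation is a smooth submersion; no further calculation is needed because the proposition then supplies the disintegration and the equivalence of the orbit measures to the left Haar measure automatically.
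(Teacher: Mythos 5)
Your proposal is correct and follows essentially the same route as the paper: verify the hypotheses of Proposition \ref{prop:orb-reg-1} with $X_0$ the row space of $[\mathbf{x}_{1:n}]$, freeness from the preceding lemma, and the quotient identified with the unit sphere in the orthogonal complement via orthogonal projection followed by normalisation, which is a submersion. Your explicit check that $X_0$ is a union of orbits and that each orbit meets the sphere exactly once is just a slightly more spelled-out version of the paper's argument.
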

\begin{proof}
Firstly, Euclidean space is an oriented smooth manifold on which the multivariate normal measure is given by a nowhere-vanishing volume form (the well-known PDF) as long as the covariance matrix is positive-definite. This is guaranteed by our distinctness assumption. The set $X_0$ is the $d$-dimensional linear subspace spanned by the rows of $[\mathbf{x}_{1:n}]$, so $\{[\mathbf{x}_{1:n}]^T\mathbf{v} \mid \mathbf{v} \in \mathbb{R}^d\}$. As a linear subspace it is closed in $\mathbb{R}^n$, and since $d<n$, it has measure $0$. By our previous lemma, $G_{GP}$ acts freely on $\mathbb{R}^n \setminus X_0$. Since the group action consists of scalar multiplication and addition of vectors that lie in $X_0$, $X_0$ is an orbit of $\mathbb{R}^n$ under the group action, specifically the orbit $G\mathbf{0}$. 

Let us now consider the subspace $W$ that is orthogonal to $X_0$. Every vector in $\mathbb{R}^n \setminus X_0$ can be written as the sum of a vector in $X_0$ and a non-zero vector in $W$. The group action will affect the non-zero $W$ component only by scalar multiplication, so each orbit takes the form $\{a \mathbf{w} \mid \mathbf{w} \in W, a \in \mathbb{R}_{>0}\}$. Therefore, the quotient space has the smooth structure of an $(n-d-1)$-sphere. The quotient map is described by orthogonal projection onto $W$ followed by the projection $\mathbf{w} \rightarrow \mathbf{w} / \|\mathbf{w}\|$ (which is a projection in polar coordinates). Since projections are submersions and the composition of submersions is a submersion, the quotient map is a submersion.
\end{proof}

Hence, we have another corollary of the main result:
\begin{coro} \label{coro:gp}
An optimal predictive procedure for a Gaussian process with linear mean $\mathbf{x}^T \boldsymbol{\beta}$ and RBF kernel with fixed lengthscale and scale $\sigma_y$ in the sense of (\ref{eq:maximin}) when having made $n > d$ observations $y_{1:n}$ at the distinct locations $\mathbf{x}_1, \dots, \mathbf{x}_n \in \mathbb{R}^d$ with $\rank([\mathbf{x}_{1:n}]) = d$ and $y_{1:n}$ not in the row space of $[\mathbf{x}_{1:n}]$, and wanting to make a prediction at the locations $\mathbf{x}^*_1, \dots, \mathbf{x}^*_m \in \mathbb{R}^d$ is given by the Bayesian posterior predictive with improper prior
$$\mu_R(\boldsymbol{\beta}, \sigma_y) \propto 1/\sigma_y d\boldsymbol{\beta}d\sigma_y$$
Additionally, this predictive procedure is invariant under $G_{GP}$.
\end{coro}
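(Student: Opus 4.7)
The plan is to apply the main theorem with the setup already fixed by the surrounding text: $\mathcal{Y} = \mathbb{R}$ with Lebesgue measure $\eta$, $\mathcal{Y}^n = \mathbb{R}^n$, and the group $G_{GP}$ acting as described in the preceding proposition. First I would check that $\alpha_\Theta$ is simply transitive on $\Theta = \mathbb{R}^d \times \mathbb{R}_{>0}$: given any source $(\boldsymbol{\beta}_1, \sigma_1)$ and target $(\boldsymbol{\beta}_2, \sigma_2)$, the unique group element $(a, \mathbf{b}) = (\sigma_2/\sigma_1,\, \boldsymbol{\beta}_2 - (\sigma_2/\sigma_1)\boldsymbol{\beta}_1)$ implements the transport, which simultaneously identifies $\Theta$ with $G_{GP}$. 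Measurability and continuity of all three actions are immediate from the affine-plus-kernel-correction formulas given in the preceding proposition, and the joint-Gaussian mutual absolute continuities $\mu(\mathbf{y}_{1:n} \mid \theta) \sim \eta^n$ and $\mu(\mathbf{y}^*_{1:m} \mid \mathbf{y}_{1:n}, \theta) \sim \eta^m$ follow from positive-definiteness of the kernel matrices, which is guaranteed by distinctness of the feature vectors.

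Next I would import orbit-regularity from the orbit-regularity proposition stated just above the corollary, invoking $n > d$ and $\rank([\mathbf{x}_{1:n}]) = d$ together with the assumption that $y_{1:n}$ does not lie in the row span of $[\mathbf{x}_{1:n}]$, which is exactly the exceptional orbit $X_0$ that has to be excluded. To identify the right-invariant Haar measure on $\Theta$, I would apply Proposition \ref{prop:right-haar}: $G_{GP}$ is the semidirect product $(\mathbb{R}^d, +) \rtimes (\mathbb{R}_{>0}, \cdot)$; the translation factor is unimodular with Haar measure $d\boldsymbol{\beta}$, and the one-parameter scaling factor has right Haar measure $d\sigma_y / \sigma_y$ (the one-dimensional case of the earlier corollary on positive-diagonal upper triangular matrices), so their product yields exactly the claimed prior $d\boldsymbol{\beta}\, d\sigma_y / \sigma_y$.

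To discharge the convergence hypothesis (a) of the main theorem, I would cite Berger, De Oliveira and Sans\'o for propriety of the posterior under $1/\sigma_y$: their integrated-likelihood formula requires $[\mathbf{x}_{1:n}] K(\mathbf{x}_{1:n}, \mathbf{x}_{1:n})^{-1} [\mathbf{x}_{1:n}]^T$ to be positive definite, which is equivalent to $[\mathbf{x}_{1:n}]$ having full row rank $d$ — precisely our hypothesis. At this point parts 1 and 2 of the main theorem already give minimax optimality among invariant predictive procedures. To lift this to optimality over \emph{all} predictive procedures via part 3, I would observe that $G_{GP}$ sits inside $T^+(d+1, \mathbb{R})$ as a closed subgroup (as the block matrix with $aI_d$ on the diagonal, $\mathbf{b}$ in the last column, and a $1$ in the corner), hence is solvable and therefore amenable by the same argument already used in the excerpt for $G_\mathcal{N}$, and the joint-Gaussian conditional density is continuous and strictly positive in its argument. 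The main obstacle I anticipate is bookkeeping rather than mathematical: the non-separable invariance structure — where the action on $\mathcal{Y}^m$ depends on $\mathbf{y}_{1:n}$, $[\mathbf{x}_{1:n}]$ and $[\mathbf{x}^*_{1:m}]$ through the kernel-prediction correction — is exactly the generalization that motivated loosening Komaki's original framework in the main theorem, so nothing beyond matching up the formulas with the theorem's statement should be required.
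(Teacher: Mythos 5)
Your proposal is correct and follows essentially the same route as the paper, which proves this corollary through the surrounding discussion: the right-invariant prior $d\boldsymbol{\beta}\,d\sigma_y/\sigma_y$ via Proposition \ref{prop:right-haar}, posterior propriety via Berger et al.\ under the full-rank condition on $[\mathbf{x}_{1:n}]$, orbit-regularity via the preceding lemma and proposition (with the row space of $[\mathbf{x}_{1:n}]$ as the excluded set $X_0$), and amenability of $G_{GP}$ as a closed subgroup of the upper-triangular group, all feeding into the main theorem. Your explicit verification of simple transitivity of $\alpha_\Theta$ is a small addition the paper leaves implicit, but it changes nothing substantive.
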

For Gaussian Processes, the right-invariant prior is unique (up to a scalar factor).

\begin{prop} \label{prop:gp-pred}
Under the same assumptions of Corllary \ref{coro:gp} and additionally assuming that $\mathbf{x}^*_1, \dots, \mathbf{x}^*_m$ are distinct from each other and $\mathbf{x}_1, ..., \mathbf{x}_n$, the optimal predictive procedure can be identified as the multivariate $t$-distribution
$$t_{n-d}\left(-A_{pp}^{-1} A_{po} \mathbf{y}_{1:n}, \frac{\mathbf{y}_{1:n}^T (A_{oo} - A_{op} A_{pp}^{-1} A_{po}) \mathbf{y}_{1:n}}{n - d} A_{pp}^{-1}\right)$$
where $A := K^{-1/2} (I - Z (Z^T Z)^{-1} Z^T) K^{-1/2}$ with $K := K([\mathbf{x}_{1:n}, \mathbf{x}^*_{1:m}], [\mathbf{x}_{1:n}, \mathbf{x}^*_{1:m}])$ and $Z := K^{-1/2} [\mathbf{x}_{1:n}, \mathbf{x}^*_{1:m}]^T$ is decomposed into submatrices as $A = \begin{pmatrix}
    A_{oo} & A_{op} \\
    A_{po} & A_{pp}
\end{pmatrix}$ where $A_{oo} \in \mathbb{R}^{n \times n}$, $A_{op} \in \mathbb{R}^{n \times m}$, $A_{po} \in \mathbb{R}^{m \times n}$, and $A_{pp} \in \mathbb{R}^{m \times m}$.  
\end{prop}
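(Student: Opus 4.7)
The plan is to compute the posterior predictive density in closed form by integrating out $\boldsymbol{\beta}$ and $\sigma_y$ in that order, and then to recognize the resulting shape in $\mathbf{y}^*_{1:m}$ as a multivariate $t$-density. Write $X = [\mathbf{x}_{1:n}, \mathbf{x}^*_{1:m}]$ and $\mathbf{z} = (\mathbf{y}_{1:n}, \mathbf{y}^*_{1:m}) \in \mathbb{R}^{n+m}$, so that the full joint likelihood (the distinctness hypothesis making $K$ positive definite) is
$$p(\mathbf{z} \mid \boldsymbol{\beta}, \sigma_y) \propto \sigma_y^{-(n+m)} \det(K)^{-1/2} \exp\!\left(-\tfrac{1}{2\sigma_y^2}(\mathbf{z} - X^T\boldsymbol{\beta})^T K^{-1}(\mathbf{z} - X^T\boldsymbol{\beta})\right).$$
Multiplying by the right-invariant prior $1/\sigma_y$, the first step is to whiten by setting $\tilde{\mathbf{z}} = K^{-1/2}\mathbf{z}$ and $Z = K^{-1/2} X^T$, so that the quadratic form becomes $\|\tilde{\mathbf{z}} - Z\boldsymbol{\beta}\|^2$.

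Next, I complete the square in $\boldsymbol{\beta}$, using that $\hat{\boldsymbol{\beta}} = (Z^T Z)^{-1} Z^T \tilde{\mathbf{z}}$ is the OLS solution (well-defined because $[\mathbf{x}_{1:n}]$ has rank $d$ and therefore so does $Z$). The residual sum of squares is exactly $\tilde{\mathbf{z}}^T (I - Z(Z^T Z)^{-1} Z^T) \tilde{\mathbf{z}} = \mathbf{z}^T A \mathbf{z}$, so the Gaussian integral over $\boldsymbol{\beta}$ contributes a factor proportional to $\sigma_y^{d} \det(Z^T Z)^{-1/2}$ and leaves
$$p(\mathbf{z}, \sigma_y) \propto \sigma_y^{-(n+m+1-d)} \exp\!\left(-\tfrac{\mathbf{z}^T A \mathbf{z}}{2\sigma_y^2}\right)$$
up to factors independent of $\mathbf{z}$ and $\sigma_y$. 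Substituting $u = \mathbf{z}^T A \mathbf{z}/(2\sigma_y^2)$ turns the $\sigma_y$-integral into a Gamma integral, yielding
$$p(\mathbf{z}) \propto (\mathbf{z}^T A \mathbf{z})^{-(n+m-d)/2}.$$
The marginal $p(\mathbf{y}_{1:n})$ is obtained by the same calculation with $K$, $X$, $Z$ replaced by their observed-only analogues, and crucially is a function of $\mathbf{y}_{1:n}$ alone, so it only contributes a normalizing constant when forming $p(\mathbf{y}^*_{1:m} \mid \mathbf{y}_{1:n})$.

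It then remains to expand $\mathbf{z}^T A \mathbf{z}$ using the stated block decomposition and complete the square in $\mathbf{y}^*_{1:m}$:
$$\mathbf{z}^T A \mathbf{z} = (\mathbf{y}^*_{1:m} + A_{pp}^{-1} A_{po}\mathbf{y}_{1:n})^T A_{pp} (\mathbf{y}^*_{1:m} + A_{pp}^{-1} A_{po}\mathbf{y}_{1:n}) + \mathbf{y}_{1:n}^T (A_{oo} - A_{op} A_{pp}^{-1} A_{po}) \mathbf{y}_{1:n},$$
for which $A_{pp}$ must be invertible (it is positive definite because $A$ is positive semidefinite and, under the distinctness and rank assumptions, its lower-right $m\times m$ block is nonsingular). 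Factoring out the $\mathbf{y}_{1:n}$-only term gives
$$p(\mathbf{y}^*_{1:m} \mid \mathbf{y}_{1:n}) \propto \left(1 + \frac{(\mathbf{y}^*_{1:m} + A_{pp}^{-1} A_{po}\mathbf{y}_{1:n})^T A_{pp} (\mathbf{y}^*_{1:m} + A_{pp}^{-1} A_{po}\mathbf{y}_{1:n})}{\mathbf{y}_{1:n}^T (A_{oo} - A_{op} A_{pp}^{-1} A_{po}) \mathbf{y}_{1:n}}\right)^{-(n+m-d)/2},$$
which is exactly the density of a multivariate $t$ with $\nu = n-d$ degrees of freedom (matching $-(\nu+m)/2 = -(n+m-d)/2$), location $-A_{pp}^{-1}A_{po}\mathbf{y}_{1:n}$, and scale matrix $\frac{\mathbf{y}_{1:n}^T(A_{oo} - A_{op}A_{pp}^{-1}A_{po})\mathbf{y}_{1:n}}{n-d}\,A_{pp}^{-1}$, as claimed.

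The main obstacle is the linear-algebra bookkeeping: verifying that whitening by $K^{-1/2}$ is legitimate (it requires $K$ to be positive definite, which the distinctness of all $n+m$ feature vectors supplies), checking that $Z$ has rank $d$ so the Gaussian integral over $\boldsymbol{\beta}$ converges, and confirming that $A_{pp}$ is invertible so that the quadratic form in $\mathbf{y}^*_{1:m}$ can actually be completed. Once these are in place, the rest of the argument is a routine pair of Gaussian/Gamma integrals followed by pattern-matching against the multivariate $t$ density.
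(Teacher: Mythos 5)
Your proposal is correct and follows essentially the same route as the paper's own proof: integrate out $\boldsymbol{\beta}$ by completing the square (equivalently, whitening by $K^{-1/2}$), integrate out $\sigma_y$ with the $1/\sigma_y$ prior via a Gamma integral to obtain a density proportional to $(\mathbf{z}^T A \mathbf{z})^{-(n+m-d)/2}$, then complete the square in $\mathbf{y}^*_{1:m}$ using the block decomposition of $A$ and pattern-match to the multivariate $t$. The only points the paper treats more explicitly are the positive definiteness of $A_{pp}$ (argued via $\ker(A)=\mathrm{col}(X)$ and the full rank of $[\mathbf{x}_{1:n}]$, rather than asserted) and the strict positivity of $\mathbf{y}_{1:n}^T(A_{oo}-A_{op}A_{pp}^{-1}A_{po})\mathbf{y}_{1:n}$, which relies on the corollary's assumption that $\mathbf{y}_{1:n}$ is not in the row space of $[\mathbf{x}_{1:n}]$ and is what makes both the $\sigma_y$ integral converge and the $t$ scale matrix positive definite.
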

\begin{proof}
See Appendix \ref{sec:gp-pred-proof}
\end{proof}

Since we have a multivariate normal likelihood where the mean and covariance are separately parametrized $\mathcal{N}(X\boldsymbol{\beta}, \sigma_y^2 K)$, we can determine the Fisher information matrix using the formula from (\cite{mardia1984maximum}, equation 2.5). This results in the Fisher information matrix $\mathcal{I}(\boldsymbol{\beta}, \sigma_y) = \operatorname{diag}\left(\frac{1}{\sigma_y^2} X^T K^{-1} X, \frac{2n}{\sigma_y^2}\right)$.
Hence, Jeffrey's prior is given by $\sqrt{\det(\mathcal{I}(\boldsymbol{\beta}, \sigma_y))} \propto \sigma_y^{-(p+1)}$. Analogously to the right-invariant prior, this leads to another multivariate $t$ predictive distribution:
$$t_n\left(-A_{pp}^{-1} A_{po} \mathbf{y}_{1:n}, \frac{\mathbf{y}_{1:n}^T (A_{oo} - A_{op} A_{pp}^{-1} A_{po}) \mathbf{y}_{1:n}}{n} A_{pp}^{-1}\right)$$
The independence-Jeffreys prior agrees with the right-invariant prior in this case.

For maximum-likelihood estimation, the estimate of $\boldsymbol{\beta}$ is just the standard GLS estimator 
$$\boldsymbol{\hat{\beta}} = \left(X^T K^{-1} X\right)^{-1} X^T K^{-1} \mathbf{y}, \qquad \hat{\sigma_y} = \sqrt{(\mathbf{y} - X \boldsymbol{\hat{\beta}})^T K^{-1} (\mathbf{y} - X \boldsymbol{\hat{\beta}}) / n}$$ 
For an unbiased estimate, we divide by $n-p$ instead.

\subsection{Numerical Results}
For our test we use a standard spatial setting where the features are an $x$ and $y$ coordinate as well as a constant feature $1$, to allow for any affine function of the coordinates as the mean. The $x_1$ and $x_2$ coordinates for observation and prediction are arbitrarily chosen from $(0, 1) \times (0, 1)$ and visualized in Figure \ref{fig:observed-points-gp}. The lengthscale of the RBF kernel is set to 1. We always predict at the same point and add a new observation location but keep the old ones as $n$ increases. We summarized the results of our Monte Carlo simulation in Table \ref{tab:pred-risk-gp}.


\begin{figure}[t]
\centering

\begin{minipage}[t]{0.4\textwidth}
    \centering
    \includegraphics[width=\linewidth]{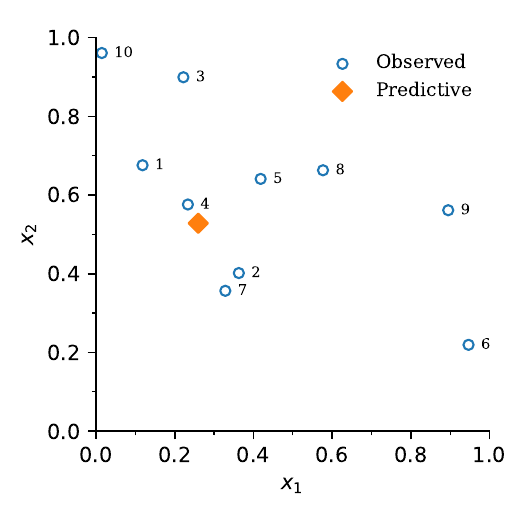}
    \captionof{figure}{Observation and prediction points}
    \label{fig:observed-points-gp}
\end{minipage}\hfill
\begin{minipage}[t]{0.5\textwidth} \vspace{-4cm}
    \centering
    \small
    \setlength{\tabcolsep}{4pt}
    \renewcommand{\arraystretch}{1.1}

    \begin{tabular}{@{}ccccc@{}}
    \toprule
    1 & 2 & 3 & 4 & 5 \\
    \midrule
    1.598 & 3.702 & 3.714 & 5.830 & 6.285 \\
    \bottomrule
    \end{tabular}

    \vspace{4pt}

    \begin{tabular}{@{}ccccc@{}}
    \toprule
    6 & 7 & 8 & 9 & 10 \\
    \midrule
    6.338 & 6.779 & 7.401 & 7.438 & 7.753 \\
    \bottomrule
    \end{tabular}

    \captionof{table}{Improvement in predictive performance knowing true parameters after observing at $n$ points relative to no observations}
    \label{tab:true-improvement-gp}
\end{minipage}

\end{figure}

\begin{table}
    \centering
    \begin{tabular}{|c|c|c|c|c|c|c|c|} \hline 
         n&  4&  5&  6&  7&  8&  9& 10\\ \hline 
         $q_\mathrm{R} = q_{\mathrm{IJ}}$&  0.594&  0.461&  0.3297&  0.459&  0.2483&  0.2176& 0.1320\\ \hline 
 $q_\mathrm{J}$& 1.815& 0.852& 0.522& 0.573& 0.324& 0.272&0.1725\\ \hline 
         $q_\mathrm{unb}$&  MC var. too high&  $36 \pm 22$&  1.38&  1.30&  0.510&  0.390& 0.213\\ \hline 
 $q_\mathrm{MLE}$& MC var. too high& $91 \pm 55$& 3.09& 2.48& 0.945& 0.676&0.373\\ \hline
    \end{tabular}
    \vspace{0.1cm}
    \caption{Predictive risk of invariant predictive procedures for GP (significant digits)}
    \label{tab:pred-risk-gp}
\end{table}

As expected, the right-invariant prior performs best. Due to the observations being dependent, it can happen that the log-likelihood we can assign knowing the true parameters improves more from additional observations than the predictive procedures. We observe this for the step from $n=6$ to $n=7$. Although the absolute performance of $q_R$ improves from the additional observation, the relative performance degrades. The relative performance of the plug-in prediction methods consistently improves but is always significantly worse than the objective Bayes methods, similar to what we have observed for the bivariate normal distribution. Plug-in prediction using the unbiased estimator outperforms the MLE again.

\begin{prop}
For our Gaussian Process model, the improvement in predictive performance at a given point from an additional observation knowing the true parameters is equal for any value of the true parameters.
\end{prop}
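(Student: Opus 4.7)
The plan is to rewrite the expected improvement as a conditional mutual information and then exploit the fact that the mutual information between jointly Gaussian random variables depends only on their correlation, which is invariant under translation of the mean and under multiplicative rescaling of the covariance.

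First, I would fix a prediction location $\mathbf{x}^*$ and an additional observation location $\mathbf{x}_{n+1}$, and express the expected improvement as
$$\Delta(\theta^*) = \mathbb{E}_{(\mathbf{y}_{1:n+1}, y^*)\sim p(\cdot\mid\theta^*)}\bigl[\log p(y^*\mid \mathbf{y}_{1:n+1},\theta^*) - \log p(y^*\mid \mathbf{y}_{1:n},\theta^*)\bigr].$$
Applying the chain rule of differential entropy to the joint conditional law of $(y^*,y_{n+1})$ given $(\mathbf{y}_{1:n},\theta^*)$ identifies this quantity with the conditional mutual information $I(y^*;y_{n+1}\mid \mathbf{y}_{1:n},\theta^*)$.

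Next I would invoke the Gaussian conditioning identity stated at the beginning of the section. Under the true parameters $\theta^*=(\boldsymbol{\beta}^*,\sigma_y^*)$, the joint distribution of $(y^*,y_{n+1})$ given $(\mathbf{y}_{1:n},\theta^*)$ is Gaussian; its mean depends on $\boldsymbol{\beta}^*$ and $\mathbf{y}_{1:n}$, but its $2\times 2$ covariance has the form $(\sigma_y^*)^2\widetilde{K}$, where $\widetilde{K}$ is the Schur complement of the extended kernel matrix at the two unobserved locations. The matrix $\widetilde{K}$ depends only on the fixed lengthscale and the feature locations $\mathbf{x}_{1:n},\mathbf{x}_{n+1},\mathbf{x}^*$, not on $(\boldsymbol{\beta}^*,\sigma_y^*)$.

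Finally, for a jointly Gaussian pair the mutual information equals $-\tfrac{1}{2}\log(1-\rho^2)$ with $\rho=\widetilde{K}_{12}/\sqrt{\widetilde{K}_{11}\widetilde{K}_{22}}$; the factor $(\sigma_y^*)^2$ cancels in this ratio and the mean plays no role in mutual information. Hence $\Delta(\theta^*)$ is determined entirely by the feature geometry. The main obstacle is not the algebra but the justification of the entropy chain rule: one has to check that all three conditional differential entropies are finite so that $H(y^*\mid\mathbf{y}_{1:n},\theta^*)-H(y^*\mid\mathbf{y}_{1:n+1},\theta^*)=I(y^*;y_{n+1}\mid\mathbf{y}_{1:n},\theta^*)$. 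This follows from $(\sigma_y^*)^2>0$ together with the distinctness assumption on the design points, which ensures that $\widetilde{K}$ is strictly positive definite and all three Gaussians are nondegenerate.
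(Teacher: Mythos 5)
Your proof is correct and follows essentially the same route the paper indicates: the paper's (unstated, one-line) argument is that the expected log predictive density under the true parameters is the negative entropy of a conditional normal whose covariance is $(\sigma_y^*)^2$ times a kernel-only Schur complement, so the entropy difference is parameter-free; your conditional-mutual-information formulation $I(y^*;y_{n+1}\mid \mathbf{y}_{1:n},\theta^*) = -\tfrac{1}{2}\log(1-\rho^2)$ is just a repackaging of that same entropy difference, with the nondegeneracy check correctly supplied by $\sigma_y^*>0$ and the distinct-locations assumption. In fact your write-up is more explicit than the paper, which only remarks that the result "follows from the entropy of the conditional normal distributions."
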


This proposition follows from the entropy of the conditional normal distributions and allows us to directly compare the predictive performance for any combination of an invariant predictive procedure and a number of observations $n$, even without knowledge of the true parameters. In Table \ref{tab:true-improvement-gp} we evaluate how the baseline of the predictive risk, the predictive performance knowing the true parameters, has improved after $n$ observations. This allows us to directly compare the performance of any combination of number of observations and predictive procedure.

\newpage
\bibliographystyle{unsrt}  
\bibliography{main}

\newpage
\appendix
\section{Liang's Subprobability Measure Assumption} \label{sec:liang-assumptions}
\begin{prop}\label{prop:measurable-action}
Under our assumption of a measurable group action $\alpha_{\mathcal{Y}}: G \times \mathcal{Y} \rightarrow \mathcal{Y}$ such that
$$\forall g \in G, \theta \in \Theta, A \in B(\mathcal{Y}): p(A \mid \theta) = p(gA \mid g\theta)$$
we can always define a group action of $G$ on the space $S$ of \emph{all} subprobability measures on $\mathcal{Y}^m$ such that $\forall g \in G, \theta \in \Theta, Q \in S: D_{KL}(p(\cdot \mid \theta) || Q) = D_{KL}(p(\cdot \mid g\theta) || gQ)$ and for any bounded measurable function $f$ on $\mathcal{Y}^m$ we have $\forall g \in G, Q \in S, A \in \sigma(\mathcal{Y}^*): \int f(A) d\mu(gQ) = \int f(gA) d\mu(Q)$
\end{prop}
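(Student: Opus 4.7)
The plan is to define the action by pushforward: set $gQ := g_{*}Q$, where $(g_{*}Q)(A) := Q(g^{-1}A)$ for every Borel $A \subseteq \mathcal{Y}^{m}$. Because joint measurability of $\alpha_{\mathcal{Y}}$ on the standard Borel space $\mathcal{Y}$ gives Borel measurability of each slice $y \mapsto gy$, and because this slice is bijective with inverse $y \mapsto g^{-1}y$ (also measurable by the same slicing argument), the preimages $g^{-1}A$ are Borel, so $g_{*}Q$ is a well-defined Borel measure. Since $(g_{*}Q)(\mathcal{Y}^{m}) = Q(g^{-1}\mathcal{Y}^{m}) = Q(\mathcal{Y}^{m}) \leq 1$, it is again a subprobability measure, so the action stays in $S$.

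Next I would verify the group axioms: $e_{*}Q = Q$ because $e^{-1}A = A$, and $(g_{1}g_{2})_{*}Q = g_{1*}(g_{2*}Q)$ because $(g_{1}g_{2})^{-1}A = g_{2}^{-1}g_{1}^{-1}A$. The integration identity is then the standard change-of-variables for a pushforward: for bounded measurable $f$, $\int f(y)\,d(gQ)(y) = \int f(gy)\,dQ(y)$, obtained first for indicator functions (it is exactly the definition of $g_{*}Q$), extended to simple, nonnegative, and bounded measurable $f$ by the usual monotone class / dominated-convergence argument.

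For the KL-divergence invariance, I first translate the hypothesis $p(A\mid\theta) = p(gA\mid g\theta)$ into the pushforward statement $p(\cdot\mid g\theta) = g_{*}p(\cdot\mid\theta)$ (set $B = gA$ and compare). Then both measures in $D_{KL}(p(\cdot\mid g\theta)\parallel gQ)$ are pushforwards of the corresponding measures under the same Borel bijection $g$. If $Q \ll p(\cdot\mid\theta)$ with Radon--Nikodym derivative $f$, then $g_{*}Q \ll g_{*}p(\cdot\mid\theta) = p(\cdot\mid g\theta)$ with Radon--Nikodym derivative $f \circ g^{-1}$ (verify by integrating over $A$ and changing variables); otherwise the mutual absence of absolute continuity is preserved by the bijection, and both KL values are $\infty$. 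Applying the definition and the change-of-variables identity,
\[
D_{KL}(p(\cdot\mid g\theta)\parallel gQ) = -\int \log(f\circ g^{-1})\,d(g_{*}p(\cdot\mid\theta)) = -\int \log f\,dp(\cdot\mid\theta) = D_{KL}(p(\cdot\mid\theta)\parallel Q),
\]
which is the desired equality.

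The only non-cosmetic subtlety is justifying that the slice $y \mapsto gy$ is Borel measurable with a Borel measurable inverse, because measurability of $\alpha_{\mathcal{Y}}$ is stated only jointly; but this is immediate by composing with the inclusion $\{g\}\times\mathcal{Y}\hookrightarrow G\times\mathcal{Y}$ and using the inverse group element $g^{-1}$. Everything else is routine once the pushforward definition is in place.
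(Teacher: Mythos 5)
Your proposal is correct and follows essentially the same route as the paper's proof: define the action on $S$ as the pushforward under the group action on $\mathcal{Y}^m$, observe that $p(\cdot \mid g\theta)$ is itself the pushforward of $p(\cdot \mid \theta)$, and conclude by invariance of the KL-divergence and the change-of-variables identity for pushforward measures. You merely spell out the Radon--Nikodym bookkeeping and the slice-measurability point that the paper leaves implicit.
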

\begin{proof}
Under these assumptions, we can define a group action on $S$ as the pushforward measure under the group action of $G$ on $\mathcal{Y}^*$. Since $\forall g \in G, \theta \in \Theta, A \in B(\mathcal{Y}^m): p(A \mid \theta) = p(gA \mid g\theta)$ we can also see $p(\cdot \mid g\theta)$ as the pushforward measure of the group action of $G$ on $\mathcal{Y}^*$. Since the KL-divergence is invariant to a change of variables, we have $\forall g \in G, \theta \in \Theta, Q \in S: D_{KL}(p(\cdot \mid \theta) || Q) = D_{KL}(p(\cdot \mid g\theta) || gQ)$. The change of variables theorem for pushforward measures also implies that $\forall g \in G, Q \in S, A \in \sigma(\mathcal{Y}^*): \int f(A) d\mu(gQ) = \int f(gA) d\mu(Q)$ if both sides are integrable.
\end{proof}

\section{Regular Value Proof} \label{sec:regular-value-proof}
We want to show that $(\mathbf{\bar{x}} = \mathbf{0}, \bar{\Sigma} = I)$ is a regular value of the map from $n>d$ observations $\mathbf{x}_1, ..., \mathbf{x}_n \in \mathbb{R}^d$ to their sample mean $\mathbf{\bar{x}}$ and sample covariance $\bar{\Sigma}$. We will start by showing the statement for $n = d+1$ points. $d+1$ points $\mathbf{x}_1, ..., \mathbf{x}_{d+1}$ in general position are also \emph{affinely independent} which can be characterized by the nonexistence of factors $a_1, ..., a_{d+1}$ that are not all zero such that 
$$\sum_{i=1}^{d+1} a_i \mathbf{x}_i = \mathbf{0} \text{ and } \sum_{i=1}^{d+1} a_i = 0$$
We first want to show the following lemma:
\begin{lemma}
If we have $d+1$ points $\mathbf{x}_1, ..., \mathbf{x}_{d+1} \in \mathbb{R}^d$ in general position, we can for any vector $\mathbf{y} \in \mathbb{R}^d$ find $a_1, ..., a_{d+1} \in \mathbb{R}$ such that
$$\sum_{i=1}^{d+1} a_i \mathbf{x}_i = \mathbf{y} \text{ and } \sum_{i=1}^{d+1} a_i = 0$$
\end{lemma}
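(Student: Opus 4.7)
My plan is to convert the affine-independence characterization of ``general position'' (for $d+1$ points, this is exactly the statement that no nontrivial $(a_1,\dots,a_{d+1})$ with $\sum a_i=0$ satisfies $\sum a_i \mathbf{x}_i=\mathbf{0}$) into a statement about a basis, and then solve the equation $\sum a_i \mathbf{x}_i=\mathbf{y}$ by linear algebra. Concretely, I would pick $\mathbf{x}_{d+1}$ as a reference point and look at the $d$ difference vectors $\mathbf{v}_i := \mathbf{x}_i - \mathbf{x}_{d+1}$ for $i=1,\dots,d$.

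The key claim is that $\mathbf{v}_1,\dots,\mathbf{v}_d$ are linearly independent in $\mathbb{R}^d$, and therefore form a basis. To see this, suppose $\sum_{i=1}^d b_i \mathbf{v}_i = \mathbf{0}$. Expanding gives $\sum_{i=1}^d b_i \mathbf{x}_i + \bigl(-\sum_{i=1}^d b_i\bigr)\mathbf{x}_{d+1} = \mathbf{0}$, and the coefficients $(b_1,\dots,b_d,-\sum b_i)$ sum to zero. Affine independence of $\mathbf{x}_1,\dots,\mathbf{x}_{d+1}$ then forces all $b_i=0$, so the $\mathbf{v}_i$ are linearly independent.

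With the basis in hand, for any $\mathbf{y}\in\mathbb{R}^d$ I can uniquely write $\mathbf{y}=\sum_{i=1}^d b_i \mathbf{v}_i$ for some $b_i\in\mathbb{R}$. I then set $a_i := b_i$ for $i=1,\dots,d$ and $a_{d+1}:=-\sum_{i=1}^d b_i$. A direct computation shows $\sum_{i=1}^{d+1}a_i\mathbf{x}_i = \sum_{i=1}^d b_i(\mathbf{x}_i-\mathbf{x}_{d+1}) = \mathbf{y}$ and $\sum_{i=1}^{d+1}a_i=0$, which is what we want.

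Honestly, this lemma is more of a bookkeeping step than a substantive obstacle; once one accepts that general position for $d+1$ points is affine independence, and recalls that affine independence is equivalent to the $d$ differences forming a basis, the proof is just writing this out. The only mildly delicate point is making sure the equivalence between the determinant-based general-position condition used earlier (vanishing of the augmented determinant with a row of $1$'s) and affine independence is clearly invoked, but this is the standard affine-geometry dictionary and can be stated in a single line.
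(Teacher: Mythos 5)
Your proof is correct, and it takes a somewhat different route from the paper's. The paper works with the linear map $T(a_1,\dots,a_{d+1})=\sum_i a_i\mathbf{x}_i$ on all of $\mathbb{R}^{d+1}$: it argues $\rank(T)=d$, uses rank--nullity to get a one-dimensional kernel spanned by some $\boldsymbol\lambda$ whose coordinate sum is nonzero (by affine independence), and then restricts $T$ to the zero-sum hyperplane $S$, concluding that $T_{|S}$ is injective and hence, by dimension count, surjective. You instead translate affine independence of $\mathbf{x}_1,\dots,\mathbf{x}_{d+1}$ into linear independence of the differences $\mathbf{v}_i=\mathbf{x}_i-\mathbf{x}_{d+1}$ and then write the solution down explicitly by expanding $\mathbf{y}$ in that basis and setting $a_{d+1}=-\sum_{i\le d} b_i$. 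Your version is constructive and arguably more elementary; it also sidesteps the paper's intermediate claim that any $d$ of the points form a basis of $\mathbb{R}^d$ (which, as literally stated, can fail, e.g.\ if one of the points is the origin -- the rank claim is still true, but via the affine span rather than that subset argument). The paper's kernel formulation yields the extra structural fact that the zero-sum relation space is exactly one-dimensional with nonvanishing coefficient sum, but nothing downstream requires it, and both arguments feed equally well into the corollary that shifts the coefficient sum from $0$ to an arbitrary $c$. The one point you flag yourself -- that the determinant criterion for general position used earlier in the paper is equivalent to affine independence for $d+1$ points -- is indeed the only dictionary step you need, and the paper itself invokes the same identification, so stating it in one line suffices.
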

\begin{proof}
For fixed $\mathbf{x}_1, ..., \mathbf{x}_{d+1} \in \mathbb{R}^d$ we consider the linear transformation $T: \mathbb{R}^{d+1} \rightarrow \mathbb{R}^d$ defined as:
$$T(a_1, ..., a_{d+1}) = \sum_{i=1}^{d+1} a_i \mathbf{x}_i$$
Since $\mathbf{x}_1, ..., \mathbf{x}_{d+1}$ are in general position, any subset of $d$ vectors forms a basis of $\mathbb{R}^d$, implying that $\rank(T) = d$. By the rank-nullity theorem, we can conclude that $\dim(\ker(T))=1$. Hence, there exists a unique up to a scalar factor nonzero vector $\boldsymbol \lambda \in \mathbb{R}^{d+1}$ such that
$$\sum_{i=1}^{d+1} \lambda_i \mathbf{x}_i = \mathbf{0}$$
Since $\mathbf{x}_1, ..., \mathbf{x}_{d+1} \in \mathbb{R}^d$ are in general position, they are affinely independent and we can conclude that $\sum_{i=i}^{d+1} \lambda_i \neq 0$.

Let us now define the linear subspace $S$ of $\mathbb{R}^{d+1}$:
$$S = \left\{(a_1, ..., a_{d+1}) \in \mathbb{R}^{d+1}: \sum_{i=1}^{d+1} a_i = 0\right\}$$
Since there is one linear constraint, the dimension of $S$ is $n$.

Now we see that $\ker(T_{|S})  = \ker(T) \cup S = \{\mathbf{0}\}$. Hence $T_{|S}$ is injective and due to the dimension equality also surjective, proving the lemma.
\end{proof}
\begin{coro}
If we have $d+1$ points $\mathbf{x}_1, ..., \mathbf{x}_{d+1} \in \mathbb{R}^d$ in general position with $\frac{1}{d+1}\sum_{i=1}^{d+1} \mathbf{x}_i = \mathbf{0}$, we can for any vector $\mathbf{y} \in \mathbb{R}^d$ and $c \in \mathbb{R}$ find $a_1, ..., a_{d+1} \in \mathbb{R}$ such that
$$\sum_{i=1}^{d+1} a_i \mathbf{x}_i = \mathbf{y} \text{ and } \sum_{i=1}^{d+1} a_i = c$$
\end{coro}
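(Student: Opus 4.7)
The plan is to reduce the corollary to the preceding lemma by a uniform shift of the coefficients, exploiting the extra hypothesis that the centroid of the points is at the origin.

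First, I would apply the lemma to the same collection $\mathbf{x}_1, \dots, \mathbf{x}_{d+1}$ (which remain in general position) and the target vector $\mathbf{y}$. This produces scalars $b_1, \dots, b_{d+1} \in \mathbb{R}$ with $\sum_{i=1}^{d+1} b_i \mathbf{x}_i = \mathbf{y}$ and $\sum_{i=1}^{d+1} b_i = 0$.

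Next, I would define $a_i := b_i + \tfrac{c}{d+1}$ for each $i = 1, \dots, d+1$. Summing gives $\sum_{i=1}^{d+1} a_i = \sum_{i=1}^{d+1} b_i + c = c$, as required. For the weighted sum,
$$\sum_{i=1}^{d+1} a_i \mathbf{x}_i = \sum_{i=1}^{d+1} b_i \mathbf{x}_i + \frac{c}{d+1} \sum_{i=1}^{d+1} \mathbf{x}_i = \mathbf{y} + \frac{c}{d+1} \cdot (d+1) \cdot \mathbf{0} = \mathbf{y},$$
where the centroid hypothesis $\tfrac{1}{d+1}\sum_i \mathbf{x}_i = \mathbf{0}$ is exactly what kills the shift contribution to the weighted sum.

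There is essentially no obstacle here: the construction is a one-line affine adjustment, and the centroid-at-origin assumption is precisely tailored so that the shift modifies the sum of the $a_i$ while leaving $\sum a_i \mathbf{x}_i$ invariant. The only thing to verify is that the lemma's hypotheses transfer, which is immediate since the points are unchanged. This corollary will then feed into the broader regular-value argument by allowing arbitrary infinitesimal perturbations of both $\bar{\mathbf{x}}$ and (through analogous reasoning for quadratic forms) $\bar{\Sigma}$.
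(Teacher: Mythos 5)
Your proposal is correct and follows exactly the same route as the paper's own proof: apply the preceding lemma to obtain $b_1,\dots,b_{d+1}$ with zero sum, then shift by $c/(d+1)$, using the centroid-at-origin hypothesis to keep $\sum_i a_i \mathbf{x}_i = \mathbf{y}$. Your write-up even spells out the verification that the paper leaves implicit, so there is nothing to correct.
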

\begin{proof}
According to the previous lemma we can find $b_1, ..., b_{d+1}$ such that
$$\sum_{i=1}^{d+1} b_i \mathbf{x}_i = \mathbf{y} \text{ and } \sum_{i=1}^{d+1} b_i = 0$$
Now we define $a_i = b_i + \frac{c}{d+1}$.
\end{proof}

\begin{prop} \label{prop:whitened-manifold}
If we have a set of $n > d$ points $\mathbf{x}_1, ..., \mathbf{x}_n \in \mathbb{R}^d$ in general position, and the function:
$$\mathbf{\bar{x}}=\frac{1}{n}\sum_{i=1}^{n}\mathbf{x}_i, \quad \bar{\Sigma} = {1 \over {n-1}}\sum_{i=1}^n (\mathbf{x}_i-\mathbf{\bar{x}}) (\mathbf{x}_i-\mathbf{\bar{x}})^T$$
Then $(\mathbf{\bar{x}} = \mathbf{0}, \bar{\Sigma} = I)$ is a regular value.
\end{prop}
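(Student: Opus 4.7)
The plan is to show that the differential $dF$ of the map $F:(\mathbf{x}_1,\ldots,\mathbf{x}_n) \mapsto (\bar{\mathbf{x}},\bar{\Sigma})$ is surjective onto $\mathbb{R}^d \oplus \mathrm{Sym}(d,\mathbb{R})$ at every point of $F^{-1}(\mathbf{0}, I)$, and to prove surjectivity by exhibiting an explicit right inverse. The key simplification is that at a preimage point the defining conditions collapse to two clean moment identities: $\sum_i \mathbf{x}_i = \mathbf{0}$ (from $\bar{\mathbf{x}} = \mathbf{0}$) and, once the centering in $\bar{\Sigma}$ is removed, $\sum_i \mathbf{x}_i \mathbf{x}_i^T = (n-1) I$ (from $\bar{\Sigma} = I$). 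Together these will trivialize the Jacobian.

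First I would linearize $F$ along a perturbation $\mathbf{x}_i \mapsto \mathbf{x}_i + \varepsilon \mathbf{v}_i$. The centering corrections in $\bar{\Sigma}$ produce terms proportional to $\bar{\mathbf{v}} \sum_i \mathbf{x}_i^T$ and $\sum_i \mathbf{x}_i \bar{\mathbf{v}}^T$, which vanish at the preimage point since $\sum_i \mathbf{x}_i = \mathbf{0}$. What remains is
\begin{equation*}
dF(\mathbf{v}_1,\ldots,\mathbf{v}_n) \;=\; \left(\frac{1}{n}\sum_{i=1}^n \mathbf{v}_i,\;\; \frac{1}{n-1}\sum_{i=1}^n \bigl[\mathbf{v}_i \mathbf{x}_i^T + \mathbf{x}_i \mathbf{v}_i^T\bigr]\right).
\end{equation*}

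Second, for an arbitrary target $(\mathbf{y}, S)$ with $S$ symmetric, I would set $\mathbf{v}_i := \mathbf{y} + \tfrac{1}{2} S \mathbf{x}_i$. The mean component becomes $\mathbf{y} + \tfrac{1}{2n} S \sum_i \mathbf{x}_i = \mathbf{y}$, and the covariance component becomes
\begin{equation*}
\frac{1}{n-1}\sum_{i=1}^n \bigl[\mathbf{y}\mathbf{x}_i^T + \mathbf{x}_i\mathbf{y}^T\bigr] \;+\; \frac{1}{2(n-1)}\bigl[S\textstyle\sum_i \mathbf{x}_i\mathbf{x}_i^T + \sum_i \mathbf{x}_i\mathbf{x}_i^T S\bigr] \;=\; 0 + \tfrac{1}{2}(S + S^T) \;=\; S,
\end{equation*}
where the $\mathbf{y}$-parts vanish by $\sum_i \mathbf{x}_i = \mathbf{0}$ and the second-moment parts collapse by $\sum_i \mathbf{x}_i\mathbf{x}_i^T = (n-1) I$ together with symmetry of $S$. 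Hence $dF$ is surjective at every preimage point, which is precisely the regular value condition.

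No serious obstacle is expected: the whole argument reduces to a one-line algebraic identity once the two moment identities are in hand. The only subtle point is that the covariance codomain is $\mathrm{Sym}(d,\mathbb{R})$ rather than all of $\mathbb{R}^{d\times d}$, which is exactly why the symmetrized ansatz $\mathbf{v}_i = \mathbf{y} + \tfrac{1}{2} S \mathbf{x}_i$ with the factor $\tfrac{1}{2}$ is the right choice; without symmetry one would overshoot to $S + S^T$. The general position hypothesis enters only implicitly, to ensure the preimage is non-empty and lies in the smooth open locus on which $F$ is being differentiated.
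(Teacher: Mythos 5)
Your proposal is correct, and it takes a genuinely different route from the paper. Both arguments start from the same linearization and both use $\sum_i \mathbf{x}_i = \mathbf{0}$ to dispose of the centering corrections, but they diverge on how surjectivity of the differential is established. The paper never touches the second-moment identity: it reads the target matrix column by column and reduces surjectivity to an auxiliary lemma (proved via rank--nullity and affine independence) saying that for $d+1$ points in general position with zero mean one can realize any vector $\mathbf{y}$ as $\sum_i a_i \mathbf{x}_i$ with a prescribed value of $\sum_i a_i$; the remaining perturbations are set to zero. You instead exploit the fact that on the fiber one also has $\sum_i \mathbf{x}_i\mathbf{x}_i^T = (n-1)I$, which yields the closed-form right inverse $\mathbf{v}_i = \mathbf{y} + \tfrac{1}{2}S\mathbf{x}_i$, with the factor $\tfrac12$ correctly accounting for the symmetrized codomain $\mathrm{Sym}(d,\mathbb{R})$. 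What your route buys: no auxiliary affine-independence lemma, no case split between $n=d+1$ and $n>d+1$, and in fact general position is not needed for surjectivity at all (it only delimits the open locus on which the construction of the quotient manifold takes place), so your argument is shorter and slightly stronger, and it sidesteps the delicacy in the paper's reduction that the selected $d+1$ points of a mean-zero configuration need not themselves have mean zero. What the paper's route buys: it makes no use of the specific value $\bar{\Sigma}=I$, so essentially the same computation shows regularity of $(\mathbf{0},\Sigma_0)$ for other covariance targets, whereas your explicit inverse is tailored to the fiber over $(\mathbf{0}, I)$ --- which is all that the application requires.
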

\begin{proof}
$(\mathbf{\bar{x}} = \mathbf{0}, \bar{\Sigma} = I)$ is a regular value iff every point in its preimage has a surjective differential. For $\mathbf{\bar{x}} = \mathbf{0}$ the differential is given by
$$d\mathbf{\bar{x}} = \frac{1}{n} \sum_{i=1}^n d\mathbf{x}_i, \quad d\bar{\Sigma} = \frac{1}{n-1} \sum_{i=1}^n \mathbf{x}_i d\mathbf{x}_i^T + d\mathbf{x}_i \mathbf{x}_i^T$$
Since we have $n > d$ points in general position, $\hat{\Sigma}$ is symmetric positive definite and the tangent space of symmetric positive definite matrices are the symmetric matrices $\mathrm{Sym}_n$. For any value of $(d\mathbf{\bar{x}} \in \mathbb{R}^d, d\bar{\Sigma} \in \mathrm{Sym_n})$ we now need to find $d\mathbf{x}_1, ..., d\mathbf{x}_{d+1}$ such that the equations hold. The equations simplify to:
$$\sum_{i=1}^n d\mathbf{x}_i = n d\mathbf{\bar{x}}, \quad \sum_{i=1}^n \mathbf{x}_i d\mathbf{x}_i^T = 2(n-1) d\bar{\Sigma}$$
If we look at the matrix $2(n-1) d\bar{\Sigma}$ column-wise, we see that we have a linear combination of $n$ basis vectors $\mathbf{x}_1, ..., \mathbf{x}_n$ that need to equal that column vector, and an additional constraint on the sum of the factors. Due to the previous corollary, for $n=d+1$ we can always choose $d\mathbf{x}_1, ..., d\mathbf{x}_{d+1}$ such that the equations hold.  For $n > d+1$ we can set $d\mathbf{x}_{d+2}, ..., d\mathbf{x}_{n}$ to $\mathbf{0}$. This proves the theorem.
\end{proof}

\section{Posterior Propriety} \label{sec:posterior-propriety}
\begin{thm}
The posterior of the $d$-dimensional multivariate normal distribution after $n>d$ observations with a right-invariant prior is almost surely proper.
\end{thm}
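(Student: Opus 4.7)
My plan is to verify directly that
$$\int_{T^+(d,\mathbb{R})} \int_{\mathbb{R}^d} p(\mathbf{y}_{1:n}\mid U,\boldsymbol\mu)\,\mu_R(U,\boldsymbol\mu)\,d\boldsymbol\mu\,dU$$
is finite for $\eta^n$-almost every $\mathbf{y}_{1:n}$ when $n>d$. First, completing the square in the Gaussian exponent and integrating out $\boldsymbol\mu$ gives a factor of $\det(U)$, so the expression reduces to a constant times
$$\int_{T^+(d,\mathbb{R})} \prod_{i=1}^d U_{ii}^{-(n-1)-i}\exp\!\left(-\tfrac{1}{2}\operatorname{tr}\bigl(S(UU^T)^{-1}\bigr)\right)dU,$$
where $S = \sum_i(\mathbf{y}_i-\bar{\mathbf{y}})(\mathbf{y}_i-\bar{\mathbf{y}})^T$. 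By the general-position argument already used in the orbit-regularity proof for $G_{\mathcal{N}}$, the points $\mathbf{y}_1,\dots,\mathbf{y}_n$ are almost surely affinely independent for $n>d$, so $S$ is almost surely positive definite.

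Next, I factor $S=TT^T$ with $T\in T^+(d,\mathbb{R})$ the unique upper-triangular "reverse" Cholesky factor (obtainable by conjugating the ordinary Cholesky of $JSJ$ by the reversal matrix $J$). Substituting $U=TZ$ with $Z\in T^+(d,\mathbb{R})$ gives the decisive simplification
$$\operatorname{tr}\bigl(S(UU^T)^{-1}\bigr) = \operatorname{tr}\bigl(TT^T\,T^{-T}(ZZ^T)^{-1}T^{-1}\bigr) = \operatorname{tr}\bigl((ZZ^T)^{-1}\bigr),$$
which decouples the data from $Z$. The Jacobian of $Z\mapsto TZ$ on $T^+(d,\mathbb{R})$ is block-diagonal across the columns of $Z$, the $j$-th block being the $j\times j$ leading principal submatrix of $T$; hence $dU = \prod_{i=1}^d T_{ii}^{d-i+1}\,dZ$. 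Since $U_{ii}=T_{ii}Z_{ii}$, all $T$-dependent factors separate and the integral reduces to a data-independent
$$J := \int_{T^+(d,\mathbb{R})} \prod_{i=1}^d Z_{ii}^{-(n-1)-i}\exp\!\left(-\tfrac{1}{2}\operatorname{tr}\bigl((ZZ^T)^{-1}\bigr)\right)dZ.$$

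Finally, I substitute $Y=Z^{-1}$, another self-map of $T^+(d,\mathbb{R})$. Then $(ZZ^T)^{-1}=Y^T Y$, so the exponent becomes $\tfrac{1}{2}\|Y\|_F^2$. Ordering the upper-triangular coordinates by length $j-i$ makes the differential $dY=-YdZY$ block-triangular with diagonal entry $-Y_{ii}Y_{jj}$ for coordinate $(i,j)$, so $|dZ| = \prod_i Y_{ii}^{-(d+1)}\,|dY|$ (checked by hand for $d=1,2$). Combining exponents, $J$ factorizes over coordinates:
$$J = \prod_{i<j}\int_{-\infty}^\infty e^{-Y_{ij}^2/2}\,dY_{ij}\;\cdot\;\prod_{i=1}^d \int_0^\infty Y_{ii}^{\,n+i-d-2}\,e^{-Y_{ii}^2/2}\,dY_{ii}.$$
The off-diagonal factors are each $\sqrt{2\pi}$; the diagonal factor for index $i$ is a Gamma-type integral that converges at $0$ precisely when $n+i-d-2>-1$, i.e.\ $n>d+1-i$. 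The binding constraint is $i=1$: $n>d$, exactly the assumption.

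The main obstacle is the bookkeeping for the two Jacobians on $T^+(d,\mathbb{R})$ (left multiplication and matrix inversion). Both are classical and checkable in low dimensions, but an off-by-one error in either exponent would shift the convergence threshold in $n$.
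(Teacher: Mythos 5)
Your proof is correct, and it takes a genuinely different route through the key technical step. You share the paper's opening move (integrating out $\boldsymbol\mu$ to get $\prod_i U_{ii}^{-(n-1)-i}\exp(-\tfrac12\operatorname{tr}(S(UU^T)^{-1}))$ with $S$ almost surely positive definite) and, like the paper, you eventually invert the triangular matrix; but where the paper keeps the data matrix in place, bounds $\operatorname{tr}(HSH^T)\ge\lambda_{\min}\|H\|_F^2$, and finishes with polar coordinates (a radial gamma integral plus a bounded integrand on a compact angular set), you first whiten by the reverse Cholesky factor $S=TT^T$ via $U=TZ$, which decouples the data exactly, and then the inversion $Y=Z^{-1}$ turns the exponent into $\tfrac12\|Y\|_F^2$ so the integral factorizes into one-dimensional Gaussian and gamma-type integrals. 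Your two Jacobian claims are right: left multiplication by $T$ on $T^+(d,\mathbb{R})$ acts columnwise through leading principal submatrices, giving $\prod_i T_{ii}^{d-i+1}$, and in the inversion each $Y_{ii}$ appears $(d-i+1)+i=d+1$ times in $\prod_{i\le j}Y_{ii}Y_{jj}$, giving $\prod_i Y_{ii}^{-(d+1)}$ (consistent with the Mathai formula the paper cites), so the diagonal exponent $n+i-d-2$ and the threshold $n>d+1-i$ are correct, with $i=1$ binding. What your route buys is an exact evaluation rather than an upper bound: the $Z$-integral is an explicit product of Gamma factors and $\sqrt{2\pi}$'s, the data enter only through the finite factor $\prod_i T_{ii}^{d-2i-n+2}$, and the argument shows the condition $n>d$ is sharp (the $i=1$ integral diverges at $0$ when $n\le d$). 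The paper's route, by contrast, needs only one change of variables and a crude eigenvalue bound, at the cost of not producing a closed form or sharpness.
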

\begin{proof}
Consider the posterior density:
$$p(U, \boldsymbol \mu) \propto \det (U)^{-n} \, \exp \left( -\frac{1}{2} \sum_{i=1}^n (\mathbf{y}_i - \boldsymbol\mu)^\mathrm{T} \left(U U^T\right)^{-1}(\mathbf{y}_i - \boldsymbol\mu) \right) \prod_{i=1}^d (U_{i,i})^{-i}$$
Let us start by integrating out $\mu$ and let us for now write $\Sigma = UU^T$ and ignore terms constant in $\mu$:
$$\int \exp \left( -\frac{1}{2} \sum_{i=1}^n (\mathbf{y}_i - \boldsymbol\mu)^\mathrm{T} \Sigma^{-1}(\mathbf{y}_i - \boldsymbol\mu) \right) d\boldsymbol{\mu}$$
Let us introduce $\mathbf{\bar{y}} = \frac{1}{n} \sum_{i=1}^n \mathbf{y}_i$ and replace $\mathbf{y}_i - \boldsymbol \mu = (\mathbf{y}_i - \mathbf{\bar{y}}) + (\mathbf{\bar{y}} - \boldsymbol \mu)$. Expanding, this gives
$$=\int \exp \left( -\frac{1}{2} \left(\sum_{i=1}^n (\mathbf{y}_i - \bar{\mathbf{y}})^\mathrm{T} \mathbf{\Sigma}^{-1}(\mathbf{y}_i - \bar{\mathbf{y}}) + n (\boldsymbol\mu - \bar{\mathbf{y}})^\mathrm{T} \mathbf{\Sigma}^{-1}(\boldsymbol\mu - \bar{\mathbf{y}})\right) \right) d\boldsymbol{\mu}$$
$$=\exp\left(-\frac{1}{2} \sum_{i=1}^n (\mathbf{y}_i - \bar{\mathbf{y}})^\mathrm{T} \mathbf{\Sigma}^{-1}(\mathbf{y}_i - \bar{\mathbf{y}})\right) \int \exp\left(-\frac{1}{2} (\boldsymbol\mu - \bar{\mathbf{y}})^\mathrm{T} (\mathbf{\Sigma}/n)^{-1}(\boldsymbol\mu - \bar{\mathbf{y}})\right)d\boldsymbol{\mu}$$
$$=\exp\left(-\frac{1}{2} \sum_{i=1}^n (\mathbf{y}_i - \bar{\mathbf{y}})^\mathrm{T} \mathbf{\Sigma}^{-1}(\mathbf{y}_i - \bar{\mathbf{y}})\right) (2\pi)^{n/2} \det(\Sigma)^{1/2}$$
Taking into account $\det (U)^{-n} $ and $\det(\Sigma)^{1/2}=\det(U)$, we now have:
$$\int p(U, \boldsymbol \mu) d\boldsymbol{\mu} \propto \exp\left(-\frac{1}{2} \sum_{i=1}^n (\mathbf{y}_i - \bar{\mathbf{y}})^\mathrm{T} \left(U U^T\right)^{-1}(\mathbf{y}_i - \bar{\mathbf{y}})\right) \prod_{i=1}^d (U_{i,i})^{-i-n+1}$$
We perform a change of variables $H=U^{-1}$. The Jacobian determinant (see e.g. \cite{Mathai2008}, Note 11.2.2) is $dU = |H|^{-(d+1)} dH$.
$$=\exp\left(-\frac{1}{2} \sum_{i=1}^n (\mathbf{y}_i - \bar{\mathbf{y}})^\mathrm{T} H^T H (\mathbf{y}_i - \bar{\mathbf{y}})\right) \prod_{i=1}^d (H_{i,i})^{i + n - d - 2}$$
$$=\exp\left(-\frac{1}{2} \mathbf{Tr}\left(H \left(\sum_{i=1}^n(\mathbf{y}_i - \bar{\mathbf{y}}) (\mathbf{y}_i - \bar{\mathbf{y}})^T \right) H^T\right)\right)\prod_{i=1}^d (H_{i,i})^{i + n - d - 2}$$
If $n>d$, the scaled sample covariance matrix $S = \sum_{i=1}^n(\mathbf{y}_i - \bar{\mathbf{y}}) (\mathbf{y}_i - \bar{\mathbf{y}})^T $ is positive definite almost surely. Now $HSH^T \succeq \lambda_{\textrm{min}} HH^T$ where $\lambda_{\textrm{min}}$ is the smallest eigenvalue of $S$. Hence, $\mathbf{Tr}(HSH^T) \geq \lambda_{\mathrm{min}} \|H\|_F^2$. Now we have
$$\leq \exp\left(-\frac{\lambda_{\textrm{min}}}{2} \|H\|_F^2\right)\prod_{i=1}^d (H_{i,i})^{i + n - d - 2}$$
Now, instead of integrating over $H$ element-wise, we preform another change of variables and integrate over the radius $r = \|H\|_F$ and angle $\Theta = H / \|H\|_F$ separately, $dH = r^{\frac{d(d+1)}{2} - 1} drd\Theta$.
$$= \exp\left(-\frac{\lambda_{\textrm{min}}}{2} r^2\right) r^{d(n-1)-1} \prod_{i=1}^d \Theta_{i,i}^{i + n - d - 2}$$
Since $d \geq 1$ and $n \geq 2$, we have $d(n-1)-1 \geq 0 > -1$. Hence,  we have a gamma-integral:
$$\int \exp\left(-\frac{\lambda_{\textrm{min}}}{2} r^2\right) r^{d(n-1)-1} dr = 2^{\frac{1}{2} d (n-1)-1} \lambda_{\textrm{min}}^{-\frac{1}{2} d (n-1)} \Gamma \left(\frac{1}{2} d (n-1)\right) =: c$$
Now we have the integral $c \int \prod_{i=1}^d \Theta_{i,i}^{i + n - d - 2} d\Theta$. Since $i+n-d-2 \geq 0$ the function is bounded on the compact closure of $\Theta$, where the diagonal elements may also be $0$. Hence, the integral is finite.
\end{proof}

\section{Closed-Form Predictive Procedure for Bivariate Normal Distribution} \label{sec:bivn-predictive}
We are considering the bivariate normal distribution parametrized as 
$$(u_i, v_i) \sim \mathcal{N}\left(\begin{pmatrix}
\mu_u \\
\mu_v
\end{pmatrix},
\begin{pmatrix}
a & b \\
0 & c
\end{pmatrix}
\begin{pmatrix}
a & b \\
0 & c
\end{pmatrix}^T\right)$$
The probability density function is
$$\frac{\exp \left(-\frac{\left(a^2+b^2\right) (\mu_v-v_i)^2/c^2-2 b (\mu_u-u_i) (\mu_v-v_i)/c+(\mu_u-u_i)^2}{2 a^2}\right)}{2 \pi  a c}$$
Now we of course have not only one observation but $n$ observations $\mathbf{x}_1 = (u_1, v_1)^T, ..., \mathbf{x}_n = (u_n, v_n)^T$. Nevertheless, the exponential family nature of the bivariate normal distribution allows us to maintain a finite number of sufficient statistics: $\mathbf{u} \cdot\mathbf{1}$, $\mathbf{v} \cdot\mathbf{1}$, $\mathbf{u} \cdot \mathbf{u}$, $\mathbf{v} \cdot\mathbf{v}$, and $\mathbf{u} \cdot\mathbf{v}$: 
$$\exp \left(\left(-\frac{\mu_v^2 \left(a^2+b^2\right)}{2 a^2 c^2}+\frac{b \mu_u \mu_v}{a^2 c}-\frac{\mu_u^2}{2 a^2}\right) n +\left(\frac{\mu_v \left(a^2+b^2\right)}{a^2 c^2}-\frac{b \mu_u}{a^2 c}\right)\mathbf{v} \cdot\mathbf{1}-\frac{\left(a^2+b^2\right)}{2 a^2 c^2} \mathbf{v} \cdot \mathbf{v}\right.$$
$$\left.+\left(\frac{\mu_u}{a^2}-\frac{b \mu_v}{a^2 c}\right)\mathbf{u} \cdot\mathbf{1}+\frac{b}{a^2 c}\mathbf{u} \cdot \mathbf{v}-\frac{1}{2 a^2}\mathbf{u} \cdot \mathbf{u}\right)/(2 \pi  a c)^n$$
To obtain a predictive density we will now proceed by integrating out the parameters $\mu_u$, $\mu_v$, $a$, $b$, and $c$ based on the right-invariant prior $a^{-1} c ^{-2}$. We are using Mathematica. Integrating out $\mu_u$ and $\mu_v$, we have the integrated (over the prior) likelihood
$$(2 \pi )^{1-n} (a c)^{-n} \frac{\exp \left(-\frac{\left(a^2+b^2\right) \left(n \mathbf{v}\cdot\mathbf{v} - (\mathbf{v}\cdot\mathbf{1})^2\right)+2 b c ((\mathbf{u}\cdot\mathbf{1}) (\mathbf{v}\cdot\mathbf{1}) - n \mathbf{u}\cdot\mathbf{v})+c^2 \left(n \mathbf{u}\cdot\mathbf{u} - (\mathbf{u}\cdot\mathbf{1})^2\right)}{2 a^2 c^2 n}\right)}{c n}$$
After further integrating out $b$, $a$, and $c$ we arrive at an integrated likelihood proportional to
$$\frac{\left(n (\mathbf{u}\cdot\mathbf{u}) (\mathbf{v}\cdot\mathbf{v})-n (\mathbf{u}\cdot\mathbf{v})^2-(\mathbf{u}\cdot\mathbf{1})^2 \mathbf{v}\cdot\mathbf{v}+2 (\mathbf{u}\cdot\mathbf{1}) (\mathbf{u}\cdot\mathbf{v}) (\mathbf{v}\cdot\mathbf{1})-\mathbf{u}\cdot\mathbf{u} (\mathbf{v}\cdot\mathbf{1})^2\right)^{-\frac{n-2}{2}}}{n \mathbf{v}\cdot\mathbf{v} -(\mathbf{v}\cdot\mathbf{1})^2}$$
Note that the numerator is a power of the determinant of the Gram matrix of $\mathbf{u}$, $\mathbf{v}$, and $\mathbf{1}_n$ (the all one vector in $\mathbb{R}^n$) and the denominator is the determinant of the Gram matrix of $\mathbf{v}$ and $\mathbf{1}_n$.
$$\frac{\det(G(\mathbf{u}, \mathbf{v}, \mathbf{1}_n))^{-\frac{n-2}{2}}}{\det(G(\mathbf{v}, \mathbf{1}_n))}$$
To get a next-sample predictive distribution over the next observation $\mathbf{x}^* = (u^*, v^*)^T$ given the $n$ observations $\mathbf{u} = (x_1, ..., x_n)^T$, $\mathbf{v} = (y_1, ..., y_n)^T$ we can now just update the sufficient statistics by $\mathbf{x}^*$:
\begin{equation}
p(u^*, v^* \mid \mathbf{u}, \mathbf{v}) \propto \frac{\det\left(G\left(\begin{pmatrix}\mathbf{u} \\ u^* \end{pmatrix}, \begin{pmatrix}\mathbf{v} \\ v^* \end{pmatrix}, \mathbf{1}_{n+1}\right)\right)^{-\frac{n-1}{2}}}{\det\left(G\left(\begin{pmatrix}\mathbf{v} \\ v^* \end{pmatrix}, \mathbf{1}_{n+1}\right)\right)}
\label{eq:integrated-likelihood}
\end{equation}
The main challenge is now determining the normalizing constant. We will first integrate out $u^*$ since the unnormalized density is the power of a quadratic function in $u^*$, allowing us to use the integral formula:
$$\int_{-\infty}^{\infty} (a x^2 + 2b x + c)^{p}\,dx 
\;=\;
a^{-p-1}\,\left(ac - b^2\right)^{\,p+\tfrac12}\,\sqrt{\pi}\,\frac{\Gamma\!\left(-p - \tfrac12\right)}{\Gamma\!\left(-p\right)}
$$
The formula holds and is positive for any real $a, b, c, p$ such that $p < -\frac{1}{2}$ and the quadratic function only takes positive values, so $a>0$ and $ac > b^2$.


After integrating out $u^*$ from (\ref{eq:integrated-likelihood}) we then have:
$$\sqrt{\pi}\,\frac{\Gamma\!\left(\frac{n-2}{2}\right)}{\Gamma\!\left(\frac{n-1}{2}\right)}\det(G(\mathbf{v}, \mathbf{1}_n))^{\frac{n-3}{2}} \det(G(\mathbf{u}, \mathbf{v}, \mathbf{1}_n))^{-\frac{n-2}{2}} \det\left(G\left(\begin{pmatrix}\mathbf{v} \\ v^* \end{pmatrix}, \mathbf{1}_{n+1}\right)\right)^{-\frac{n}{2}} $$


Applying the integration rule again to $v^*$, this leads to the overall normalizing constant:
$$\frac{2\pi}{n-2} \frac{n^{(n-2)/2}}{(n+1)^{(n-1)/2}} \frac{\det(G(\mathbf{v}, \mathbf{1}_n))^{-\frac{n-1}{2}}}{\sqrt{\det(G(\mathbf{u}, \mathbf{v}, \mathbf{1}_n))}}$$
Hence, the predictive procedure is given by:
$$
q(u^*, v^* \mid \mathbf{u}, \mathbf{v}) = \frac{n-2}{2\pi} \frac{(n+1)^{(n-1)/2}}{n^{(n-2)/2}} \frac{\det(G( \mathbf{v}, \mathbf{1}_n))}{\det(G(\mathbf{u}, \mathbf{v}, \mathbf{1}_n))^{-\frac{n-2}{2}}}
\frac{\det\left(G\left(\begin{pmatrix}\mathbf{u} \\ u^* \end{pmatrix}, \begin{pmatrix}\mathbf{v} \\ v^* \end{pmatrix}, \mathbf{1}_{n+1}\right)\right)^{-\frac{n-1}{2}}}{\det\left(G\left(\begin{pmatrix}\mathbf{v} \\ v^* \end{pmatrix}, \mathbf{1}_{n+1}\right)\right)}
$$

\section{Closed-Form Predictive Procedure for Gaussian Processes}\label{sec:gp-pred-proof}
Proof of Proposition \ref{prop:gp-pred}: We define $X := [\mathbf{x}_{1:n}, \mathbf{x}^*_{1:m}]^T$,  $\mathbf{y} := (y_1, ..., y_n, y^*_1, ..., y^*_m)^T$, $k := n + m$, and $K := K([\mathbf{x}_{1:n}, \mathbf{x}^*_{1:m}], [\mathbf{x}_{1:n}, \mathbf{x}^*_{1:m}])$. We will first integrate out $\boldsymbol{\beta}$:
$$p(\mathbf{y} \mid \boldsymbol{\beta}, \sigma_y) =\int_{\mathbb{R}^d} (2 \pi \sigma_y^2)^{-k/2} \det(K)^{-1/2} \exp\left(-\frac{1}{2 \sigma_y^2} (\mathbf{y} - X\boldsymbol{\beta})^T K^{-1} (\mathbf{y} - X \boldsymbol{\beta})\right) d\boldsymbol{\beta}$$
Let $M = X^T K^{-1} X$ and $\hat{\boldsymbol{\beta}} = M^{-1} X^T K^{-1}\mathbf{y}$. Then:
$$=\int_{\mathbb{R}^d} (2 \pi \sigma_y^2)^{-k/2} \det(K)^{-1/2} \exp\left(-\frac{1}{2 \sigma_y} \left((\boldsymbol{\beta} - \hat{\boldsymbol{\beta}})^T M (\boldsymbol{\beta} - \hat{\boldsymbol{\beta}}) + \mathbf{y}^T K^{-1} \mathbf{y} - \hat{\boldsymbol{\beta}}^T M \hat{\boldsymbol{\beta}}\right) \right) d\boldsymbol{\beta}$$
$$=\int_{\mathbb{R}^d} (2 \pi \sigma_y^2)^{(d-k)/2} \det(K)^{-1/2} \det(M)^{-1/2} \exp\left(-\frac{1}{2 \sigma_y} \left( \mathbf{y}^T K^{-1} \mathbf{y} - \hat{\boldsymbol{\beta}}^T M \hat{\boldsymbol{\beta}} \right) \right) d\boldsymbol{\beta}$$
$$= (2\pi \sigma_y^2)^{(d-k)/2} \det(K)^{-1/2} \det(X^T K^{-1} X)^{-1/2} \exp\left(-\frac{1}{2\sigma_y^2} \mathbf{y}^T \left[ K^{-1} - K^{-1} X(X^T K^{-1} X)^{-1} X^T K^{-1}\right] \mathbf{y}\right)$$
Let us write $A = K^{-1} - K^{-1} X(X^T K^{-1} X)^{-1} X^T K^{-1}$. Defining $Z := K^{-1/2} X$, $A$ can be seen as an orthogonal projection onto the orthogonal complement of $\mathrm{col}(Z)$, in a different basis given by $K^{-1/2}$: We have $A = K^{-1/2} (I - Z (Z^T Z)^{-1} Z^T) K^{-1/2}$. Sylvester's law of inertia confirms that $A$ is positive semi-definite. Further, we now know that $\ker(I - Z (Z^T Z)^{-1} Z^T) = \mathrm{col}(Z)$ and hence $\ker(A) = \mathrm{col}(X)$. 

Since we have assumed that $\mathbf{y}_{1:n}$ does not lie in the row space of $[\mathbf{x}_{1:n}]$, $\mathbf{y}$ does not lie in $\mathrm{col}(X)$, $\mathbf{y}^T A \mathbf{y} > 0$. Now we can integrate out $\sigma_y$ as well:
$$\int_{\mathbb{R}_{>0}} (2\pi \sigma_y^2)^{(d-k)/2} |K|^{-1/2} |X^T K^{-1} X|^{-1/2} \exp\left(-\frac{\mathbf{y}^T A \mathbf{y}}{2\sigma_y^2}\right) \frac{1}{\sigma_y} d\sigma_y$$
$$=\Gamma\left(\frac{k-d}{2}\right) \frac{\pi^{(d-k)/2}}{2} |K|^{-1/2} |X^T K^{-1} X|^{-1/2}  \left(\mathbf{y}^T A \mathbf{y}\right)^{(d-k)/2} \propto \left(\mathbf{y}^T A \mathbf{y}\right)^{(d-k)/2}$$

Remembering $\mathbf{y} := (y_1, ..., y_n, y^*_1, ..., y^*_m)^T$, we can then see that $y^*_1, ..., y^*_m$ follow a multivariate $t$-distribution. Let us now determine its parameters. We start by decomposing $A$ into submatrices as $A = \begin{pmatrix}
    A_{oo} & A_{op} \\
    A_{po} & A_{pp}
\end{pmatrix}$ where $A_{oo} \in \mathbb{R}^{n \times n}, A_{op} \in \mathbb{R}^{n \times m}, A_{po} \in \mathbb{R}^{m \times n}, A_{pp} \in \mathbb{R}^{m \times m}$. Then
$$= \left({\mathbf{y}^*_{1:m}}^T A_{pp} {\mathbf{y}^*_{1:m}} + 2 \mathbf{y}_{1:n}^T A_{op} {\mathbf{y}^*_{1:m}} + \mathbf{y}_{1:n}^T A_{oo} \mathbf{y}_{1:n}\right)^{(d-k)/2}$$
As a submatrix of a positive semi-definite matrix, $A_{pp}$ is positive semi-definite. For positive semi-definite matrices $M$, we have $\forall \mathbf{v}: \mathbf{v}^T M \mathbf{v} = 0 \Leftrightarrow M \mathbf{v} = \mathbf{0}$. Further, since $\left[\mathbf{x}_{1:n}\right]$ has full rank, the only vector in $\ker(A)$ that has the first $n$ entries 0 is the 0-vector. Hence, $A_{pp}$ is positive definite. The positive minimum of the quadratic function in ${\mathbf{y}^*_{1:m}}$ is given by $\mathbf{y}_{1:n}^T A_{oo} \mathbf{y}_{1:n} - \mathbf{y}_{1:n}^T A_{op} A_{pp}^{-1} A_{po} \mathbf{y}_{1:n} = \mathbf{y}_{1:n}^T (A_{oo} - A_{op} A_{pp}^{-1} A_{po}) \mathbf{y}_{1:n}$.

To match the standard parametrization of a multivariate $t$-distribution, we need to divide the quadratic form by this value so the minimum is normalized to 1. 
$$\propto \left(\frac{{\mathbf{y}^*_{1:m}}^T A_{pp} {\mathbf{y}^*_{1:m}} + 2 \mathbf{y}_{1:n}^T A_{op} {\mathbf{y}^*_{1:m}} + \mathbf{y}_{1:n}^T A_{oo} \mathbf{y}_{1:n}}{\mathbf{y}_{1:n}^T (A_{oo} - A_{op} A_{pp}^{-1} A_{po}) \mathbf{y}_{1:n}}\right)^{(d-k)/2}$$
Since the support is $\mathbb{R}^m$, this immediately gives us $k-d=\nu + m$. So $\nu = k - m - d = n - d$. Next, we get
$$\Sigma = \frac{\mathbf{y}_{1:n}^T (A_{oo} - A_{op} A_{pp}^{-1} A_{po}) \mathbf{y}_{1:n}}{n - d} A_{pp}^{-1}, \qquad \mu = -A_{pp}^{-1} A_{po} \mathbf{y}_{1:n}$$
\end{document}